\documentclass[11pt]{amsart}


\setlength{\oddsidemargin}{1.0mm}
\setlength{\evensidemargin}{1.0mm}

\usepackage{epsfig}
\usepackage{graphicx}
\usepackage{amscd}
\usepackage{amsmath}
\usepackage{amsfonts}
\usepackage{mathrsfs}
\usepackage{amssymb}
\usepackage{verbatim}
\usepackage{comment}
\usepackage{color}
\usepackage{hyperref}

\textwidth=36cc \baselineskip=16pt
\newtheorem{theorem}{Theorem}[section]
\theoremstyle{plain}

\newtheorem{corollary}[theorem]{Corollary}

\newtheorem{defi}[theorem]{Definition}
\newtheorem{example}[theorem]{Example}

\newtheorem{lemma}[theorem]{Lemma}

\newtheorem{prop}[theorem]{Proposition}
\newtheorem{remark}[theorem]{Remark}

\numberwithin{equation}{section}

\def\what{\widehat}

\def\Xxi{{\mathfrak X}_\zeta}
\def\Xx{{\mathfrak X}}

\def\Onne{{1\!\!1}}
\def\One{{\bf 1}}

\def\dist{{\rm dist}}
\def\supp{{\rm supp}}

\def\wtil{\widetilde}
\def\half{\frac{1}{2}}

\newcommand{\lam}{\lambda}
\def\Lam{\Lambda}
\newcommand{\gam}{\gamma}
\newcommand{\om}{\omega}

\newcommand{\Sig}{\Sigma}
\def\Sigb{{\mathbf \Sig}}

\newcommand{\sig}{\sigma}

\def\bbe{{\bf e}}
\newcommand{\R}{{\mathbb R}}
\newcommand{\Q}{{\mathbb Q}}
\newcommand{\Z}{{\mathbb Z}}
\newcommand{\C}{{\mathbb C}}

\def\N{{\mathbb N}}

\newcommand{\Nat}{{\mathbb N}}

\def\wt{\widetilde}
\def\A{{\mathcal A}}

\def\Sk{{\mathscr S}}

\def\Pk{{\mathcal P}}

\def\Sf{{\sf S}}
\def\T{{\mathbb T}}

\def\bxi{{\mathbf \xi}}

\def\bz{{\mathbf z}}
\def\one{\vec{1}}
\def\be{\begin{equation}}
\def\ee{\end{equation}}
\newcommand{\Ek}{{\mathcal E}}

\newcommand{\eps}{{\varepsilon}}
\newcommand{\es}{\emptyset}

\def\ov{\overline}
\def\und{\underline}

\def\Span{{\rm Span}}
\def\Card{{\rm card}}

\def\Cc{{\mathscr M}}

\def\Mc{{\mathscr M}}

\def\a{a}

\def\nula{\nu_\lam}

\def\ve1{\vec{1}}

\def\M{{\mathbf M}}

\def\Ak{{\mathcal A}}
\def\Pbi{{\mathbf \Pi}}

\begin{document}

\author{Alexander I. Bufetov} 
\address{Alexander I. Bufetov\\ 
Aix-Marseille Universit{\'e}, CNRS, Centrale Marseille, I2M, UMR 7373\\
39 rue F. Joliot Curie Marseille France }
\address{Steklov  Mathematical Institute of RAS, Moscow}
\address{Institute for Information Transmission Problems, Moscow}
\email{alexander.bufetov@univ-amu.fr, bufetov@mi.ras.ru}
\author{Boris Solomyak }
\address{Boris Solomyak\\ Department of Mathematics,
Bar-Ilan University, Ramat-Gan, Israel}
\email{bsolom3@gmail.com}

\title{Self-similarity and spectral theory: on the spectrum of substitutions}

\begin{abstract}
In this  survey of the spectral properties of substitution dynamical systems we consider primitive aperiodic substitutions and associated dynamical systems: $\Z$-actions and $\R$-actions, the latter viewed as tiling flows. Our focus is on the continuous part of the spectrum.  For $\Z$-actions the maximal spectral type can be represented in terms of matrix
Riesz products, whereas for tiling flows,  the local dimension of the spectral measure is governed by the {\em spectral cocycle}. We give references to complete proofs and emphasize ideas and connections.
\end{abstract}

\maketitle

\pdfbookmark{Dedication}{dedication}
\thispagestyle{empty}
\begin{center}
  \Large \emph{Dedicated to Nikolai Kapitonovich Nikolski}
\end{center}

\section{Introduction}

Substitutions provide a rich class of examples of dynamical systems that are intermediate between periodic and random. The study of substitution sequences and associated structures has connections with
ergodic theory and harmonic analysis, number theory, especially Diophantine approximation,  theoretical computer science and mathematical physics, 
particularly in the study of quasicrystals. 
Substitutions have zero entropy and at most linear complexity; at the same time, the spectral properties of substitution systems are subtle and far from being {
 completely} understood.

\thispagestyle{empty}

\subsection{Substitutions: classical examples}
We start by recalling the main definitions and a few key examples. 
Let $\A$ be a finite alphabet and $\A^+$ the set of nonempty words with letters in $\A$. A {\em substitution}  is a map $\zeta:\,\A\to \A^+$, extended to 
 $\A^+$ and $\A^{\N}$ by concatenation. In order to explain how this works, consider
some of the best-known substitutions.

\begin{example} \label{ex1} {\em
 {\em Thue-Morse substitution:} $\zeta_{_{\rm TM}}(0)=01,\ \zeta_{_{\rm TM}}(1)=10$.
The first steps of the iteration are as follows:
$$0 \to 01\to 0110 \to 01101001 \to 01101001 1001 0110 \to \ldots $$
Notice that $\zeta^n_{_{\rm TM}}(0)$ starts with $\zeta^{n-1}_{_{\rm TM}}(0)$ for $n\ge 1$; thus
we obtain a well-defined limiting infinite sequence $u = \lim_{n\to \infty} \zeta_{_{\rm TM}}^n(0)$, which is a fixed point of the substitution: $\zeta_{_{\rm TM}}(u) = u=(u_n)_{n\ge 0}$.
It is called the Thue-Morse sequence or Prouhet-Thue-Morse sequence. There are many alternative definitions of $(u_n)$; for instance, 
$u_n$ is zero if and only if there are even number of 1's in the binary expansion of $n$.
For the rich history, the reader is referred to \cite{AlSh}. 
Here we mention a few highlights.

\begin{itemize}
\item The sequence $u$ implicitly appears in the 1851
work of Prouhet \cite{prouhet}  on a problem formulated in the correspondence between Leonhard Euler and Christian Goldbach and asking 
{for which $N$ and $k$ there is a partition of $\{0,\ldots,N-1\}$ into two disjoint subsets $S_0$ and $S_1$ such that $\sum_{j\in S_0} j^n = \sum_{j\in S_1} j^n$ for $n=0,\ldots,k$. 
Prouhet showed that
taking $N= 2^{k+1}$, with $S_0$ and $S_1$ being the places of 0's and 1's respectively in $\zeta_{_{\rm TM}}^{k+1}(0)$, yields a solution.}
  The general Prouhet-Tarry-Escott problem, as it is now called, is still open.

\item Thue (1906) \cite{Thue} introduced the sequence $u$ and showed that it is {\em cube-free} in the sense
that if $B$ is an arbitrary finite word, then the pattern $BBB$ does not appear in $u$.
Indeed, Thue established an even stronger claim: there is no word $B$ such that $BBb$ occurs in $u$, where
$b$ is the 1st letter of $B$.
Thue also noted that no sequence on 2 symbols is square-free, but on the 3-symbol alphabet $\{-1,0,1\}$ an
example of a square-free sequence is $v_n = u_{n+1}-u_n$. Thue introduces his studies by the words:
``For the development of logical sciences it will be important, without consideration for possible applications, to find large domains for speculation about difficult problems''
(translation of J. Berstel).

\item Morse (1921) \cite{Morse}  rediscovered the sequence in order to give an explicit construction of  a recurrent non-periodic geodesic on a surface of constant negative curvature. Morse notes also that the closure of the geodesic is not dense in the surface. Observe that, excluding degenerate cases,  any substitution would work for the same purpose.
The work of Morse was one of the first applications in geometry of {\em symbolic dynamics}: geodesics are coded  by symbolic sequences and the geometrical properties of the geodesics are derived from the combinatorial properties of the symbolic sequences.  

\item The study of {\em dynamical systems} acting on spaces of symbolic sequences, with substitutions providing a prominent example, seems to have originated in the work of Morse and Hedlund (1938) \cite{HM38}.

\item In subsequent work, Morse and Hedlund  (1944) \cite{HM44} 
used the Thue-Morse sequence this to construct an ``unending chess game'', if slightly modified rules for the draw are used.

\end{itemize}
}
\end{example}

\begin{example} \label{ex2} {\em
{\em Fibonacci substitution:} $\zeta_{_{\rm F}}(0) = 01,\ \zeta_{_{\rm F}}(1) = 0$. Iterating we obtain
$$0 \to 01 \to 010 \to 01001 \to \ldots \to u = 0100101001001010010100100\ldots,$$
the {\em Fibonacci substitution sequence}. Note that
$|\zeta_{_{\rm F}}^n(0)| = F_n$ are the Fibonacci numbers.}
\end{example}

\begin{itemize}
\item The Fibonacci substitution sequence is representative also of another important class in symbolic dynamics, namely, {\em Sturmian sequences}. They are characterized by the property of being non-periodic sequences of lowest possible complexity: the number of words of length $n$ is equal to $n+1$ for all $n$, whereas if there exists $n$ such  that the number of words of length $n$ is at most $n$, then the sequence is eventually periodic.

\item Sturmian sequences and the associated dynamical systems were 
investigated by Morse and Hedlund  \cite{HM40}, who proved, in particular, that they can be alternatively characterized as symbolic trajectories of an irrational rotation of the circle. The Fibonacci sequence arises from the coding of the rotation by the golden ratio: $x\mapsto x + \frac{\sqrt{5}-1}{2}$ mod 1.

\item It is known that a Sturmian sequence arises from a substitution if and only if the angle of rotation is a quadratic irrational; this is related to a theorem of
Lagrange, saying that a real number has a periodic continued fraction if and only if it is a quadratic irrational.
\end{itemize}

\begin{example} {\em 
{\em Rudin-Shapiro  substitution:} $\Ak = \{0,1,\ov{0},\ov{1}\}$.
$$
\zeta_{_{\rm RS}}:\ 0\mapsto 01,\ \ 1\mapsto 0\ov{1},\ \ \ov{0} \mapsto \ov{0}\,\ov{1},\ \ \ov{1}\mapsto\ov{ 0} 1
$$
The Rudin-Shapiro sequence is $(x_k) = \phi(u_k)$, where $u_k$ is the $k$-th letter of $u = \lim_{n\to \infty} \zeta_{_{\rm RS}}^n(0)$ and 
$\phi: 0, 1\mapsto +1,\ \ov{0},\ov{1} \mapsto -1$;
the first terms of the sequence are (keeping only the signs):
$
+++-++-++++---+-.
$
This sequence was introduced by Shapiro \cite{Shapiro} in his Thesis; its remarkable property is that
\be \label{ex-RS}
\forall\, N\ge 0,\ \ N^{1/2} = \Bigl\|\sum_{n< N} x_n e^{2\pi i n\om} \Bigr\|_2 \le \Bigl\|\sum_{n< N} x_n e^{2\pi i n\om} \Bigr\|_\infty \le CN^{1/2},
\ee
where the $L^2$, $L^\infty$ norms are on $[0,1]$ and one can take $C=2 + \sqrt{2}$. 
 J.-P. Allouche \cite{Allouche} pointed out that the sequence implicitly, and independently from Shapiro, appears in the work of Golay 
 \cite{Golay}; 
see also \cite{BriMor}. Rudin \cite{Rudin} published the proof of \eqref{ex-RS}, but he acknowledged Shapiro's priority; in fact, he was on the thesis committee of Shapiro!}
\end{example}

The substitution $\zeta$ is said to be of {\em constant length} $q$ if $|\zeta(a)|=q$ for all $a\in \A$, otherwise, it is of {\em non-constant length}. Thus, Thue-Morse and Rudin-Shapiro are constant
length substitutions, whereas Fibonacci is of non-constant length.

\subsection{Substitution dynamical systems and generalizations: historical remarks.} As already mentioned, symbolic dynamical systems associated with substitutions were first studied by Morse and Hedlund \cite{HM38} (see the next section for definitions). These are  examples of {\em discrete  topological dynamical systems}, that is, continuous maps on a compact metric space. When equipped with an invariant measure (which is unique under very general assumptions), we obtain
a measure-preserving system, and then the study of {\em spectral properties} of the associated unitary operator (the Koopman operator, see the next section for definition) becomes natural, as an important isomorphism invariant.

In some cases, the substitution system can be shown to arise from a coding of some other system of known structure: this usually provides complete spectral information. For instance, the Fibonacci substitution dynamical system is measurably (and even almost-topologically --- in fact, outside a countable
set of ``singularities'') isomorphic to an irrational rotation, hence has purely discrete spectrum. A large class of constant-$q$ length substitutions was shown
to provide a coding for the $q$-odometer, the translation by 1 map on the group of $q$-adic integers, hence again, has a purely discrete spectrum.
As will be discussed in the next section, deciding when the spectrum is discrete is solved in the constant length case, but is an important open problem
otherwise.
It is more difficult to find a model, or to identify the spectral properties, when the spectrum has a continuous component; the research in this direction is
on-going, and we will discuss some recent results in this paper.

Systematic investigation of topological substitution dynamical systems was started by Gottschalk (1963) \cite{Gott63} and continued in the 1960's-70's by the ``Rennes research group'', which included M. Keane, M. Dekking, and P. Michel, and also by E. Coven,  J. Martin, and T. Kamae, who worked both in the topological and in the measurable category. 
{A representative, but incomplete list of references from this period is \cite{keane,Kamae,CovenKeane,Martin1,Martin2,Michel,Michel2,Dek1,DK}.}

In the 1970's-80's, pioneering work was done by G\'erard Rauzy in Marseille. As had earlier been the case with Prouhet,  Rauzy's work had number-theoretic questions at the source. An extremely fruitful idea of ``inducing'', or
``renormalization'', developed in \cite{Rauzy1} was to extend the continued fraction algorithm to the setting of interval exchange transformations (IET's).
As mentioned above, the Fibonacci substitution dynamical systems provides a coding of the circle rotation by the golden ratio. For a general irrational
rotation, the regular continued fraction provides a {\em sequence of substitutions}, which generates the associated Sturmian symbolic system.
An exchange of $m$ intervals is a piecewise isometry, determined by a permutation of $\{1,\ldots,m\}$ and
a vector $(\lam_1,\ldots,\lam_m)$, with $m\ge 2$ and all $\lam_j$ positive. The IET ``chops up'' the line segment $[0,\sum_{j=1}^m \lam_j)$ into subintervals of length $\lam_j$, which are then reshuffled according to the permutation. Thus, a 2-interval exchange is isomorphic to a circle rotation.
A meeting with G. Rauzy in Rennes in 1977 (see \cite{FMS19}) inspired the fundamental work of William Veech on interval exchanges and the Teichm\"uller flow \cite{veech}. 
 The Rauzy-Veech formalism codes an interval exchange transformation by a sequence of substitutions; 
 the substituting morphism is no longer fixed but evolves over time; this evolution  is itself described by a stationary process --- an invariant measure for the Rauzy-Veech renormalization dynamical system. 
 
Following S. Ito \cite{Ito}, A. M. Vershik \cite{Vershik} introduced in 1982 the notion of {\em adic transformation} on an {\em ordered Bratteli diagram}, or, in Vershik's terminology, a Markov compactum. This construction admits various degrees of generality: if sufficiently general Markov compacta are used, then any ergodic measure-preserving transformation can be so encoded. For a survey and developments of the Vershik  formalism for maps see e.g., \cite{VerLiv,DHS}.
If two-sided Markov compacta are used, then one obtains an adic coding for  measure-preserving flows: for example, translation flows on flat surfaces are obtained as  bi-infinite sequences of substitutions governed by a stationary process, which, in turn, can be identified with a probability measure, invariant and ergodic under the action of the Teichm\"uller flow,  on a connected component of a stratum of the moduli space of abelian differentials.  This formalism is used in \cite{Buf-umn,Bufetov1} in  studying the deviation of ergodic averages for translation flows.

Ya. G. \ Sinai asked [personal communication] about the H\"older regularity of the spectrum of interval exchange transformations. In \cite{BuSo14,BuSo18,BuSo20,BuSo21},  H\"older regularity is established for almost every  translation flow on a flat surface of genus $\ge 2$. 
Forni \cite{Forni},  motivated by \cite{BuSo18,BuSo20}, which had established the result in the case of genus 2, obtained the result for an arbitrary genus $\ge 2$ earlier than us, by
a different, albeit related, technique. Very recently the H\"older regularity of the spectrum
 for almost every non-rotation interval exchange transformation was obtained in \cite{AFS}. 
 Observe that while in this case the results in discrete and continuous time are similar, 
  there is no a priori relation between the spectrum  of a transformation and a suspension flow over it.

In this survey, we restrict ourselves to  substitution systems and suspension flows  over them,  
but many results can be extended to the  $S$-adic case.

\subsection{Tilings, quasicrystals, and diffraction spectrum} Rather unexpectedly, substitutions became popular in mathematical physics, in the study of {\em quasicrystals}.
The fascinating story of their discovery by Dan Schechtman in 1982, who got the Nobel Prize in Chemistry for this achievement in 2011, is widely known, see e.g., \cite{Sen,BaGr}.
Citing Senechal's book \cite{Sen}, ``it was a demise of a paradigm'' which claimed that ``a crystalline structure is {\em by definition} a periodically repeating pattern''. With the discovery of $X$-ray diffraction in 1912,
such a structure was deduced from a diffraction pattern, and it was universally accepted that a solid is a crystal if and only if the diffraction is discrete, i.e., a regular pattern of sharp bright spots, the so-called ``Bragg peaks''.
 Schechtman discovered that a certain aluminium-manganese alloy produces a discrete diffraction pattern, which has a five-fold rotational symmetry, incompatible with periodicity.
This caused a big ``splash'' in the scientific community: physicists, mathematicians, and material scientists worked hard to understand this phenomenon. Amazingly, a mathematical model already existed, namely, the {\em Penrose tilings}, invented by Roger Penrose in 1976. The main feature of the Penrose tiling is that it has a small set of basic tiles (just two in some versions), which can
tile the plane, but only non-periodically. In a special case (there are, in fact, uncountable many distinct Penrose tilings) five-fold rotational symmetry is observed, and {\em statistical} 5-fold
symmetry is present in all Penrose tilings.
Independently of Schechtman's discovery, the crystallographer A. Mackay showed experimentally that the Penrose tiling produces a discrete diffraction pattern with a five-fold rotational symmetry, which later turned out to match well the diffraction pattern of Schechtman's quasicrystals. 

Penrose tilings have many special features: they can be defined by three completely different ways: ``local rules'' (like a ``jigsaw puzzle''), ``substitution'', and ``projection method'',
see \cite{BaGr}. The substitution is  2-dimensional and ``geometric'': the role of symbols is played by the basic tiles, and the operation of substitution
proceeds by expanding by homothety and then subdividing. More general  substitution tilings consequently became popular as well, as models of ``aperiodic order''. Their diffraction spectrum is closely related to dynamical spectrum, {see \cite{Dworkin,Hof95,Hof97}; we outline this connection in Section~\ref{sec:diffract}.} See \cite{RW,Robi1,SolTil} for some
early work on tilings, dynamics, and their spectrum; see also the survey \cite{Robi}. For a very general introduction, see \cite{Radin} and for the general subject of aperiodic order  see \cite{BaGr} and references therein.

\medskip

\noindent {\bf {Acknowledgements.} } {We are grateful to Jean-Paul Allouche and to Houcein El Abdalaoui for very helpful comments on a preliminary version of the manuscript. }A.\,B.'s  research received support from the European Research Council (ERC) under the European Union Horizon 2020 research and innovation programme, grant 647133 (ICHAOS) and from the Agence Nationale de la Recherche, project ANR-18-CE40-0035. The research of B.\,S.\ was supported by the Israel Science Foundation
(grant 911/19).

We are happy to dedicate this article to Nikolai Kapitonovich, advisor and mentor of B.\,S.\ during 1978-1988, to whom B.\,S. owes his love of spectral theory.


\section{Background}

Consider the space of bi-infinite sequences $x=(x_j)_{j\in \Z}\in \A^\Z$, equipped with the product topology induced by a metric, which is defined for $x\ne y$ by
$$
\varrho(x,y) = 2^{-n},\ \ \mbox{where}\ \ n = \min\{|j|: x_j \ne y_j\}.
$$
It is a compact metric space, homeomorphic to the Cantor set. Let $\zeta$ be a substitution on $\A$.
The {\em substitution space} $X_\zeta$ is defined as the set of all bi-infinite sequences $x\in \A^\Z$ such that any word  in $x$
appears as a subword of $\zeta^n(\a)$ for some $\a\in \A$ and $n\in \N$. It is clearly shift-invariant, and we obtain a (topological) {\em substitution dynamical system}  $(X_\zeta, T)$, where
$T$ is the restriction of the left shift from $\A^\Z$. Sometimes the one-sided substitution space $X^+_\zeta\subset \A^\N$ is also considered, on which the left shift $T_+$ is onto, but not 1-to-1.
One should be careful to distinguish the shift action $(X^+_\zeta,T_+)$ from the substitution action $(X^+_\zeta,\zeta)$.
Passing to a power of $\zeta$ if necessary (this does not affect the substitution space), one can assume that there exists a letter $a$ such that $\zeta(a)$ starts with $a$. We then obtain a one-sided fixed point of the substitution action:
\be \label{fixpt}
u=u_0u_1u_2\ldots = \lim_{n\to\infty} \zeta^n(a).
\ee

The {\em substitution matrix} $\Sf=\Sf_\zeta=(\Sf(i,j))$ is the $d\times d$ matrix, where $d=\# \A$, such that $\Sf(i,j)$ is the number
of symbols $i$ in $\zeta(j)$. In our examples we have
$$
{\rm (i)}\ \ \Sf_{\zeta_{_{\rm TM}}} = \left[ \begin{array}{cc} 1 & 1 \\ 1 & 1 \end{array} \right],\ \ {\rm (ii)}\ \ \ \ \ \Sf_{\zeta_{_{\rm F}}} = \left[ \begin{array}{cc} 1 & 1 \\ 1 & 0 \end{array} \right],\ \ \ \ \ 
{\rm (iii)}\ \ \Sf_{\zeta_{_{\rm RS}}} = \left[ \begin{array}{cccc} 1 & 1 & 0 & 0 \\ 1 & 0 & 0 & 1 \\ 0 & 0 & 1& 1 \\ 0 & 1 & 1 & 0  \end{array} \right].
$$

We will always assume that the substitution is {\em primitive}, that is, ${(\Sf_\zeta)}^n$ is strictly positive (entrywise) for some $n\in \Nat$.
Equivalently, starting from any symbol in $\A$, iterating the substitution we eventually obtain a word with all symbols present. 

\begin{prop}[{see \cite[Prop.\,5.5]{Queff}}]
Given a primitive substitution $\zeta$, the associated dynamical system is {\em minimal}, that is, $X_\zeta$ contains no proper closed shift-invariant subsets.
\end{prop}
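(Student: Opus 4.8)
The plan is to deduce minimality from the standard principle that a subshift is minimal exactly when every word of its language recurs \emph{syndetically} (with bounded gaps) in every element of the system. Concretely, I would isolate the combinatorial statement: for each word $w$ occurring in some element of $X_\zeta$ there is a length $R=R(w)$ such that $w$ appears as a subword of every word of length $R$ in the language $\Lk(X_\zeta)$ (the set of words occurring in elements of $X_\zeta$, which by definition are subwords of some $\zeta^n(a)$). Granting this, minimality follows quickly: let $Y\subseteq X_\zeta$ be nonempty, closed and $T$-invariant, and fix $y\in Y$. For any $x\in X_\zeta$ and any $n$, the window $x_{-n}\cdots x_n$ lies in $\Lk(X_\zeta)$, so by the recurrence statement it occurs in every length-$R(x_{-n}\cdots x_n)$ subword of $y$, hence occurs somewhere in $y$, say $y_m\cdots y_{m+2n}=x_{-n}\cdots x_n$. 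Then $T^{m+n}y$ agrees with $x$ on $[-n,n]$, so $\varrho(T^{m+n}y,x)\le 2^{-(n+1)}\to 0$, giving $x\in\overline{\{T^k y:k\in\Z\}}\subseteq Y$. As $x$ was arbitrary, $Y=X_\zeta$.

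The heart of the argument, and the sole place primitivity enters, is the recurrence claim. First I would fix $w\in\Lk(X_\zeta)$, so that $w$ is a subword of $\zeta^k(a)$ for some $a\in\A$ and $k\in\N$. Primitivity provides an exponent $p$ with $\Sf_\zeta^p>0$ entrywise, which says exactly that the letter $a$ occurs in $\zeta^p(b)$ for \emph{every} $b\in\A$; therefore $\zeta^k(a)$, and hence $w$, occurs in $\zeta^{k+p}(b)$ for every $b\in\A$. Writing $L=\max_{b\in\A}|\zeta^{k+p}(b)|$, each of the finitely many level-$(k+p)$ blocks $\zeta^{k+p}(b)$ has length at most $L$ and contains $w$.

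Next I would exploit the self-similar block decomposition. Any $v\in\Lk(X_\zeta)$ occurs in some $\zeta^N(c)$, and using primitivity once more I may take $N\ge k+p$: if $v$ occurs in $\zeta^{N'}(c')$ then, since $c'$ occurs in $\zeta^p(b)$ for all $b$, $v$ also occurs in $\zeta^{N'+p}(b)$, and this may be iterated. Then
$$
\zeta^N(c)=\zeta^{k+p}(d_1)\,\zeta^{k+p}(d_2)\cdots\zeta^{k+p}(d_m),\qquad d_1 d_2\cdots d_m=\zeta^{N-k-p}(c),
$$
exhibits $\zeta^N(c)$ as a concatenation of level-$(k+p)$ blocks, each of length at most $L$ and each containing $w$. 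If $|v|\ge 2L$, then wherever $v$ sits inside $\zeta^N(c)$ it must fully contain at least one complete block $\zeta^{k+p}(d_i)$ (the block starting at $v$'s first block-boundary ends within $2L$ positions of $v$'s start), and therefore contains $w$. Setting $R(w)=2L$ proves the claim.

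I expect the only real subtlety to be bookkeeping rather than conceptual: one must guarantee that the power $N$ can be taken large enough for the block decomposition to be available, which is exactly what the second application of primitivity secures, and that a window of length $2L$ genuinely swallows a full block, which is immediate from the bound $|\zeta^{k+p}(b)|\le L$. Beyond primitivity and the definition of $X_\zeta$, no further input is required.
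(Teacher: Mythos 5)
Your proof is correct, and it is essentially the argument behind the paper's citation: the paper does not prove this proposition itself but refers to Queff\'elec's book, where minimality is obtained exactly this way — primitivity forces every word of the language to occur syndetically (with gaps bounded in terms of the block lengths $|\zeta^{k+p}(b)|$), and then the standard orbit-closure argument yields that every nonempty closed invariant subset is all of $X_\zeta$. Both your key recurrence claim and your bookkeeping (a window of length $2L$ must swallow a complete level-$(k+p)$ block) are sound, so there is nothing to correct.
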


If $\zeta$ is primitive, the Perron-Frobenius (PF) Theorem says that $\Sf$ has a unique dominant (PF) eigenvalue $\theta>1$, 
strictly greater than all other eigenvalues in modulus, and the corresponding right and left eigenvectors are strictly positive. 
One can show that every symbol  occurs in $x\in X_\zeta$ with a uniform frequency and these frequencies form a right PF eigenvector for $\Sf_\zeta$.

Another important assumption is that the substitution is  {\em non-periodic}, that is, $X_\zeta$ contains no periodic sequences.
In the primitive case this is equivalent to the space $X_\zeta$
 being infinite. Usually it is not hard to check aperiodicity: in the primitive constant-length case a criterion due to Pansiot \cite{Pansiot} says that $\zeta$ is non-periodic if and only if there exists
 $\alpha\in \A$ which occurs in $x\in X_\zeta$ with at least two distinct neighborhoods $\gam\alpha\delta$. In the non-constant length case a sufficient condition for non-periodicity is that
 the PF eigenvalue of $\Sf_\zeta$ is irrational. In the 2-symbol case a complete characterization of non-periodic substitutions was recently given in \cite{MRST}, but for larger alphabets a general
 criterion seems to be lacking.

\subsection{Invariant measure and spectrum.} The Borel $\sig$-algebra on $\A^\Z$ is generated by {\em cylinder sets}: for $w\in \A^n$ we denote $[w]_k:= \{x\in \A^\Z:\ u_{k}\ldots u_{k+n-1} = w_1\ldots w_n\}$ and write $[w]= [w]_0$. The following is due to Michel \cite{Michel}, see also \cite[Th.\,5.6]{Queff}.

\begin{theorem} \label{th-ue}
If $\zeta$ is a primitive substitution, then the dynamical system $(X_\zeta,T)$ is uniquely ergodic, that is, it has is a unique invariant Borel probability measure $\mu$.
\end{theorem}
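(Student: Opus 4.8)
The plan is to prove unique ergodicity by establishing that for every continuous function $f$ on $X_\zeta$, the Birkhoff averages $\frac{1}{N}\sum_{j=0}^{N-1} f(T^j x)$ converge \emph{uniformly} in $x$ to a constant. Since the linear span of indicator functions of cylinder sets is dense in $C(X_\zeta)$ (by compactness and the definition of the metric $\varrho$), it suffices to handle $f = \One_{[w]}$ for an arbitrary word $w\in\A^n$. Thus the problem reduces to showing that each word $w$ occurs in every $x\in X_\zeta$ with a well-defined frequency, and that this frequency is uniform over $x$.

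First I would exploit primitivity together with the hierarchical ($\zeta$-generated) structure of $X_\zeta$. By definition every word appearing in $x\in X_\zeta$ is a subword of some $\zeta^n(a)$, and conversely primitivity guarantees (after passing to a suitable power of $\zeta$) that \emph{every} symbol, hence every block $\zeta^k(a)$, appears in $\zeta^m(b)$ for all $b\in\A$ once $m$ is large enough. The key quantitative step is \emph{bounded gaps}: I would show that for each fixed word $w$ occurring in the system there is a length $L=L(w)$ such that $w$ appears in every subword of length $L$ of every $x\in X_\zeta$. This follows because $w$ appears in some $\zeta^k(a)$, and by primitivity $\zeta^k(a)$ appears in $\zeta^{m}(b)$ for every $b$ when $m$ is large, while the images $\zeta^m(b)$ have comparable lengths controlled by the Perron--Frobenius eigenvalue $\theta$; since every $x\in X_\zeta$ is a concatenation of blocks $\zeta^m(b)$ (up to a bounded boundary correction coming from the hierarchical decomposition), the occurrences of $w$ are syndetic with a uniform gap bound.

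The heart of the argument is a \emph{renormalization / self-similarity} estimate for counting occurrences. Let $N_w(v)$ denote the number of occurrences of $w$ in a word $v$. The hierarchical structure lets me relate $N_w(\zeta(v))$ to the numbers $N_b(v)$ of individual letters in $v$ via the substitution matrix, with an error term bounded by the number of occurrences of $w$ that straddle the boundaries between consecutive blocks $\zeta(v_i)\zeta(v_{i+1})$; this error is $O(|v|)$ but with a constant independent of $v$. Iterating $\zeta$ and using that the letter frequencies converge to the normalized right Perron--Frobenius eigenvector (the content of the frequency statement quoted before the theorem), I would show that $N_w(\zeta^m(b))/|\zeta^m(b)|$ converges to a limit $\freq(w)$ \emph{independent of $b$}, at a geometric rate governed by the spectral gap $\theta > |\lambda_2|$. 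Combining the bounded-gap property with this renormalization estimate yields that for any $x\in X_\zeta$ the quantity $\frac{1}{N}\,N_w(x_0\ldots x_{N-1})$ converges to $\freq(w)$ uniformly in $x$, which is exactly uniform convergence of the Birkhoff averages of $\One_{[w]}$ to the constant $\freq(w)$; uniform ergodicity of all continuous $f$ follows by density, and unique ergodicity is equivalent to this.

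The main obstacle is the boundary-effect bookkeeping in the renormalization step: occurrences of $w$ in $\zeta^m(b)$ come in two kinds, those lying entirely within a single image $\zeta(v_i)$ of a letter (these are governed linearly by the substitution matrix, hence by the PF eigenvector) and those straddling a junction between consecutive images. One must show the straddling contributions form a negligible proportion as $m\to\infty$. This is where the spectral gap is essential: the count of interior occurrences grows like $\theta^m$ while the number of junctions, and hence the straddling error, also grows like $\theta^m$ but with a coefficient that is controlled uniformly and divided out correctly, so that after normalization the error contributes a term that stabilizes rather than dominates. Making the uniformity in both $x$ and the initial letter $b$ simultaneously precise, while tracking that the convergence rate depends only on $w$ and the spectral data of $\Sf_\zeta$ and not on $x$, is the delicate part of the proof.
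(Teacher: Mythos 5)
Your overall plan --- reduce unique ergodicity to the uniform existence of word frequencies, then get those frequencies from the Perron--Frobenius theorem through the hierarchical structure of $X_\zeta$ --- is exactly the route of Michel and Queff\'elec that the paper cites; the paper itself offers no proof beyond this one-line indication. The reduction to indicators of cylinder sets and the decomposition of $x[0,N-1]$ into supertiles plus short boundary pieces are both fine.

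The genuine gap is in your renormalization step. You relate $N_w(\zeta(v))$ to the \emph{letter} counts $N_b(v)$, treating straddling occurrences as an error, and you claim that after normalization this error ``stabilizes rather than dominates.'' This cannot work as stated. First, the straddling occurrences are not of lower order: the number of junctions is about $|\zeta^m(b)|/\theta$, so their contribution is a fixed, non-small proportion of all occurrences; in fact, for the Thue--Morse substitution and any word $w$ of length $3$, no occurrence of $w$ fits inside a level-one block $\zeta(b)$ (these have length $2$), so \emph{every} occurrence straddles a junction and your ``main term'' is identically zero. Second, proving that the normalized straddling count stabilizes is circular: straddling occurrences of $w$ are in bijection with occurrences (with prescribed offsets) of words of length at most $|w|+1$ one level down, so their stabilization is equivalent to the existence of frequencies of longer words --- the very thing being proved. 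There are two standard repairs. (i) Renormalize against level-$k$ blocks with $k\to\infty$: write $\zeta^m(b)=\zeta^k(\zeta^{m-k}(b))$ and compare $N_w(\zeta^m(b))$ with $\sum_c N_c(\zeta^{m-k}(b))\,N_w(\zeta^k(c))$; the junction error per symbol is then $O\bigl(|w|\,\theta^{-k}\bigr)$, which genuinely tends to $0$, and one shows the vectors of block frequencies form a Cauchy sequence whose limit is independent of the letter. (ii) Follow Michel--Queff\'elec and pass to the induced substitution $\zeta_n$ on the alphabet of $n$-words occurring in the system ($n=|w|$), show it is primitive (this is where your bounded-gap observation is actually needed), and apply Perron--Frobenius to \emph{its} substitution matrix: occurrences of $w$ become occurrences of a single letter and the straddling problem disappears entirely. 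Note also that the spectral gap $\theta>|\lambda_2|$ is not what drives the convergence; it only affects rates, and the uniform frequencies exist by primitivity alone via either repair.
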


In fact, one shows that for $w\in \A^+$ the measure of the cylinder set $\mu([w]_k)$ (independent of $k$ by shift invariance) is the frequency of occurrences of $w$ in $x\in X_\zeta$, which
exists uniformly, in view of the Perron-Frobenius Theorem. In particular, 
\be \label{freqPF}
\mbox{\em $(\mu([j]))_{j\in \A}$\ \  is a right PF eigenvector of \  $\Sf_\zeta$.}
\ee

\smallskip

Given a measure-preserving system $(X,T,\mu)$ (for us the measure is always finite, by default), it is of interest to study the spectral properties of the associated {\em Koopman operator}
$U= U_T:\ f\mapsto f\circ T$  on $L^2(X,\mu)$. Measure-preserving means that $T_*\mu = \mu$. If $T$ is $\mu$-preserving and a.e.\ invertible, it is  called an {\em automorphism} of the 
measure space $(X,\mu)$;
this implies that $U$ is a unitary operator.
Observe that $U \One = \One$, where $\One$ is the constant-one
 function in $L^2(X,\mu)$. Thus, the ``interesting part'' of spectral theory occurs on $L^2_0 =\{f\in L^2: \langle \One,f\rangle = 0\}$.
The symbol $\sig_U$ denotes the usual (compact) spectrum of the unitary operator.
Recall that for $f,g\in L^2(X,\mu)$ the (complex) spectral measure $\sig_{f,g}$ is determined by the equations
$$\widehat{\sig}_{f,g}(-k) =\int_0^1 e^{2\pi i k \om}\,d\sig_{f,g}(\om)=
\langle U^k f,g\rangle,\ \ k\in \Z.
$$
We write $\sig_f = \sig_{f,f}$.

Suppose we are given two automorphisms $(X_1,T_1,\mu_1)$, $(X_2,T_2,\mu_2)$ and a map $\varphi:X_1\to X_2$ such that $\varphi_*\mu_1 = \mu_2$ and
$\varphi\circ T_1 = T_2 \circ\varphi$. We say that $(X_2,T_2,\mu_2)$ is a {\em factor} of $(X_1,T_1,\mu_1)$ if $\varphi$ is onto and that they are {\em isomorphic} if $\varphi$ is
invertible (as usual, all such statements and equalities are understood a.e.). For isomorphic automorphisms the corresponding Koopman operators are unitarily equivalent; if $T_2$ is a factor of
$T_1$, then $U_{T_2}$ is unitarily equivalent to restriction of $U_{T_1}$ on an invariant subspace of $L^2(X_1,\mu_1)$.
 For the background in Ergodic Theory and related Spectral Theory the reader is referred to \cite{KSF,Walters,EW}; we recall some basic facts:

\begin{theorem} \label{th-e1}
Let $(X,T,\mu)$ be an automorphism.

{\rm (i)} The system is ergodic $\iff$ $\lam = 1$ is a simple eigenvalue for $U_T$.

{\rm (ii)} The system is weakly mixing $\iff$ $U_T|_{L^2_0}$ has no eigenvalues.

{\rm (iii)} The system is (strongly) mixing $\iff$  for any $f\in L^2_0(X,\mu)$ we have $\widehat{\sig}_f (k) \to 0$ as $k\to \infty$.
\end{theorem}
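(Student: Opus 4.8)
The three statements are classical facts from the spectral theory of measure‑preserving systems, and the plan is to assemble them from the spectral theorem together with two elementary harmonic‑analysis inputs. For (i), I would first identify the eigenspace of $U_T$ at $\lam=1$ with the space of $T$‑invariant $L^2$‑functions: $U_Tf=f$ means $f\circ T=f$ a.e. For the forward implication, ergodicity forces every such $f$ to be constant, since for real invariant $f$ each super‑level set $\{f>c\}$ is $T$‑invariant and hence of measure $0$ or $1$, which pins $f$ to a constant; thus the $\lam=1$ eigenspace reduces to the constants and the eigenvalue is simple. For the converse, simplicity says the only invariant functions are constants, and applying this to $f=\mathbf{1}_A$ for an invariant set $A$ gives $\mu(A)\in\{0,1\}$, i.e. ergodicity.

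For (ii) I would pass to spectral measures and invoke Wiener's lemma: for a finite positive measure $\sig_f$ on the circle, $\frac1N\sum_{k=0}^{N-1}|\what{\sig}_f(k)|^2\to \sum_\om \sig_f(\{\om\})^2$, the sum of squared atom masses (this follows by inserting the Fourier expansion and using that $\frac1N\sum_{k=0}^{N-1}e^{-2\pi ik(x-y)}\to \mathbf{1}_{x=y}$ boundedly, then dominated convergence against $\sig_f\times\sig_f$). An eigenvalue $e^{2\pi i\om}$ of $U_T|_{L^2_0}$ is present precisely when some spectral measure $\sig_f$, $f\in L^2_0$, has an atom, so ``no eigenvalues on $L^2_0$'' is equivalent to all these $\sig_f$ being continuous, i.e. to $\frac1N\sum_{k<N}|\what{\sig}_f(k)|^2\to 0$ for every $f\in L^2_0$. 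It then remains to match this with the measure‑theoretic definition of weak mixing, $\frac1N\sum_{k<N}|\mu(A\cap T^{-k}B)-\mu(A)\mu(B)|\to 0$: subtracting means reduces it to $\frac1N\sum_{k<N}|\langle U_T^k f,g\rangle|\to 0$ for $f,g\in L^2_0$; the diagonal case $f=g$ follows from the Wiener average by Cauchy--Schwarz (an $\ell^2$‑Ces\`aro limit of $0$ forces the corresponding $\ell^1$‑Ces\`aro limit to vanish), and the off‑diagonal case follows by polarizing $\sig_{f,g}$ into diagonal spectral measures. The converse is immediate: an eigenfunction $f\in L^2_0$ with $U_Tf=\lam f$ gives $|\langle U_T^k f,f\rangle|=\|f\|^2$, obstructing weak mixing.

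For (iii) the forward implication is direct: strong mixing says $\langle U_T^k f,g\rangle\to \langle f,\One\rangle\langle\One,g\rangle$ for all $f,g$, which on $L^2_0$ reads $\what{\sig}_f(-k)=\langle U_T^k f,f\rangle\to 0$, and since $\sig_f$ is real and positive, $\what{\sig}_f(k)=\overline{\what{\sig}_f(-k)}\to 0$ as well. For the converse I would first upgrade the hypothesis from diagonal to mixed spectral measures by polarization, obtaining $\what{\sig}_{f,g}(k)\to 0$, i.e. $\langle U_T^k f,g\rangle\to 0$ for all $f,g\in L^2_0$, and then remove the mean‑zero restriction through the orthogonal decomposition $L^2=\C\One\oplus L^2_0$, writing a general $f$ as $\langle f,\One\rangle\One+f_0$ with $f_0\in L^2_0$; taking $f=\mathbf{1}_A$, $g=\mathbf{1}_B$ recovers $\mu(T^{-k}A\cap B)\to\mu(A)\mu(B)$, which is strong mixing.

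I expect the only genuine obstacle to lie in part (ii): establishing Wiener's lemma cleanly and, more importantly, carefully translating between the measure‑theoretic weak‑mixing condition and the vanishing of the Ces\`aro $\ell^1$‑averages of the correlations $\langle U_T^k f,g\rangle$. Everything else is routine bookkeeping with the spectral theorem and the decomposition $L^2=\C\One\oplus L^2_0$, and all of it is standard material for which the references \cite{KSF,Walters,EW} can be cited.
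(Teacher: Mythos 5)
The paper does not actually prove this theorem: it is stated as recalled background, with the proofs deferred to the textbooks \cite{KSF,Walters,EW}. Your sketch is correct and is precisely the standard argument found in those references --- the invariant-function characterization for (i), Wiener's lemma plus polarization for (ii), and polarization together with the decomposition $L^2=\C\One\oplus L^2_0$ for (iii) --- the only step you leave implicit being that an atom of $\sig_f$ at $\om$ produces an eigenfunction via the spectral projection $E(\{\om\})f$, which is the standard way to close part (ii).
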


For an ergodic measure-preserving system, every eigenfunction has constant modulus a.e., hence eigenfunctions are actually in $L^\infty$ and as a consequence, the set of eigenvalues forms
a subgroup of $\T$. (In general, one can usually restrict considerations to ergodic systems using the {\em ergodic decomposition}, but this will not be our concern since primitive substitution systems
are uniquely ergodic, hence ergodic.)
We  recall the following classical 

\begin{theorem}[Halmos and von Neumann]
{\rm (i)} Two ergodic measure-preserving system with purely discrete spectrum are measurably isomorphic if and only if they have the same group of eigenvalues.

{\rm (ii)} An ergodic measure-preserving system with purely discrete spectrum is measurably isomorphic to a translation on a compact abelian group.
\end{theorem}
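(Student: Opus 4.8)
The plan is to establish (ii) in the sharp form that produces an explicit group-rotation model, from which (i) follows at once: if two ergodic systems with purely discrete spectrum share the same eigenvalue group $\Lambda$, then each is isomorphic to the \emph{same} canonical rotation, hence to each other; the converse direction of (i) is immediate, since isomorphic automorphisms have unitarily equivalent Koopman operators and therefore the same eigenvalues. So the whole content lies in modeling a single ergodic, purely-discrete-spectrum system as a group rotation determined only by $\Lambda$.

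Fix such a system $(X,T,\mu)$ and let $\Lambda\subset\T$ be its group of eigenvalues. Because $L^2(X,\mu)$ is separable and eigenfunctions for distinct eigenvalues are orthogonal, $\Lambda$ is countable. By ergodicity every eigenvalue is simple and every eigenfunction has constant modulus, so for each $\lambda\in\Lambda$ I may select a unimodular eigenfunction and form the multiplicative group $\mathcal U$ of all unimodular eigenfunctions. The eigenvalue map $\pi\colon\mathcal U\to\Lambda$ is a surjective homomorphism whose kernel consists of the $T$-invariant unimodular functions, i.e.\ the constants $\T$, yielding a short exact sequence $1\to\T\to\mathcal U\xrightarrow{\pi}\Lambda\to1$ of abelian groups. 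The key step — and the one genuine obstacle — is to trivialize the associated cocycle, that is, to choose the eigenfunctions \emph{multiplicatively}, $\phi_\lambda\phi_\mu=\phi_{\lambda\mu}$ with $\phi_1=\One$. This is precisely a splitting of the sequence, and it exists because $\T=\R/\Z$ is divisible, hence injective as a $\Z$-module, so $\mathrm{Ext}^1_{\Z}(\Lambda,\T)=0$ and every such extension splits. Countability of $\Lambda$ is what then lets me pass from the resulting identities $\phi_\lambda\phi_\mu=\phi_{\lambda\mu}$ (each valid off a null set) to a single conull set on which $\lambda\mapsto\phi_\lambda(x)$ is an honest homomorphism $\Lambda\to\T$.

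With this multiplicative family, let $G=\widehat{\Lambda}$ be the Pontryagin dual of the discrete group $\Lambda$ (so $G$ is compact), let $g_0\in G$ be the character $\lambda\mapsto\lambda$ coming from the inclusion $\Lambda\hookrightarrow\T$, and let $S\colon g\mapsto g g_0$ be the corresponding rotation, with Haar measure $m$. I define $\Phi\colon X\to G$ by $\Phi(x)(\lambda)=\phi_\lambda(x)$, which lands in $G$ for a.e.\ $x$ by the previous step. Equivariance $\Phi\circ T=S\circ\Phi$ is a direct computation, since $\phi_\lambda(Tx)=(U_T\phi_\lambda)(x)=\lambda\,\phi_\lambda(x)$. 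To identify $\Phi_*\mu$ with $m$ I compare Fourier coefficients against the characters $\mathrm{ev}_\lambda$ of $G$: one has $\int_G\mathrm{ev}_\lambda\,d(\Phi_*\mu)=\int_X\phi_\lambda\,d\mu=\langle\phi_\lambda,\One\rangle$, which equals $1$ for $\lambda=1$ and $0$ otherwise by orthogonality, exactly matching the Fourier coefficients of Haar measure; since characters separate measures, $\Phi_*\mu=m$.

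It remains to see that $\Phi$ is an isomorphism mod $0$. The pullback satisfies $\Phi^*\mathrm{ev}_\lambda=\phi_\lambda$, so $\Phi^*$ carries the orthonormal basis $\{\mathrm{ev}_\lambda\}$ of $L^2(G,m)$ onto the orthonormal basis $\{\phi_\lambda\}$ of $L^2(X,\mu)$ — the latter being a basis precisely because the spectrum is purely discrete with simple eigenvalues. Hence $\Phi^{-1}(\mathcal B_G)$ generates the full $\sigma$-algebra of $X$ modulo $\mu$, and a measure-preserving map between standard probability spaces that generates the whole $\sigma$-algebra is invertible mod $0$. This gives $(X,T,\mu)\cong(G,S,m)$, establishing (ii); noting finally that $U_S\mathrm{ev}_\lambda=\lambda\,\mathrm{ev}_\lambda$ and that the $\mathrm{ev}_\lambda$ form a basis of $L^2(G,m)$ shows $(G,S,m)$ is itself ergodic with purely discrete spectrum and eigenvalue group exactly $\Lambda$, closing the loop and yielding (i). I expect the cocycle-splitting step to be the only nontrivial input; everything else is bookkeeping with orthonormal bases and Fourier coefficients.
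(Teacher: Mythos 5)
Your proof is correct, but note that the paper itself offers no proof of this statement: it is recalled as a classical background theorem (attributed to Halmos and von Neumann), with the reader referred to standard texts such as \cite{KSF,Walters,EW}. So there is no ``paper route'' to compare against; what you have written is essentially the standard modern argument, the one found for instance in Einsiedler--Ward. All the key steps check out: countability of the eigenvalue group $\Lambda$ from separability of $L^2$ and orthogonality of eigenfunctions; simplicity of eigenvalues and constancy of $|\phi_\lambda|$ from ergodicity; the splitting of $1\to\T\to\mathcal{U}\to\Lambda\to 1$ via divisibility (hence injectivity as a $\Z$-module) of $\T$, which is indeed the one genuinely nontrivial input; the passage to a single conull set using countability of $\Lambda$; the identification of the push-forward with Haar measure on $\widehat{\Lambda}$ by matching Fourier coefficients; and the final ``generating pullback $\sigma$-algebra implies invertible mod $0$'' step. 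Two small points deserve to be made explicit. First, that last step, as well as the separability of $L^2(X,\mu)$, requires the underlying space to be a Lebesgue (standard) probability space; this is the universal standing assumption in ergodic theory and is implicit in the paper's statement, but your argument genuinely uses it twice. Second, for part (i) you correctly close the loop by verifying that the canonical rotation $(\widehat{\Lambda},S,m)$ is itself ergodic with purely discrete spectrum and eigenvalue group exactly $\Lambda$; without that verification, ``both isomorphic to the same model'' would not by itself show the model is determined by $\Lambda$ alone, so it is good that you included it.
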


Spectral properties of substitution dynamical systems have been studied extensively, see \cite{Queff,Siegel} and references therein. These systems are never strongly mixing by a result of
Dekking and Keane \cite{DK}, hence there is always a singular spectral component, by Theorem~\ref{th-e1}(iii) and the Riemann-Lebesgue Lemma.
 Before going further, let us demonstrate some possibilities for the spectrum of substitution systems on examples given above.

\medskip

(i) The Thue-Morse substitution automorphism has purely  singular spectrum. It factors onto the translation $x\mapsto x+1$ on the group $\Z_2$ of
2-adic integers; the discrete component of the spectrum
corresponds to the restriction of $U_T$ to the space of $L^2$ functions that are even with respect to the involution $\ov{0}=1$, $\ov{1}=0$. On the orthogonal complement -- the subspace of odd functions -- the spectrum is singular continuous, given by a Riesz product. We will describe it in more detail and sketch a proof of singularity below.

(ii) The Fibonacci substitution system has purely discrete spectrum; it is isomorphic to the translation on the 1-torus $\T^1$: $x\mapsto x + \frac{\sqrt{5}-1}{2}$ (mod 1).

(iii) The Rudin-Shapiro substitution has a mixed spectrum: the discrete component corresponds to the factor onto the translation $x\mapsto x+1$ on the group $\Z_4$ of 4-adic integers. 
There is also a Lebesgue component of multiplicity 2.

\medskip

Here are two general facts about the spectrum as a {\em set}. The automorphism $T$ is called {\em aperiodic} if the set of $T$-periodic points has measure zero.

\begin{theorem}[Nadkarni \cite{Nad}] \label{th-dense}
If $T$ is an aperiodic automorphism of a probability measure space $(X,\mu)$, then $\sig_U = \T$ for $U=U_T$.
\end{theorem}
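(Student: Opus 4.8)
The plan is to show that every $\lambda\in\T$ is an \emph{approximate eigenvalue} of $U=U_T$, meaning that there exist unit vectors $f_n\in L^2(X,\mu)$ with $\|(U-\lambda)f_n\|_{L^2}\to 0$. Since $U$ is unitary, hence normal, the spectrum coincides with the approximate point spectrum, so producing such $f_n$ proves $\lambda\in\sig_U$. As $\sig_U$ is a closed subset of $\T$, establishing this for every $\lambda\in\T$ immediately gives $\sig_U=\T$. Thus the whole problem reduces to the construction of approximate eigenvectors, for each fixed $\lambda$, out of data adapted to the dynamics of $T$.

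The only input I would need is a supply of tall Rokhlin towers, and this is precisely where aperiodicity is used. I would invoke the Rokhlin--Halmos lemma: if $T$ is an aperiodic automorphism of a probability space, then for every $n\in\N$ there is a measurable \emph{base} $B$ such that the levels $B,TB,\dots,T^{n-1}B$ are pairwise disjoint and their union has measure close to $1$; in particular $\mu(B)>0$, and $\mu(T^jB)=\mu(B)$ for all $j$ since $T$ preserves $\mu$.

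Given such a tower I set $f=\sum_{j=0}^{n-1}\lambda^{j}\mathbf{1}_{T^jB}$. Because $\mathbf{1}_{T^jB}\circ T=\mathbf{1}_{T^{j-1}B}$, a direct computation gives $Uf-\lambda f=\mathbf{1}_{T^{-1}B}-\lambda^{n}\mathbf{1}_{T^{n-1}B}$: the coefficients are chosen so that $c_{j}=\lambda\,c_{j-1}$, which makes all interior levels cancel, leaving an error supported only on the bottom and top of the tower. Hence $\|Uf-\lambda f\|_{L^2}\le 2\sqrt{\mu(B)}$ by the triangle inequality, while $\|f\|_{L^2}^2=\sum_{j=0}^{n-1}\mu(T^jB)=n\,\mu(B)$. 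Therefore $\|(U-\lambda)f\|_{L^2}/\|f\|_{L^2}\le 2/\sqrt{n}$, and normalizing $f_n:=f/\|f\|_{L^2}$ yields the required approximate eigenvectors as $n\to\infty$. This shows $\lambda\in\sig_U$ for every $\lambda\in\T$, and hence $\sig_U=\T$.

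The main obstacle is entirely concentrated in the Rokhlin--Halmos lemma; once tall towers are available, the tower computation above is elementary. It is worth noting why aperiodicity cannot be dropped: if $T$ had a set of periodic points of positive measure one could not build towers of unbounded height, and a genuinely periodic map already has spectrum equal to a finite group of roots of unity rather than all of $\T$. Finally, one checks that $\langle f,\One\rangle=\mu(B)\sum_{j<n}\lambda^{j}$ stays bounded while $\|f\|_{L^2}^2=n\,\mu(B)$ is bounded below, so the $f_n$ are asymptotically orthogonal to the constants; projecting onto $L^2_0$ (which commutes with $U$) then shows that in fact every $\lambda\in\T\sms\{1\}$ lies in the spectrum of $U$ restricted to $L^2_0$.
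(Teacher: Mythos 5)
Your proof is correct, and it is essentially the standard argument: the paper itself states this theorem without proof (it is a survey, citing Nadkarni's book \cite{Nad}), and the proof given there is exactly this Rokhlin--Halmos tower construction of approximate eigenvectors $f=\sum_{j<n}\lambda^j\mathbf{1}_{T^jB}$, with aperiodicity entering only through the existence of arbitrarily tall towers. Your closing observation that the $f_n$ are asymptotically orthogonal to constants, so that every $\lambda\ne 1$ already lies in the spectrum of $U$ restricted to $L^2_0$, is a correct and standard refinement.
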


It is a more delicate problem to determine the support of the spectral measure $\sig_f$ for a specific test function $f$. Here is a recent result:

\begin{theorem}[A. Borichev, M. Sodin, B. Weiss \cite{BSW}]
If $T$ is an aperiodic automorphism of a probability measure space $(X,\mu)$ and $f:X\to \Z$ is finite-valued then either $\supp(\sig_f) =\{e^{2\pi i k/N}\}_{k\le N}$ for some $N\in \N$, or
$\supp(\sig_f) = \T$.
\end{theorem}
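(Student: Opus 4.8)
The key is a clean reduction that turns the geometric statement about $\supp(\sig_f)$ into a purely dynamical one. Write $c_k:=\langle U^k f,f\rangle=\int_\T e^{2\pi i k\om}\,d\sig_f(\om)$; since $f$ is real-valued, each $c_k$ is real and $\sig_f$ is symmetric. For $N\in\N$ let $R_N=\{e^{2\pi i k/N}:0\le k<N\}$ be the group of $N$-th roots of unity. I would first record the equivalence
\be \label{eq-reduction}
f\circ T^N=f\ \Longleftrightarrow\ \supp(\sig_f)\subseteq R_N .
\ee
Indeed, $\|U^N f-f\|^2=2c_0-2c_N=2\int_\T\bigl(1-\cos 2\pi N\om\bigr)\,d\sig_f(\om)\ge 0$, and this vanishes precisely when $\cos 2\pi N\om=1$ for $\sig_f$-a.e.\ $\om$, i.e.\ when $\sig_f$ is carried by $R_N$. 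Since a subset of $\T$ is a finite set of roots of unity exactly when it is contained in some $R_N$, the equivalence \eqref{eq-reduction} shows that the ``finite'' alternative of the theorem is the same as the existence of an $N$ with $f\circ T^N=f$. Thus the whole statement collapses to the single implication
\be \label{eq-goal}
\supp(\sig_f)\ne\T\ \Longrightarrow\ \exists\,N\ge 1:\ f\circ T^N=f .
\ee

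To attack \eqref{eq-goal} I would pass to the symbolic factor generated by $f$. The itinerary map $x\mapsto(f(T^k x))_{k\in\Z}$ pushes $\mu$ forward to a shift-invariant probability measure $\nu$ on $V^\Z$, where $V=f(X)\subset\Z$ is the finite value set; under this factor $T$ becomes the shift and $f$ becomes the zero-coordinate function $y\mapsto y_0$, carrying the \emph{same} spectral measure $\sig_f$. In these terms \eqref{eq-goal} asserts that a stationary process taking values in a finite subset of $\Z$, whose spectral measure omits an arc, must be \emph{periodic in time}: $y_{k+N}=y_k$ for $\nu$-a.e.\ $y$. Both hypotheses are genuinely used. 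If the integer constraint is dropped the conclusion fails: over an irrational rotation the real function $\cos 2\pi x$ has $\sig_f=\tfrac12(\delta_{e^{2\pi i\alpha}}+\delta_{e^{-2\pi i\alpha}})$, a two-point support that is neither $\T$ nor a set of roots of unity. Conversely, the integer-valued examples on which the support is infinite—say a non-constant $f$ over an irrational rotation, or a Sturmian system—have spectral atoms $\{e^{2\pi i n\alpha}\}$ that are \emph{dense} in $\T$, so $\supp(\sig_f)=\T$ and no arc is omitted. The content of \eqref{eq-goal} is therefore precisely that the finite integer structure forbids a true spectral gap unless the process degenerates to a periodic one.

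The crux—and the step I expect to be the main obstacle—is the implication \emph{spectral gap $\Rightarrow$ time-periodicity} for finite integer-valued processes. Here I would combine two ingredients. First, a spectral gap forces the density of the absolutely continuous part of $\sig_f$ to vanish on a set of positive length, so by the Kolmogorov--Szeg\H{o} criterion the process is \emph{deterministic}: $y_0$ is measurable with respect to its own past. Equivalently, the Herglotz transform $H(z)=\int_\T\frac{e^{2\pi i\om}+z}{e^{2\pi i\om}-z}\,d\sig_f(\om)$ continues analytically across the boundary arc on which $\sig_f$ puts no mass, which rigidifies the positive-definite sequence $(c_k)$. Second, I would exploit the finite alphabet $V\subset\Z$ to upgrade approximate determinism to exact periodicity: every predictor manufactured from the gap is an $L^2$-limit of the $V$-valued functions $U^k f$, and because distinct integer values differ by at least $1$ one has the comparison $\mu(U^k f\ne f)\le \|U^k f-f\|^2$, so $L^2$-smallness of $U^k f-f$ forces coincidence on large sets. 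The goal is to convert the analytic rigidity of $(c_k)$ into the exact identity $c_N=c_0$ for some $N$, equivalently into nontriviality of the return-time set $\{k:\,U^k f=f\}$. Turning the ``approximate'' determinism furnished by analytic continuation into this \emph{exact} combinatorial periodicity, using only that the alphabet is a finite set of integers, is the delicate heart of the matter, and it is exactly where finiteness and integrality—rather than mere real-valuedness—become indispensable.
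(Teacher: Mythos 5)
The survey you are working from does not actually prove this theorem: it is quoted from Borichev--Sodin--Weiss \cite{BSW}, so your attempt has to be measured against the argument of that reference. Your opening reduction is correct, and it is the right first move: from $\|U^N f-f\|^2=2\int_{\T}\bigl(1-\cos 2\pi N\om\bigr)\,d\sig_f(\om)$ one gets that $\supp(\sig_f)$ lies in the $N$-th roots of unity if and only if $f\circ T^N=f$, so the theorem amounts to the implication ``$\supp(\sig_f)\ne\T$ implies $f\circ T^N=f$ for some $N$.'' (Your reading also quietly corrects the statement as printed here: an integer-valued eigenfunction with eigenvalue $-1$ over an aperiodic base has $\supp(\sig_f)=\{-1\}$, which is contained in, but not equal to, a full set of roots of unity.) The passage to the symbolic factor is harmless. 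Everything after that, however, is a plan rather than a proof, and the plan has a fatal flaw.

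The flaw is in your ingredient (a): you replace the hypothesis ``$\supp(\sig_f)$ omits an arc'' by its consequence ``the absolutely continuous density $w$ vanishes on a set of positive measure, hence $\int\log w=-\infty$, hence the process is deterministic in the Kolmogorov--Szeg\H{o} sense.'' This weakening loses the theorem irretrievably: \emph{every} process with purely singular spectral measure is deterministic in that sense, and finite integer-valued, non-periodic, deterministic processes exist in abundance --- for instance the Thue--Morse process $f(x)=(-1)^{x_0}$, whose spectral measure is purely singular by Proposition~\ref{prop:singular} of this very survey, yet which is not periodic (accordingly its spectral support is all of $\T$, singularity notwithstanding). So ``deterministic $+$ finitely many integer values $\Rightarrow$ periodic'' is false, and no refinement of your ingredient (b) can rescue it: the gap hypothesis constrains the \emph{singular} part of $\sig_f$ as well, and prediction theory never sees that. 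The mechanism that does work --- and the one behind \cite{BSW} --- uses the gap on sample paths rather than on the correlation sequence: for a.e.\ $x$ the two-sided sequence $(f(T^nx))_{n\in\Z}$, viewed as a distribution on $\T$, has its spectrum inside $\supp(\sig_f)$; hence the ``future'' series $F_x(z)=\sum_{n\ge 0}f(T^nx)z^n$, analytic in $\D$, and the ``past'' series, analytic outside $\ov{\D}$, are analytic continuations of one another across the gap arc. Since the coefficients of $F_x$ take finitely many values, the classical Szeg\H{o}--Carlson--P\'olya dichotomy (such a series is either rational, of the form $P(z)/(1-z^N)$, or has $\T$ as natural boundary) forces $F_x$ to be rational, i.e.\ each sample path is eventually periodic; an ergodic-theoretic argument then upgrades eventual periodicity of a.e.\ sample path to the exact identity $f\circ T^N=f$ with one $N$. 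Finiteness and integrality of the value set enter through that dichotomy --- not, as your sketch hopes, as a device for upgrading $L^2$-smallness of $U^kf-f$ to exact equality.
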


\subsection{Discrete component, purely discrete spectrum} This topic is not the focus of our paper, but we cannot omit it entirely, so we give a brief overview below.
\subsubsection{Substitutions of constant length}
Suppose that the substitution has constant length $q$. Then the group of eigenvalues is non-trivial and contains
the group of $q$-adic rationals; a complete description of the discrete spectral component was given by Dekking \cite{Dek1}. 
The {\em height} $h$ of the substitution is defined as follows: let
$$
g_\ell ={\rm gcd} \{k\ge 1:\ u_{k+\ell}=u_\ell\},
$$
where $u=(u_j)_{j\ge 0}$ is the fixed point of the substitution (\ref{fixpt}); then define, for a substitution $\zeta$ of constant length $q$:
\be \label{height}
h=h(\zeta) = \max\{n\ge 1:\ (n,q)=1,\ n\ \mbox{divides}\ g_0\}.
\ee
In turns out that also $h(\zeta) = \max\{n\ge 1:\ (n,q)=1,\ n\ \mbox{divides}\ g_\ell\}$ for  any $\ell >0$.
In particular, $h=1$ if there is any word of the form $\alpha\alpha$ in $x\in X_\zeta$.

Dekking \cite{Dek1} proved that the group of eigenvalues for $(X_\zeta,T,\mu)$, where $\zeta$ is primitive aperiodic of constant length $q$, is $e(\Z(q)\times \Z/h\Z)$, where
$\Z(q)$ is the group of $q$-adic rationals and $e(t) = e^{2\pi i t}$. 
Furthermore, Dekking showed that it is always possible to reduce a substitution with $h>1$ to one with $h=1$, called ``pure base''.
For a  constant-length substitution of height one, 
{\em Dekking's coincidence condition} \cite{Dek1}  gives an answer for when the spectrum is purely discrete. 

\begin{theorem}[Dekking 1978] \label{thm:dekking}
A primitive aperiodic
substitution, of constant length $q$ and height one,
has purely discrete spectrum if and only if $\zeta$ has a {\em coincidence}, namely, there exist $k$ and
$n$ such that
$$
(\zeta^k(0))_n =\ldots = (\zeta^k(d-1))_n
$$
\end{theorem}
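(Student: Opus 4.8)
The plan is to translate the spectral statement into a purely combinatorial property of a canonical factor map onto the $q$-odometer, and then to read off the coincidence condition from the ranks of the ``column maps'' of $\zeta$. First I would recall that, by recognizability of primitive aperiodic substitutions, every $x\in X_\zeta$ has a unique decomposition $x=T^{s}\zeta(y)$ with $y\in X_\zeta$ and $0\le s<q$; iterating the desubstitution produces a $q$-adic address and hence a continuous factor map $\pi:X_\zeta\to\Z_q$ onto the $q$-odometer $(\Z_q,+1)$, with $\pi_*\mu$ the Haar measure. Since the height is one, Dekking's description of the eigenvalue group gives exactly $e(\Z(q))$, so the $q$-odometer is the Kronecker (maximal equicontinuous) factor; by the Halmos--von Neumann theorem, $(X_\zeta,T,\mu)$ has purely discrete spectrum if and only if it is measurably isomorphic to this factor, i.e.\ \emph{if and only if $\pi$ is one-to-one $\mu$-almost everywhere}. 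Thus the whole theorem reduces to showing that $\pi$ is a.e.\ injective precisely when $\zeta$ admits a coincidence.

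For the combinatorics I would introduce the column maps $f_{k,n}:\A\to\A$, $f_{k,n}(a)=(\zeta^k(a))_n$ for $0\le n<q^k$, which satisfy the composition law $f_{k+k',\,q^kj+s}=f_{k,s}\circ f_{k',j}$; in particular they form the semigroup generated by $f_{1,0},\dots,f_{1,q-1}$. A coincidence at level $k$ is exactly a position $n$ with $f_{k,n}$ constant, i.e.\ of rank one. Unwinding the desubstitution, a point of the fiber $\pi^{-1}(z)$ corresponds to a consistent tower $(a_m)_{m\ge0}$ with $a_m=f_{1,z_m}(a_{m+1})$, and the set of possible values of the central symbol $x_0=a_0$ over this fiber equals
\be
S(z)=\bigcap_{m\ge0}\mathrm{Im}\bigl(f_{m,s_m(z)}\bigr),\qquad s_m(z)=\textstyle\sum_{i<m}z_iq^i .
\ee
The composition law shows $\mathrm{Im}(f_{m+1,s_{m+1}})=f_{m,s_m}(\mathrm{Im}\,f_{1,z_m})\subseteq\mathrm{Im}(f_{m,s_m})$, so these images are nested and decreasing; by K\"onig's lemma every element of $S(z)$ is realised by an infinite tower. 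By shift-invariance, $\pi$ is a.e.\ injective if and only if $|S(z)|=1$ for a.e.\ $z$.

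For the \emph{if} direction I would quantify coincidences by $c_k=q^{-k}\#\{n:f_{k,n}\text{ is constant}\}$. The composition law gives the submultiplicativity $1-c_{k+k'}\le(1-c_k)(1-c_{k'})$, and a single coincidence at level $k_0$ forces $c_{k_0}\ge q^{-k_0}>0$; hence $c_k\to1$ geometrically and $\sum_k(1-c_k)<\infty$. Since $s_k(z)$ is uniformly distributed over $\{0,\dots,q^k-1\}$ under Haar measure, the probability that $s_k(z)$ is a coincidence position is $c_k$, so by Borel--Cantelli, for a.e.\ $z$ the map $f_{k,s_k(z)}$ is eventually constant; then $\mathrm{Im}(f_{k,s_k})$ is eventually a singleton and $|S(z)|=1$. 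Therefore $\pi$ is a.e.\ injective and the spectrum is purely discrete.

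For the \emph{only if} direction, assume there is no coincidence. Then no column is constant, so every $f_{m,s_m(z)}$ has rank at least two; being a nested decreasing family of finite sets of size $\ge2$, the intersection $S(z)$ has $|S(z)|\ge2$ for \emph{every} $z$, and K\"onig's lemma realises two distinct fiber points. Thus every fiber of $\pi$ has at least two points, $\pi$ is nowhere injective, the Kronecker factor is proper, and a continuous spectral component must appear. (Equivalently, the semigroup lemma---any finite transformation semigroup in which every pair of distinct letters is collapsed by some element already contains a constant map---shows that $\zeta$ has a coincidence as soon as $S(z)$ can be reduced to a point.) The main obstacle, and the step where the hypotheses really enter, is the reduction in the first paragraph: one must invoke recognizability to define $\pi$ cleanly and, crucially, the height-one computation of the eigenvalue group to know that the $q$-odometer is the \emph{entire} Kronecker factor, so that a.e.\ injectivity of $\pi$ is genuinely equivalent to pure discreteness rather than merely to being a factor.
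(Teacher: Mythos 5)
The paper itself states Dekking's theorem without proof (it is a survey and simply cites \cite{Dek1}), so your argument has to stand on its own. Your reduction and your \emph{if} direction are essentially sound and follow the classical route: recognizability gives the odometer factor $\pi$, Dekking's height-one eigenvalue computation identifies $\Z_q$ as the Kronecker factor, and pure discreteness is equivalent to $\pi$ being an isomorphism mod $0$. The column-map formalism, the identification of a.e.\ fiber with the set of consistent towers, the submultiplicativity of $1-c_k$ (together with the monotonicity $c_{k+1}\ge c_k$, which you should state explicitly since it is what makes $\sum_k(1-c_k)<\infty$ follow from decay along the subsequence $mk_0$), and the Borel--Cantelli argument are all correct, and they do yield that a.e.\ fiber is a singleton when a coincidence exists.

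The genuine gap is the final inference of the \emph{only if} direction. From ``no coincidence'' you correctly conclude that $|S(z)|\ge 2$, hence every fiber of $\pi$ contains at least two points; but the step ``every fiber has $\ge 2$ points $\Rightarrow$ $\pi$ is not an isomorphism mod $0$ $\Rightarrow$ the spectrum is not purely discrete'' is not a valid implication. Measure-theoretic isomorphism via $\pi$ is equivalent to the conditional measures $\mu_z$ in the disintegration $\wt{\mu}=\int \mu_z\,d\nu(z)$ over Haar measure being point masses for a.e.\ $z$, and the cardinality of topological fibers gives no control on $\mu_z$: there exist minimal, uniquely ergodic systems whose factor map onto the maximal equicontinuous factor is a measure isomorphism even though \emph{no} fiber is a singleton (isomorphic but not almost one-to-one extensions), so this step genuinely needs an argument and not just the phrase ``nowhere injective.'' Your setup does deliver the missing argument with one more computation, which is in essence Dekking's: since $\mu(T^{s}\zeta^m([b]))=q^{-m}\mu([b])$, the conditional distribution of the level-$M$ symbol $a_M$ given the first $M$ digits of $z$ is the frequency vector $(\mu([b']))_{b'}$, hence the conditional distribution of $x_0=a_0$ given those digits is its push-forward under the column map $f_{M,s_M(z)}$; if no column map is constant, every atom of this push-forward has mass at most $1-\min_b\mu([b])<1$, uniformly in $M$ (the preimage $f_{M,s_M}^{-1}(b)$ must miss some letter, and all letters have frequency $\ge\min_b\mu([b])>0$ by primitivity). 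Passing to the limit $M\to\infty$, $\mu_z$ is not a point mass for a.e.\ $z$, which contradicts a.e.\ injectivity of $\pi$ and hence pure discreteness. With that paragraph inserted, your proof is complete; the parenthetical ``semigroup lemma'' remark is not needed and does not substitute for it.
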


\begin{example}
{\em Period-doubling substitution:} {\em $\zeta(0) = 01, \zeta(1) = 00$. There is a coincidence already for $k=1$, so the spectrum is purely discrete; in fact, the substitution automoprhism is 
isomorphic to the shift on the group of 2-adic integers.}
\end{example}

\begin{defi} \label{def:bijective}
{\em A substitution $\zeta$ of constant length $q$ on an alphabet $\Ak$ is called {\em bijective} if there are $q$ permutations of $\Ak$, denoted $\pi_1,\ldots,\pi_q$, such that
$$
\zeta(\alpha) = \pi_1(\alpha)\ldots\pi_q(\alpha),\ \ \alpha\in \A.
$$
{ If, in addition, the subgroup of the symmetric group ${\frak S}_m$, where $m=\#\Ak$,
 generated by the permutations $\pi_j$ is Abelian, the substitution is called {\em Abelian bijective}.}
}
\end{defi}

{ Of course, all bijective substitutions on 2 symbols are Abelian bijective, but this is rare for $m>2$.}
For a bijective substitution there is {\em never} a coincidence, hence the spectrum is mixed. The Thue-Morse substitution is bijective, another example is
$\zeta:\ 0 \mapsto 010,\ 1\mapsto 122,\ 2\mapsto 201$.

\subsubsection{Substitutions of non-constant length}

Non-constant length substitutions may be weakly mixing, i.e., have no non-trivial discrete component of the spectrum; 
this mostly depends on  algebraic and number-theoretic properties of the substitution matrix, see \cite{Host,Liv1,FMN}.
The general criterion is too involved to state here; instead we discuss the best known special case.

\begin{defi} \label{def-Pisot}
A substitution $\zeta$ is called an {\em irreducible Pisot substitution} if the characteristic polynomial of $\Sf_\zeta$ is irreducible over $\Q$ and all the eigenvalues of $\Sf_\zeta$, except the dominant one, lie inside the unit circle. In particular, this dominant PF eigenvalue is a Pisot  number, that is, an algebraic integer whose Galois 
conjugates are all less than one in modulus.
\end{defi}

The best known examples are the Fibonacci substitution from Example~\ref{ex2} and more generally, the so-called ``multinacci substitutions'' $0\to 01,\ 1\to 02,\ldots,m-1\to 0m,\ m\to 0$, for
$m\ge 2$,
{ in which case the substitution automorphism is isomorphic to an irrational translation on the $(m-1)$-dimensional torus \cite{SolAdic,SolSubs}}.

An irreducible Pisot substitution automorphism on $m$ symbols factors onto an irrational translation on the torus $\T^{m-1}$, so it has a large discrete component. The famous {\em Pisot discrete
spectrum conjecture} claims that the spectrum is purely discrete; it is still open as of this writing. Among early contributors (and apparently, authors of the conjecture) are G. Rauzy \cite{Rauzy2},
B. Host \cite{Host}, and A. N. Livshits (independently, in the framework of adic transformations). 
Livshits \cite{Liv0,Liv2} introduced a {\em balanced block algorithm} which made it possible to verify the conjecture in many cases.
It was completely settled only for 2 symbols \cite{BD,HS}. For more details on this, see \cite{ABBLS}.


\section{Spectral measures as Riesz products} \label{sec:Riesz}
The first part of this section is based on \cite{BuSo14}.
Our aim  is to represent the spectral measure of a substitution dynamical system via a matrix analog of Riesz products.
Various classes of generalized Riesz products have
appeared in the spectral theory of measure-preserving transformations, see e.g.\ \cite{Led,Bourgain1,ChoNa,KleRe,DoEig,Abda}, 
but for substitution dynamical systems it becomes necessary to consider {\it matrix analogues } of Riesz products. This was done by Queffelec \cite[Chapter 8]{Queff} for constant-length substitutions; here we consider the general case.
The following is standard.

\begin{lemma}\label{lem-spec1} Let $T$ be an automorphism of a measure space $(X,T,\mu)$ and $U= U_T$.
For any $f,g\in L^2(X,\mu)$ we have
$$
\sig_{f,g}= \mbox{\rm weak*-}\lim_{N\to \infty} \frac{1}{N} \left\langle \sum_{n=0}^{N-1} e^{-2 \pi i n \om}U^n f, \sum_{n=0}^{N-1} e^{-2 \pi i n \om}U^n g\right\rangle\,d\om,
$$
where in the right-hand side we consider the weak*-limit of absolutely continuous measures with the given density.
\end{lemma}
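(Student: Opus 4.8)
The plan is to compute explicitly the absolutely continuous density appearing on the right-hand side, call it $D_N(\om)$, and to recognize it as the $N$-th Fej\'er (Ces\`aro) mean of the Fourier--Stieltjes series of $\sig_{f,g}$. Weak*-convergence then follows from the classical fact that convolution with the Fej\'er kernel is an approximate identity, so the only genuinely dynamical input is the unitarity of $U$.

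First I would expand the inner product. Writing the density as
$$D_N(\om) = \frac1N \Bigl\langle \sum_{n=0}^{N-1} e^{-2\pi i n\om} U^n f,\ \sum_{m=0}^{N-1} e^{-2\pi i m\om} U^m g\Bigr\rangle = \frac1N \sum_{n=0}^{N-1}\sum_{m=0}^{N-1} e^{-2\pi i(n-m)\om}\,\langle U^n f, U^m g\rangle,$$
and using that $U$ is unitary, so that $\langle U^n f, U^m g\rangle = \langle U^{n-m} f, g\rangle$, the double sum collapses onto the single index $k = n-m$. For each $k$ with $|k| < N$ there are exactly $N - |k|$ pairs $(n,m)$ in $\{0,\dots,N-1\}^2$ with $n - m = k$, whence
$$D_N(\om) = \sum_{|k| < N}\Bigl(1 - \frac{|k|}{N}\Bigr)\,\langle U^k f, g\rangle\, e^{-2\pi i k\om}.$$

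Next I would match this with the spectral measure. By the defining relation $\langle U^k f, g\rangle = \widehat{\sig}_{f,g}(-k)$, the coefficient of $e^{-2\pi i k\om}$ above is $(1-|k|/N)$ times a Fourier coefficient of $\sig_{f,g}$. A direct computation then identifies $D_N$ with the convolution $\sig_{f,g} * K_N$, where $K_N(\om) = \sum_{|k|<N}(1-|k|/N)e^{2\pi i k\om}$ is the Fej\'er kernel: substituting $k\mapsto -k$ in the last display and writing $\widehat{\sig}_{f,g}(k)=\int e^{-2\pi i k t}\,d\sig_{f,g}(t)$ reproduces exactly $(\sig_{f,g}*K_N)(\om)$.

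Finally, since $K_N \ge 0$, $\int_0^1 K_N(\om)\,d\om = 1$, and $K_N$ concentrates at the origin, the family $(K_N)$ is an approximate identity; hence $\vphi * K_N \to \vphi$ uniformly for every continuous $\vphi$ on $\T$, and therefore $\int \vphi\, d(\sig_{f,g}*K_N) = \int (\vphi * K_N)\, d\sig_{f,g} \to \int \vphi\, d\sig_{f,g}$, which is precisely the asserted weak*-convergence $D_N(\om)\,d\om \to \sig_{f,g}$. I do not expect a real obstacle here, consistent with the lemma being labelled standard: the substance is the unitarity-driven reduction of the double sum together with classical Fej\'er approximation. The only point requiring mild care is that $\sig_{f,g}$ is a complex (rather than positive) measure, so the last step should be phrased through the approximate-identity action on test functions rather than through positivity.
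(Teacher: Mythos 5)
Your proof is correct and takes essentially the same route as the paper: both rest on the unitarity identity $\langle U^n f, U^m g\rangle = \langle U^{n-m}f,g\rangle$ to collapse the double sum, after which the density is recognized as the Fej\'er (Ces\`aro) mean of the Fourier--Stieltjes series of $\sigma_{f,g}$. The only difference is cosmetic: the paper finishes by noting that the Fourier coefficients of the absolutely continuous measures converge to $\widehat{\sigma}_{f,g}(k)$ and says the claim ``follows easily,'' whereas you make that last step explicit via the approximate-identity property of the Fej\'er kernel acting on test functions.
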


\begin{proof}
It is enough to check that the Fourier coefficients of the absolutely continuous measures in the right-hand side converge to $\widehat{\sig}_{f,g}(k)$ for all $k\in\Z$. Note that
$$\langle e^{-2\pi i n \om}U^n f, e^{-2\pi i \ell\om}  U^\ell g\rangle = \langle U^{n-\ell} f, g \rangle e^{-2\pi i (n-\ell)\om},$$ and $(e^{-2\pi i n\om} d\om)\,\widehat{\ }\,(-k) = \delta_{k,n}$, and the claim follows easily.
\end{proof}

The lemma highlights the role of {\em twisted Birkhoff sums}: for $x\in X$ and $f\in L^2(X,\mu)$ let
\be \label{def-SN}
S_N^x(f,\om) = \sum_{n=0}^{N-1}e^{-2\pi i n\om} f(T^n x),
\ee
and
\be \label{def-G}
G_N(f,\om) = N^{-1} {\left\| S_N^x(f,\om)\right\|}_2^2,
\ee
and note that 
$
\sig_f =  \mbox{\rm weak*-}\lim_{N\to \infty} G_N(f,\om)\, d\om
$
by Lemma~\ref{lem-spec1}. A direct computation shows that
\be \label{Fejer}
G_N(f,\om) = \int_0^1 K_{N-1}(\om-\tau)\,d\sig_f(\tau),\ \ \mbox{where}\ \ K_{N-1}(\om) = \frac{1}{N} \Bigl(\frac{\sin(N\pi \om)}{\sin \pi \om}\Bigr)^2
\ee
is the Fej\'er kernel.
We would like to relate the growth of $G_N(f,\om)$ as $N\to \infty$ to the local spectral properties of $\sig_f$ at $\om$. 
Based on a heuristic argument, it was suggested in the physics literature, e.g., \cite{Aubry},
that the scaling properties of $G_N(f,\om)$ can detect singular continuous spectrum. Some rigorous results in this direction
were obtained by Hof \cite{Hof}.
Notice that
$
\sig_f(\{\om\}) = \lim_{N\to \infty} N^{-1} G_N(f,\om)
$
by Wiener's Lemma (see e.g., \cite[p.42]{Katznelson}) and $\lim_{N\to \infty} G_N(f,\om) = \frac{d\sig_f}{d\om}$ Lebesgue-a.e., 
where $\frac{d\sig_f}{d\om}$ is the density of the absolutely continuous part of
$\sig_f$. In the intermediate range of growth the following elementary estimate can be useful, which is immediate from \eqref{Fejer}.

\begin{lemma} [{\cite{Hof,BuSo14}}]
For all $\om\in[0,1)$ and $r\in (0,\half]$,
\be \label{eq-estim1}
\sig_f(B(\om,r))\le \frac{\pi^2}{4N} G_N(f,\om),\ \ \ \mbox{where}\ \ N = \lfloor (2r)^{-1}\rfloor.
\ee
\end{lemma}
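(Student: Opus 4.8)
The plan is to start from the Fej\'er-kernel representation \eqref{Fejer}, namely
\[
G_N(f,\om) = \int_0^1 K_{N-1}(\om-\tau)\,d\sig_f(\tau),
\]
and simply discard all of the integral except the contribution of the ball $B(\om,r)$. Since $K_{N-1}\ge 0$ and $\sig_f$ is a positive measure, restricting the domain of integration only decreases the integral, so
\[
G_N(f,\om) \ge \int_{B(\om,r)} K_{N-1}(\om-\tau)\,d\sig_f(\tau).
\]
The next step is to bound $K_{N-1}(\om-\tau)$ from below by a positive constant, uniformly over $\tau\in B(\om,r)$, for the specified choice $N=\lfloor (2r)^{-1}\rfloor$. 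Writing $s=\om-\tau$ with $|s|\le r$, I would use the elementary inequalities $\sin(\pi s)\le \pi s$ (in fact $|\sin\pi s|\le \pi |s|$) in the denominator and a lower bound $|\sin(N\pi s)|\ge \tfrac{2}{\pi}\, N\pi |s| = 2N|s|$ valid on the range where $N\pi|s|\le \pi/2$, i.e.\ $|s|\le \tfrac{1}{2N}$, coming from concavity of $\sin$ on $[0,\pi/2]$.

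The key is that the choice $N=\lfloor(2r)^{-1}\rfloor$ guarantees $N|s|\le N r\le \tfrac12$, so indeed $N\pi|s|\le \pi/2$ and the lower bound on the numerator applies throughout $B(\om,r)$. Combining the two inequalities gives, for every $\tau\in B(\om,r)$,
\[
K_{N-1}(\om-\tau) = \frac{1}{N}\Bigl(\frac{\sin(N\pi s)}{\sin\pi s}\Bigr)^2
\ge \frac{1}{N}\Bigl(\frac{2N|s|}{\pi |s|}\Bigr)^2 = \frac{4N}{\pi^2},
\]
a bound that is conveniently independent of $\tau$. Substituting this into the inequality above yields
\[
G_N(f,\om)\ \ge\ \frac{4N}{\pi^2}\,\sig_f\bigl(B(\om,r)\bigr),
\]
which rearranges to exactly \eqref{eq-estim1}. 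One should double-check the harmless index shift ($K_{N-1}$ involves $N$ in its formula, so the estimate applies directly) and confirm that the case $s=0$ causes no trouble, since there $K_{N-1}(0)=N\ge 4N/\pi^2$.

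The only genuinely delicate point is the lower bound $|\sin(N\pi s)|\ge 2N|s|$ on the critical range, which must hold all the way out to $|s|=r$; this is precisely why $N$ is taken as the floor of $(2r)^{-1}$ rather than a larger value, keeping the argument of the sine below $\pi/2$ so that $\sin$ stays above its chord. I do not expect any serious obstacle here—the whole statement is, as the authors note, ``immediate from \eqref{Fejer}''—but care with the constant $\pi^2/4$ and with the endpoint $|s|=r$ is where a careless version of the argument could lose the sharp constant.
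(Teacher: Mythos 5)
Your proof is correct and is exactly the argument the paper has in mind: the lemma is stated as ``immediate from'' the Fej\'er-kernel representation \eqref{Fejer}, and your fleshing-out---restricting the integral to $B(\om,r)$, bounding the numerator below via $\sin x\ge \tfrac{2}{\pi}x$ on $[0,\pi/2]$ (valid since $N=\lfloor (2r)^{-1}\rfloor$ forces $N|s|\le\tfrac12$) and the denominator above via $|\sin \pi s|\le\pi|s|$, giving $K_{N-1}\ge 4N/\pi^2$ on the ball---is the standard proof from \cite{Hof,BuSo14}. No gaps; the constant $\pi^2/4$ comes out exactly as you computed.
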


\medskip

We will return to similar estimates in the following sections, but now let us
restrict ourselves to substitution automorphisms, where we can benefit from unique ergodicity.
If $X$ is a metric space and $(X,T,\mu)$ is uniquely ergodic, then for all $f,g\in C(X)$ and all $k\in \Z$:
\be
\widehat{\sig}_{f,g}(-k)=\langle U^k f,g\rangle =  \int_X f(T^k x) \ov{g(x)}\,d\mu(x) 
                                                                                                    = \lim_{N\to \infty} \frac{1}{N} \sum_{n=0}^{N-1} 
                                                                                                   f(T^{n+k} x)\ov{g(T^n x)} \label{unerg1},
\ee
where $x\in X$ is arbitrary and the limit is uniform in $x$, see, e.g., \cite[Th.\,4.10]{EW}.

Let $\zeta$ be a primitive aperiodic substitution.
A function $f$ on $X_\zeta$ is  said to be {\em cylindrical} if it depends only on $x_0$, the 0-th term of the sequence $x\in X_\zeta$, that is, 
for some $\varphi:\A\to \C$ we have $f=f_\varphi(x) = \varphi(x_0)$.
Cylindrical functions form a $d$-dimensional vector space, with a basis
$\{\Onne_{[a]}: \, a\in \Ak\}$. Denote
$$
\sig_\a:= \sig_{\Onne_{[\a]}}\ \ \ \mbox{and}\ \ \ \sig_{ab}:= \sig_{\Onne_{[a]},\Onne_{[b]}}.
$$
These measures are also known as {\em correlation measures}. 
In view of (\ref{unerg1}), since primitive substitution dynamical systems are uniquely ergodic,
\be \label{corr1}
\widehat{\sig_{a,b}}(-k) = \lim_{N\to \infty} \frac{1}{|\zeta^N(\gam)|} \,\Card\left\{0 \le n+k < |\zeta^N(\gam)|:\ \zeta^N(\gam)_{n+k} = \a,\ \zeta^N(\gam)_n = b\right\}
\ee
for any $\gam\in \A$. For a word $v= v_0 v_1\ldots\in \A^+$ let
\be \label{def-Phi}
\Phi_\varphi(v,\om) = \sum_{j=0}^{|v|-1} \varphi(v_j) e^{-2\pi i  \om j}.
\ee
We will also write
$
\Phi_\a(v,\om) = \Phi_{\delta_a},
$
where $\delta_a(v_j) = 1$ if $v_j=a$ and  $\delta_a(v_j) = 0$ otherwise. Notice that $\Phi_\varphi$ is a twisted Birkhoff sum of a cylindrical function:
$$
S_N^x(f_\varphi,\om) = \Phi_\varphi\bigl(x[0,N-1],\om\bigr).
$$
It is immediate from (\ref{corr1}), as in Lemma~\ref{lem-spec1}, that
\be \label{corr2}
\sig_{a,b} = \mbox{\rm weak*-}\lim_{N\to \infty} \frac{1}{|\zeta^N(\gam)|}\,{\Phi_a(\zeta^N(\gam),\om)}\cdot\ov{\Phi_b(\zeta^N(\gam),\om)}\,d\om
\ee
and
\be \label{corr222}
\sig_{f_\varphi} = \mbox{\rm weak*-}\lim_{N\to \infty} \frac{1}{|\zeta^N(\gam)|}\,\left|\Phi_\varphi(\zeta^N(\gam),\om)\right|^2d\om.
\ee

\smallskip

Before analyzing the general case, let us consider the Thue-Morse substitution $\zeta$ to get a general idea. Recall that $\zeta_{_{\rm TM}}(0) = 01,\ \zeta_{_{\rm TM}}(1)=10$, then
 $\zeta_{_{\rm TM}}^n(0) = \zeta_{_{\rm TM}}^{n-1}(0)\,\ov{\zeta_{_{\rm TM}}^{n-1}(0)}$ for $n\ge 1$, where we use the convention $\ov{0}=1$, $\ov{1}=0$.
 Let $\varphi(0) = 1, \varphi(1) = -1$. Then we obtain by induction
$$
\Phi_\varphi(\zeta_{_{\rm TM}}^{N+1}(0),\om) = \Phi_\varphi(\zeta_{_{\rm TM}}^N(0),\om) \cdot (1 - e^{-2\pi i \om 2^N}) = 
                                \cdots        = \prod_{n=0}^{N} (1 -  e^{-2\pi i \om 2^n})
$$
Thus, for the Thue-Morse automorphism, the spectral measure is a generalized Riesz product
\be \label{eq-TM1}
\sig_{f_\varphi} = \mbox{\rm weak*-}\lim_{N\to \infty} 2^{-N}\Bigl|\prod_{n=0}^{N-1} (1 -  e^{-2\pi i \om 2^n})\Bigr|^2d\om.
\ee
 
 \smallskip

Returning to the general case, note that for any two words $u,v$ and the concatenated word $uv$,
 \be \label{eq-Phi}
 \Phi_\a(uv,\om) = \Phi_\a(u,\om) + e^{-2\pi i \om |u|} \Phi_\a(v,\om).
 \ee
Suppose that
 $\zeta(b) = u_1^{(b)} \ldots u_{k_b}^{(b)}$ for $b\in \A$. Then $\zeta^{n}(b)= \zeta^{n-1}(u_1^{(b)})\ldots \zeta^{n-1}(u_{k_b}^{(b)})$ for $n\ge 1$, hence (\ref{eq-Phi}) implies for all $b\in \A$:
$$
\Phi_a(\zeta^{n}(b),\om) = \sum_{j=1}^{k_b} \exp\left[-2\pi i \om \left(|\zeta^{n-1}(u_1^{(b)})| + \cdots +|\zeta^{n-1}(u_{j-1}^{(b)})|\right)\right] \Phi_a(\zeta^{n-1}(u_j^{(b)}),\om)
$$
(if $j=1$, the expression reduces to $\exp(0)=1$ by definition).
Let
\be \label{def-Psi}
\vec{\Psi}^{(a)}_{n}(\om):=\left( \begin{array}{c} \Phi_a(\zeta^{n}(1),\om) \\ \vdots \\ \Phi_a(\zeta^{n}(m),\om) \end{array} \right)\ \ \ \mbox{and}\ \ \
\Pbi_n(\om) = [\vec{\Psi}_n^{(1)}(\om),\ldots, \vec{\Psi}_n^{(m)}(\om)],
\ee
where $\Pbi_n(\om)$ is the $m\times m$ matrix-function specified by its column vectors.
It follows that
\be \label{eq-matr1}
\Pbi_n(\om)=\M_{n-1}(\om) \Pbi_{n-1}(\om),\ \ n\ge 1,
\ee
where $\M_{n-1}(\om)$ is an $m\times m$ matrix-function, whose matrix elements  are trigonometric polynomials  given by
\be \label{matr}
(\M_{n-1}(\om))(b,c) = \sum_{j \le k_b:\ u_j^{(b)} = c} \exp\left[-2\pi i \om \left(|\zeta^{n-1}(u_1^{(b)})| + \cdots +|\zeta^{n-1}(u_{j-1}^{(b)})|\right)\right] 
\ee
Note that $\M_n(0) =\Sf^{\sf T}= \Sf_\zeta^{\sf T}$, the transpose of the substitution matrix, for all $n\in \N$. 
Since $\vec{\Psi}_0^{(a)}(\om)= \bbe_a$ (the basis vector corresponding to $a\in \A$), it follows from (\ref{eq-matr1}) that
\be \label{eq-matr2}
\vec{\Psi}_{n}^{(a)}(\om) = \M_{n-1}(\om) \M_{n-2}(\om) \cdots \M_0(\om) \bbe_a,
\ee
hence
\be \label{def-Pbi}
\Pbi_n(\om)= \M_{n-1} (\om)\M_{n-2}(\om)\cdots \M_0(\om).
\ee 
Denote by $\Sigb_\zeta$ the $m\times m$ matrix of correlation measures for $\zeta$; that is, $\Sigb_\zeta(a,b) = \sig_{a,b}$. Below $\one$ denotes the vector $[1,\ldots,1]^t$ of all 1's and
$\langle\vec{x},\vec{y}\rangle$ stands for the scalar product in $\R^m$. We write $x_n\sim y_n$ when $\lim_{n\to \infty} x_n/y_n=1$. The following is a generalization of
\cite[Theorem 8.1]{Queff} by Queffelec to the non-constant length case.

\begin{lemma} \label{lem-Riesz}
Let $\theta$ be the Perron-Frobenius eigenvalue of the substitution matrix $\Sf$ and let $\vec{r},\vec{\ell}$ be respectively the (right) eigenvectors of $\Sf,\Sf^t$ corresponding to $\theta$, normalized by the condition $\langle\vec{r},\vec{\ell}\rangle =1$. Then
\be \label{eq-Sigb}
 \Sigb_\zeta = \frac{1}{\langle\vec{r},\one\rangle \langle\one,\vec{\ell}\rangle}\cdot\mbox{\rm weak*-}\lim_{n\to \infty} \theta^{-n} \ov{\Pbi_n^*(\om) \,\Pbi_n(\om)}\,d\om.
\ee
\end{lemma}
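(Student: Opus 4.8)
The plan is to reduce the matrix identity \eqref{eq-Sigb} to a scalar, entrywise statement and then to combine the single-seed correlation formula \eqref{corr2} with the Perron--Frobenius asymptotics of the word lengths $|\zeta^n(b)|$. First I would read off the entries of the matrix on the right-hand side: from \eqref{def-Psi} the $(b,a)$ entry of $\Pbi_n(\om)$ is $\Phi_a(\zeta^n(b),\om)$, so a direct computation shows that the $(a,a')$ entry of $\overline{\Pbi_n^*(\om)\,\Pbi_n(\om)}$ equals $\sum_{b\in\A}\Phi_a(\zeta^n(b),\om)\,\overline{\Phi_{a'}(\zeta^n(b),\om)}$. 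Hence it suffices to prove, for each fixed pair $a,a'\in\A$, that
\be
\mbox{\rm weak*-}\lim_{n\to\infty}\theta^{-n}\sum_{b\in\A}\Phi_a(\zeta^n(b),\om)\,\overline{\Phi_{a'}(\zeta^n(b),\om)}\,d\om = \langle\vec{r},\one\rangle\,\langle\one,\vec{\ell}\rangle\,\sig_{a,a'}.
\ee

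The key step is to split each summand by its natural normalization,
\be
\theta^{-n}\Phi_a(\zeta^n(b),\om)\,\overline{\Phi_{a'}(\zeta^n(b),\om)}\,d\om = \frac{|\zeta^n(b)|}{\theta^n}\cdot\frac{\Phi_a(\zeta^n(b),\om)\,\overline{\Phi_{a'}(\zeta^n(b),\om)}}{|\zeta^n(b)|}\,d\om,
\ee
and to treat the two factors separately. By \eqref{corr2}, applied with the seed $\gam=b$, the complex measure in the second factor converges weak* to $\sig_{a,a'}$. For the scalar prefactor I would use Perron--Frobenius: writing $|\zeta^n(b)|=\langle\one,\Sf^n\bbe_b\rangle$ and using $\theta^{-n}\Sf^n\to\vec{r}\vec{\ell}^{\,t}$ (valid since $\theta$ strictly dominates the remaining eigenvalues and $\langle\vec{r},\vec{\ell}\rangle=1$), one gets $\theta^{-n}|\zeta^n(b)|\to\langle\vec{r},\one\rangle\,\ell_b$. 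Multiplying a weak*-convergent sequence of complex measures by a convergent sequence of scalars preserves the weak*-limit, so each summand converges weak* to $\langle\vec{r},\one\rangle\,\ell_b\,\sig_{a,a'}$; summing over the finitely many $b\in\A$ and using $\sum_{b}\ell_b=\langle\one,\vec{\ell}\rangle$ gives the displayed identity, and dividing by $\langle\vec{r},\one\rangle\langle\one,\vec{\ell}\rangle$ yields $\Sigb_\zeta(a,a')=\sig_{a,a'}$.

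The computation is essentially routine, so the only genuine point requiring care is the justification of the interchange in the final step: that for the complex measures $\mu_n^{(b)}:=|\zeta^n(b)|^{-1}\Phi_a(\zeta^n(b),\om)\overline{\Phi_{a'}(\zeta^n(b),\om)}\,d\om$ and the scalars $c_n^{(b)}:=\theta^{-n}|\zeta^n(b)|$ one indeed has $\mbox{\rm weak*-}\lim(c_n^{(b)}\mu_n^{(b)})=(\lim c_n^{(b)})\cdot(\mbox{\rm weak*-}\lim\mu_n^{(b)})$, compatibly with the finite sum over $b$. This follows by testing against an arbitrary continuous $g$ on the circle: $\int g\,d(c_n^{(b)}\mu_n^{(b)})=c_n^{(b)}\int g\,d\mu_n^{(b)}\to\langle\vec{r},\one\rangle\ell_b\int g\,d\sig_{a,a'}$, the convergence of $\int g\,d\mu_n^{(b)}$ being precisely \eqref{corr2} with seed $b$. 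I would also record that the total variations stay bounded after the $\theta^{-n}$ scaling (again by Perron--Frobenius), which guarantees that no mass escapes and that the limit is a bona fide matrix of finite complex measures.
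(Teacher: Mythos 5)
Your proof is correct and follows essentially the same route as the paper's: identify the $(a,a')$ entry of $\ov{\Pbi_n^*\Pbi_n}$ as $\sum_b \Phi_a(\zeta^n(b),\om)\ov{\Phi_{a'}(\zeta^n(b),\om)}$, use the Perron--Frobenius asymptotics $\theta^{-n}|\zeta^n(b)|\to\langle\vec r,\one\rangle\langle\bbe_b,\vec\ell\,\rangle$ to renormalize each term, and invoke \eqref{corr2} with seed $\gam=b$ before summing over $b$. Your additional justification of the scalar-times-measure weak* limit and the total-variation bound merely makes explicit what the paper leaves implicit.
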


\begin{proof}
By (\ref{def-Psi}), the $(a,b)$ entry of $\ov{\Pbi_n^*(\om) \Pbi_n(\om)}$ is $\sum_{j=1}^{m} {\Phi_a(\zeta^n(j),\om)}\cdot\ov{\Phi_b(\zeta^n(j),\om)}$.
We have
$$
|\zeta^n(j)| = \langle\Sf^n\bbe_j, \one\rangle \sim \theta^n \langle \bbe_j, \vec{\ell}\rangle  \langle \vec{r}, \one\rangle
$$
by the Perron-Frobenius Theorem, hence
$$
\theta^{-n} \sum_{j=1}^{m} {\Phi_a(\zeta^n(j),\om)}\cdot\ov{\Phi_b(\zeta^n(j),\om)} \sim \sum_{j=1}^{m} \frac{\langle \bbe_j, \vec{\ell}\rangle \langle  \vec{r}, \one\rangle}{|\zeta^n(j)|} \ {\Phi_a(\zeta^n(j),\om)}\cdot\ov{\Phi_b(\zeta^n(j),\om)},
$$
and the desired claim follows from (\ref{corr2}), in view of the fact that  $\one = \sum_{j=1}^{m} \bbe_j$.
\end{proof}


\subsection{Other spectral measures and maximal spectral type} \label{sec:other}

Although it may seem that the cylindrical functions are very special,  they are fairly representative. In fact, any $L^2$ function on $X_\zeta$ can be approximated by functions that depend
on finitely many coordinates, and so it suffices to understand the spectral measures of such in order to obtain the maximal spectral type.
To make this precise, we briefly describe a general construction.
The substitution $\zeta$ can  be extended to $\Ak^\Z$, so that $\zeta(x_0)$ starts from 0-th position. Consider  the sets $\zeta^k[a]$, $a\in \Ak$, $k\ge 0$. It is proved in \cite[5.6.3]{Queff} that the partitions
$$
\Pk_k = \{T^i(\zeta^k[a]),\ a\in \Ak, \ 0 \le i < |\zeta^k(a)|\},\ \ k\ge 0,
$$
generate the Borel $\sig$-algebra on $X_\zeta$.
It follows that the maximal spectral type of $(X_\zeta,T,\mu)$ is equivalent to 
\be \label{eq:maxispec}
\sum_{k\ge 0,\ a\in \Ak} 2^{-k} \sig_{\Onne_{\zeta^k[a]}}.
\ee
The spectral measures $\sig_{\Onne_{\zeta^k[a]}}$ can be analyzed similarly to the correlation measures $\sig_a$. Namely, one can show that the matrix of measures
$
\Sigb_\zeta^{(k)}:= [\sig_{\Onne_{\zeta^k[a]},\Onne_{\zeta^k[b]}}]_{a,b\in \Ak}
$
can be expressed, analogously to (\ref{eq-Sigb}), as
$$
 \Sigb^{(k)}_\zeta = \frac{1}{\langle \vec{r},\one\rangle \langle \one,\vec{\ell}\rangle }\cdot\mbox{\rm weak*-}\lim_{n\to \infty} \theta^{-n} \ov{\bigl(\Pbi_n^{(k)}\bigr)^*(\om) \,\Pbi^{(k)}_n(\om)} \,d\om,
$$
where
$
\Pbi^{(k)}_n(\om) = \M_{n+k-1}(\om)\cdots \M_{k}(\om).
$


\subsection{Singularity of some Riesz products and spectral measures}

Here we will show that the  Thue-Morse substitution automorphism has pure singular spectrum, following the approach of Kakutani \cite{Kaku} { (somewhat implicitly, this was already proved by K. Mahler in 1927 \cite{Mah27})}. In fact, we will do it in a more general case, for
an arbitrary bijective substitution on two symbols. The proof is a modification of an argument of
Baake, G\"ahler, and Grimm \cite{BGG}; see also \cite[10.1]{BG1}.

\begin{prop} \label{prop:singular}
Let $\zeta$ be a primitive bijective substitution on $\{0,1\}$ of length $q$. More precisely, let $\zeta(0) = u_0\ldots u_{q-1}$ and $\zeta(1) = \ov{u}_0\ldots \ov{u}_{q-1}$ where
$\ov{0}=1,\ \ov{1} = 0$. Then the associated substitution automorphism has purely singular spectrum.
\end{prop}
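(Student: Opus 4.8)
My plan is to work on the "odd" (antisymmetric) part of $L^2(X_\zeta,\mu)$, where the continuous spectrum lives, and to show that the relevant spectral measures are generalized Riesz products to which a Kakutani-type dichotomy applies. First I would take the cylindrical test function $f_\varphi$ with $\varphi(0)=1,\ \varphi(1)=-1$, so that $\varphi(\ov\a)=-\varphi(\a)$. Bijectivity in the symmetric form $\zeta(1)=\ov{\zeta(0)}$ yields, by induction, $\Phi_\varphi(\zeta^n(1),\om)=-\Phi_\varphi(\zeta^n(0),\om)$ for all $n$; feeding this into the concatenation identity \eqref{eq-Phi} and using $|\zeta^n(\a)|=q^n$ collapses the matrix recursion \eqref{eq-matr1} to a \emph{scalar} one,
$$
\Phi_\varphi(\zeta^{N}(0),\om)=\prod_{n=0}^{N-1}P(q^n\om),\qquad P(\om):=\sum_{j=0}^{q-1}\varphi(u_j)\,e^{-2\pi i\om j}.
$$
Together with \eqref{corr222} (taking $\gam=0$), this gives the Riesz product representation
$
\sig_{f_\varphi}=\mbox{weak*-}\lim_{N} \prod_{n=0}^{N-1} p_n(\om)\,d\om
$
with $p_n(\om)=q^{-1}|P(q^n\om)|^2$, generalizing \eqref{eq-TM1}.

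Next I would reduce the whole spectral type to this family. The symbol-swap $S:x\mapsto\ov x$ is an automorphism of $(X_\zeta,\mu)$ commuting with $T$ (by symmetry of $\zeta$ and unique ergodicity), so $U_S^2=I$ splits $L^2$ into $S$-invariant and $S$-anti-invariant parts, each $T$-invariant. On the $S$-invariant part the system factors onto its maximal equicontinuous factor, the $q$-adic odometer, hence has pure point (in particular singular) spectrum, consistent with Dekking's description around Theorem~\ref{thm:dekking}. Thus the only thing to prove is that the antisymmetric part carries singular spectrum; by \eqref{eq:maxispec} this amounts to showing that the measures $\sig_{\Onne_{\zeta^k[a]}}$, in their antisymmetric reduction, are singular. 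For a bijective two-symbol substitution these reduce, exactly as above, to products $\prod_{m=k}^{N+k-1}P(q^m\om)$ governed by $\Pbi_n^{(k)}$, so a single singularity criterion, uniform in the shift $k$, will cover them all.

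The core is the singularity of $\mbox{weak*-}\lim_N\prod_{n<N}p_n\,d\om$. I would first record the \emph{dissociation} property: expanding $p_n=1+\sum_{0<|k|<q}q^{-1}\gamma_k e^{2\pi i kq^n\om}$ with $\gamma_k=\sum_j\varphi(u_j)\varphi(u_{j-k})$, the only way to produce the zero frequency in the product $\prod_{n<N}p_n$ is to choose the constant term in every factor (uniqueness of the signed base-$q$ representation of $0$ with digits of modulus $<q$). Hence each partial product $f_N=\prod_{n<N}p_n$ is a probability density, $\int_0^1 f_N\,d\om=1$. Then the Kakutani dichotomy for dissociated Riesz products says $\mu$ is either $\ll$ or $\perp$ Lebesgue, the latter precisely when $\prod_n\big(\int_0^1\sqrt{p_n}\,d\om\big)=0$. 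Here $\int_0^1\sqrt{p_n}\,d\om=q^{-1/2}\int_0^1|P(q^n\om)|\,d\om=q^{-1/2}\int_0^1|P(\om)|\,d\om=:\rho$ is independent of $n$, and by Cauchy--Schwarz $\rho\le q^{-1/2}\big(\int_0^1|P|^2\big)^{1/2}=1$, with equality if and only if $|P|^2\equiv q$. But $|P|^2=q+\sum_{0<|k|<q}\gamma_k e^{2\pi ik\om}$ has top autocorrelation $\gamma_{q-1}=\varphi(u_0)\varphi(u_{q-1})=\pm1\neq0$ for $q\ge2$, so $|P|^2$ is non-constant and $\rho<1$; therefore $\prod_n\rho=0$ and $\mu\perp$ Lebesgue. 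Since this argument is uniform over the shifted products arising from the $\sig_{\Onne_{\zeta^k[a]}}$, the maximal spectral type is singular.

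The main obstacle, as usual for such substitution Riesz products, is justifying the dichotomy itself: unlike classical lacunary products, the frequency blocks of consecutive factors $|P(q^n\om)|^2$ overlap, so the partial products are not a Fourier-stabilizing martingale and one cannot naively factor $\int\sqrt{f_N}=\prod\int\sqrt{p_n}$. The way around this is exactly the dissociation computation above, which lets one model $\mu$ by a genuine infinite product measure on the $q$-adic tree and transfer Kakutani's theorem; this is the point where I would follow, and adapt to the non-constant-length normalization, the argument of Baake, G\"ahler, and Grimm \cite{BGG} (see also \cite[10.1]{BG1}), with the a.e.\ exponential decay $\tfrac1N\log f_N\to\int_0^1\log(|P|^2/q)\,d\om<0$ (Birkhoff plus strict Jensen) providing the complementary heuristic that $\mu$ charges a Lebesgue-null set.
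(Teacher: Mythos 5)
Your reduction coincides with the paper's own proof up to the key lemma: same test function $\varphi(0)=1,\ \varphi(1)=-1$, same scalar collapse to the generalized Riesz product $\sig_{f_\varphi}=\mbox{weak*-}\lim_N\prod_{n<N}p_n(\om)\,d\om$ with $p_n(\om)=q^{-1}|P(q^n\om)|^2$, and the same even/odd splitting to cover the maximal spectral type. The gap is in the core singularity step. The dichotomy you invoke --- singular precisely when $\prod_n\int_0^1\sqrt{p_n}\,d\om=0$ --- is a theorem about Riesz products over \emph{dissociate} frequency sets, i.e.\ sets for which \emph{all} choices of one frequency per factor give distinct sums, and your frequencies are not dissociate: factor $n$ carries frequencies $\{kq^n:|k|\le q-1\}$, and $1\cdot q^0=-(q-1)\cdot q^0+1\cdot q^1$ for every $q\ge 2$. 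What you actually verify (uniqueness of the signed representation of \emph{zero}) is strictly weaker: it yields the normalization $\int_0^1\prod_{n<N}p_n\,d\om=1$, but none of the independence on which Kakutani's theorem runs. You concede this in your last paragraph, but the proposed repair --- modeling the limit by a genuine infinite product measure on the $q$-adic tree --- is not available: $p_n(\om)$ is a function of the \emph{entire} digit tail $\epsilon_{n+1}\epsilon_{n+2}\ldots$ of the base-$q$ expansion of $\om$, so distinct factors depend on overlapping coordinates and there is no product structure to transfer Kakutani's theorem to; nor is this what Baake--G\"ahler--Grimm do (their argument, which the paper adapts, is a renormalization argument). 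Hence ``$\rho<1$, so $\prod_n\rho=0$, so $\nu\perp$ Lebesgue'' is a non sequitur. Your closing Birkhoff--Jensen ``heuristic'' also cannot be promoted to a proof on its own: almost-everywhere decay of the partial products says nothing about the weak* limit --- e.g.\ densities equal to $N^2$ on equidistributed sets of measure $N^{-2}$ (and $0$ elsewhere) tend to $0$ a.e.\ yet converge weak* to Lebesgue measure.

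What actually closes the argument --- and is the paper's route --- is the exact renormalization identity satisfied by the weak* limit: pushing $\nu$ forward under $\om\mapsto q\om$ gives $\nu(\om/q)=(1/q)\,P(\om/q)\cdot\nu(\om)$ (in the paper's notation, where its $P$ is your $q^{-1}|P|^2$), an identity between measures that passes to absolutely continuous parts because push-forward under $\om\mapsto q\om$ and multiplication by a trigonometric polynomial commute with the Lebesgue decomposition. Using $\sum_{j=0}^{q-1}P\bigl(\tfrac{\om+j}{q}\bigr)=q$ --- which is where the constant term $1$ and the constraint that all frequencies have modulus $<q$ enter, i.e.\ exactly the facts you used for normalization --- any a.c.\ solution of \eqref{eq-ac1} must be invariant under the times-$q$ map; by Riemann--Lebesgue such a measure is a multiple of Lebesgue, and Lebesgue fails \eqref{eq-ac1} when $P$ is non-constant, so the a.c.\ part vanishes. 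In this scheme your Hellinger computation $\rho=q^{-1/2}\int_0^1|P|\,d\om<1$, though correct, plays no role: the work is done by the functional equation, not by a product-type affinity criterion.
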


\begin{proof}[Proof sketch] We will prove that the spectral measure $\sig_f$, where $f(x) = (-1)^{x_0}$, is purely singular. As already mentioned, the functions that are even with respect to the
involution $a\mapsto \ov a$ lie in the closed linear span of the eigenfunctions, and the remaining spectral measures generating the measure
 in \eqref{eq:maxispec} may be analyzed exactly as $\sig_f$. { We obtain
\be \label{eq-Riesz0}
\sig_f = \mbox{\rm weak$^*$-}\lim_{n\to \infty} \frac{1}{q^n} \prod_{j=0}^{n-1} \left|F(q^j \om)\right|^2 d\om,\ \ \ \mbox{where}\ \ F(\om) = \sum_{k=0}^{q-1} (-1)^{u_k} e^{-2\pi i k\om},
\ee
where the weak$^*$ convergence can be shown as in \eqref{corr222} and \eqref{eq-TM1}. In fact, weak convergence holds more generally, see \cite[Section 1.3]{Queff}.}
Thus $\sig_f$ has the form \eqref{eq-Riesz} below, with $P(\om) = \frac{1}{q}|F(\om)|^2$, which is a non-constant trigonometric polynomial with a constant term 1. Now singularity of $\sig_f$ follows from
the next lemma.
\end{proof}

\begin{lemma}
Let $q\ge 2$ be an integer and
$$
P(\om) = 1 + \sum_{k = -q+1,\ k\ne 0}^{q-1} a_k e^{-2\pi i k\om},\ \ a_k \in \C,
$$
a non-constant non-negative trigonometric polynomial. Suppose that 
\be \label{eq-Riesz}
\nu = \mbox{\rm weak$^*$-}\lim_{n\to \infty} \prod_{j=0}^{n-1} P(q^j \om) \,d\om,
\ee
where we assume that the weak$^*$ limit of the absolutely continuous measures in the right-hand side exists, and is a positive finite measure on $[0,1]$. Then $\nu$ is purely
singular.
\end{lemma}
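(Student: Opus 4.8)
The plan is to show that $\nu$ is concentrated on a set of Lebesgue measure zero, i.e. $\nu\perp\mathrm{Leb}$; since the hypothesis already provides a positive finite limit, this is exactly the assertion that $\nu$ has no absolutely continuous part. Write $P(\om)=\sum_{|k|\le q-1} a_k e^{-2\pi i k\om}$ with $a_0=1$, and let $P_n(\om)=\prod_{j=0}^{n-1}P(q^j\om)$ be the partial products, so that $\nu=\mbox{weak*-}\lim_n P_n\,d\om$. The frequencies occurring in $P_n$ are the integers $\sum_{j=0}^{n-1}k_j q^j$ with $|k_j|\le q-1$, and the constant term is obtained only from $k_0=\cdots=k_{n-1}=0$: indeed, if $\sum_{j}k_jq^j=0$ with $|k_j|\le q-1$ and $m$ is the least index with $k_m\ne0$, then $q\mid k_m$, forcing $k_m=0$, a contradiction. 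Hence $\int_0^1 P_n\,d\om=a_0^n=1$ for every $n$, so each $P_n\,d\om$ is a probability measure and so is $\nu$. The mechanism behind singularity, following Kakutani, is a strong law of large numbers showing that the densities $P_n$ collapse to $0$ almost everywhere while preserving total mass $1$.

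Next I would prove that $P_n(\om)\to0$ for Lebesgue-a.e. $\om$. Since $P$ is $1$-periodic, $P(q^j\om)=P(R_q^j\om)$ where $R_q:\om\mapsto q\om \bmod 1$ is the times-$q$ map, which preserves Lebesgue measure and is ergodic. By Fej\'er--Riesz, $P(\om)=|g(e^{2\pi i\om})|^2$ for some polynomial $g$, so $\log P=2\log|g|$ is Lebesgue-integrable (its integral is finite by Jensen's formula for $g$), and thus $\log P\in L^1([0,1))$. Birkhoff's pointwise ergodic theorem then gives, for a.e. $\om$,
\be
\frac1n\log P_n(\om)=\frac1n\sum_{j=0}^{n-1}\log P(R_q^j\om)\ \longrightarrow\ \int_0^1\log P\,d\om.
\ee
Because $P$ is non-constant and $\int_0^1 P\,d\om=1$, strict concavity of the logarithm (Jensen's inequality) yields $\int_0^1\log P\,d\om<\log\int_0^1 P\,d\om=0$. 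Hence $\frac1n\log P_n(\om)$ tends to a strictly negative limit, so $P_n(\om)\to0$, in fact exponentially fast, for Lebesgue-a.e. $\om$.

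Finally I would convert the almost-everywhere decay of the densities into singularity of their weak* limit. Let $\nu=g\,d\om+\nu_s$ be the Lebesgue decomposition. Consider the nonnegative martingale $\phi_n(\om)=q^n\,\nu(I_n(\om))$ relative to the filtration generated by the $q$-adic intervals $I_n(\om)$ of length $q^{-n}$ containing $\om$; by the martingale convergence (Lebesgue differentiation) theorem, $\phi_n\to g$ Lebesgue-a.e. Comparing $\nu(I_n(\om))$ with the integral averages $\int_{I_n(\om)}P_m\,d\om$ (using $\nu(I^\circ)\le\liminf_m\int_{I}P_m\,d\om$ from the portmanteau theorem) one shows that $g(\om)\le\liminf_n P_n(\om)=0$ for a.e. $\om$, whence $g=0$ and $\nu=\nu_s\perp\mathrm{Leb}$. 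This last transfer is the technical heart of the argument: the point is precisely that the family $\{P_n\}$ is \emph{not} uniformly integrable --- were it so, $P_n$ would converge to $g$ weakly in $L^1$ and $\nu$ would be absolutely continuous --- so the conserved unit mass is forced to escape onto a Lebesgue-null set. Carrying out this comparison of the double limit in $n$ and $m$ rigorously, together with controlling the $\nu$-measure of the (countably many) $q$-adic endpoints, is the step requiring the most care.
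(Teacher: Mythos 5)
Your first two steps are correct, and they follow a genuinely different route from the paper: the observation that the only representation of $0$ as $\sum_j k_jq^j$ with $|k_j|\le q-1$ is the trivial one (so each $P_n\,d\om$ is a probability density), and the Fej\'er--Riesz/Birkhoff/Jensen argument giving $\frac1n\log P_n(\om)\to\int_0^1\log P\,d\om<0$, hence $P_n(\om)\to 0$ exponentially fast Lebesgue-a.e., are both sound. The gap is in the transfer step, and it is genuine. You assert that comparing $\nu(I_n(\om))$ with $\int_{I_n(\om)}P_m\,d\om$ yields $g(\om)\le\liminf_n P_n(\om)$. It does not. Writing $P_m=P_n\cdot Q_{n,m}$ with $Q_{n,m}(t)=\prod_{j=n}^{m-1}P(q^jt)$, the tail $Q_{n,m}$ is $q^{-n}$-periodic with $\int_{I_n}Q_{n,m}\,dt=q^{-n}$, so what the comparison actually gives (modulo the boundary issues you mention) is
$$
g(\om)\ \le\ \liminf_{n\to\infty}\,\sup_{t\in I_n(\om)}P_n(t).
$$
The supremum cannot be replaced by the value at $\om$ by any soft argument: $P_n$ is a trigonometric polynomial of degree about $q^n$, so it oscillates precisely at the scale $q^{-n}$ of the interval $I_n(\om)$; the preimages of the zeros of $P$ under $t\mapsto q^jt$ make $P_n$ vanish at many points of $I_n(\om)$ while it may be huge at others, and pointwise exponential smallness at $\om$ gives no control of the sup. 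Worse, Fatou's lemma applied to $\int_I\liminf_n P_n\le\liminf_n\int_IP_n$ yields exactly the \emph{opposite} inequality, $\liminf_n P_n(\om)\le g(\om)$ a.e.; so the inequality you need is not formal and must come from extra structure --- which is also the moral of your own uniform-integrability discussion, but that discussion does not supply the missing mechanism.

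The gap is fixable, but it requires one more idea: run the ergodic argument on an upper envelope of $P$ rather than on $P$ itself. Set $P_r(t)=\sup_{|s-t|\le r}P(s)$; by continuity $\log P_r\downarrow\log P$ pointwise as $r\downarrow 0$, sandwiched between $\log P\in L^1$ and $\log\|P\|_\infty$, so $\int_0^1\log P_r\,d\om\to\int_0^1\log P\,d\om<0$, and one may fix $r_0$ with $\int_0^1\log P_{r_0}\,d\om<0$. For $t\in I_n(\om)$ and $j\le n-K$, where $K=\lceil\log_q(1/r_0)\rceil$, one has $P(q^jt)\le P_{r_0}(q^j\om)$, hence $\sup_{I_n(\om)}P_n\le\|P\|_\infty^{K}\prod_{j=0}^{n-K}P_{r_0}(q^j\om)\to0$ a.e.\ by Birkhoff applied to $\log P_{r_0}$; with this your scheme closes. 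For comparison, the paper's proof avoids ergodic theorems entirely: from the weak* limit it derives the renormalization identity $\nu(\om/q)=\frac1q P(\om/q)\,\nu(\om)$, notes (Kakutani's observation) that push-forward under $\om\mapsto q\om$ and multiplication by $P$ both commute with the Lebesgue decomposition, deduces that the absolutely continuous part is invariant under $S_q:\om\mapsto q\om$ (mod $1$) --- using $\sum_{j=0}^{q-1}P\bigl(\frac{\om+j}{q}\bigr)=q$, which is where $\deg P\le q-1$ and $a_0=1$ enter --- hence is a multiple of Lebesgue measure by Riemann--Lebesgue, and finally that Lebesgue measure fails the identity when $P$ is non-constant. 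Your route, once repaired, is longer but yields more (exponential a.e.\ decay of the densities); the paper's is shorter and needs only the constant-term normalization, not ergodicity of the times-$q$ map.
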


\begin{proof}
Observe that \eqref{eq-Riesz} defines a 1-periodic measure on $\R$, since the finite products in the right-hand side are 1-periodic for all $n$. 
Denote by $\nu(\om/q)$ the push-forward measure of $\nu$ under the map
$\om\mapsto q \om$, defined on $\R$. Then we have
\begin{eqnarray}
\nu(\om/q) & = & \mbox{weak$^*$-}\lim_{n\to \infty} \prod_{j=0}^{n-1} P(q^{j-1} \om) \,d(\om/q) \nonumber \\[-2ex]
                 & = & (1/q) P(\om/q) \cdot \mbox{weak$^*$-}\lim_{n\to \infty} \prod_{j=0}^{n-2} P(q^j \om) \,d\om \nonumber \\
                 & = & (1/q) P(\om/q) \cdot \nu(\om). \label{eq-Riesz2}
\end{eqnarray}
Now, following the approach of Kakutani \cite{Kaku}, we can take the absolutely continuous 
 part of both sides of the equation \eqref{eq-Riesz2}, since the push-forward under $\om\mapsto q\om$ and
multiplication by a polynomial, that does not vanish on a set of positive measure, commute with the Lebesgue decomposition. Thus, denoting $\nu_{\rm ac}$ by $\eta$,
\be \label{eq-ac1}
\eta(\om/q) = (1/q) P(\om/q) \cdot\eta(\om).
\ee
The goal is to show that a non-zero absolutely continuous measure cannot satisfy \eqref{eq-ac1}.

First we observe that $\eta$ from \eqref{eq-ac1} must be invariant under the ``times $q$'' map $S_q(\om) = q\om$ (mod 1) on $[0,1]$. Indeed, this invariance is equivalent to 
\be \label{eq-inva}
\eta = \eta\circ S_q^{-1} = \eta\left(\frac{\om}{q}\right) + \eta\left(\frac{\om+1}{q}\right) + \cdots + \eta\left(\frac{\om+ (q-1)}{q}\right).
\ee
Since $\eta$ is 1-periodic, we have from \eqref{eq-ac1} that $\eta(\frac{\om+j}{q}) = \frac{1}{q} P(\frac{\om+j}{q})\eta$. Thus the claimed invariance is equivalent to
$
\sum_{j=0}^{q-1} P\left(\frac{\om+j}{q}\right) = q,
$
which is immediate from the definition of $P$. Another characterization of $S_q$-invariance of $\eta$ is $\what{\eta}(n) = \what{\eta}(qn)$ for $n\in \Z$, which follows from the definition of
the Fourier coefficients. Thus the Lebesgue measure on $[0,1]$ is the only $S_q$-invariant absolutely continuous measure, by the Riemann-Lebesgue Lemma. It remains to observe that the Lebesgue
measure does not satisfy \eqref{eq-ac1}, if $P$ is non-constant.
\end{proof}

We should note that there are several alternative proofs of more general statements. For more on the rich theory of 
generalized Riesz products and how they appear in the spectral theory of dynamical systems, see the book  by Nadkarni \cite[Ch.\,15]{Nad}, the papers by el Abdalaoui and Nadkarni \cite{AbNad},
el Abdalaoui \cite{Ab2}, and references therein. {Weak convergence in \eqref{eq-Riesz} is known, for instance, when $P(\om) = \frac{1}{q} {\Bigl| \sum_{k=0}^{q-1} z_k e^{-2\pi i k\om}\Bigr|}^2$,
with $z_k$ complex, $|z_k|\le 1$, see \cite[Section 1.3]{Queff}.}


\section{Suspension flows and the spectral cocycle}
In order to use the full power of the Riesz matrix product representation of spectral measures, it is useful to 
pass from substitution automorphisms (i.e., $\Z$-actions) to flows (i.e., $\R$-actions).
Let $\zeta$ be a primitive substitution on $\Ak=\{1,\ldots,d\}$, and let $(X_\zeta,T)$ be the corresponding uniquely ergodic $\Z$-action. For a strictly positive vector $\vec{s} = (s_1,\ldots,s_d)$ we consider the {\em suspension flow} over $T$, with the piecewise-constant roof function, equal to $s_j$ on the cylinder set $[j]$. It is defined as follows. Let
$$
\Xxi^{\vec{s}} = \{y=(x,t):\ x\in X_\zeta,\ 0 \le t \le s_{x_0}\}/_\sim\ ,
$$
 where  the relation $\sim$ identifies the points $(x, s_{x_0})$ and $(T (x), 0)$. The measure $\wt{\mu}$ on $\Xxi^{\vec{s}}$ is induced from the product  of 
 $\mu$ on the cylinder sets $[a]\subset X_\zeta$ and the Lebesgue measure; it is invariant under the action $h_\tau(x,t) = (x, t+\tau)$, which is defined for all $\tau\in \R$ using the identification 
 $\sim$.
The resulting space will be denoted by $\Xxi^{\vec{s}}$ and the flow
by $(\Xxi^{\vec{s}},h_t,\wt\mu)$. 

An additional motivation comes from the fact that
these flows can also be viewed as {\em tiling dynamical systems}, with the ``prototiles'' that are closed line segments of length $s_j$ for $j=1,\ldots,d$.
 A tiling of $\R$ is obtained from a pair $(x,t): x\in X_\zeta,\ 0 \le t\le s_{x_0},$ by 
considering a bi-infinite string of line segments labelled $x_j$ of length $s_{x_j}$, $j\in \Z$,
in such a way that the origin is contained in the tile corresponding to $x_0$, at the distance of $t$ from the left
endpoint. The group $\R$ acts on this space by (left) translation, and it is easy to see that this definition is equivalent to the one with the suspension flow.
Such dynamical systems have been studied, e.g., in \cite{BeRa,CS03,SolTil}.
A special role is played by the {\em self-similar suspension flow}, or tiling dynamical
system, corresponding to the choice of $\vec{s}$ as the PF eigenvector of the transpose substitution matrix $\Sf_\zeta^{\sf T}$. Then one can define a {\em geometric} substitution/renormalization action $Z$ on $\Xxi^{\vec{s}}$: dilate
 the original tiling by a factor of the PF eigenvalue $\theta$ and then subdivide the resulting tiles according to the substitution rule. These two actions satisfy the commutation relation
$
Z\circ h_\tau = h_{\theta \tau} \circ Z.
$

Let $f\in L^2(\Xxi^{\vec{s}},\wtil{\mu})$. By the Spectral Theorem for measure-preserving flows, there is a finite positive Borel measure $\sig_f$ on $\R$ such that
$$
\int_{-\infty}^\infty e^{2 \pi i\om \tau}\,d\sig_f(\om) = \langle f\circ h_\tau, f\rangle\ \ \ \mbox{for}\ \tau\in \R.
$$


In general, there is no simple relation between the spectrum of a $\Z$-action on the circle and the spectrum of the suspension flow, except in the special case where the roof function is constant.
(Note that the constant roof function suspension is self-similar if and only if the substitution is of constant length.) The following lemma was proved in \cite{BerSol}; it is a simple
consequence of \cite{Goldberg}.

\begin{lemma}    \label{lem-equivalence}
Let $(X,T,\mu)$ be an invertible probability-preserving system, and let $(\wt{X}, h_\tau, \wt{\mu})$ be the suspension flow with the constant-one roof function.
For  $f\in L^2(X,\mu)$ consider the
function $F\in L^2(\wt{X},\wt{\mu})$  defined by
$F(x,t)=f(x)$. Then
the following relation holds between the spectral measures $\sigma_F$  on $\R$ and $\sigma_f$ on $\T$:
\[
d\sig_F(\om)=\left(\frac{\sin(\pi \om)}{\pi \om}\right)^2 \cdot d\sig_f(e^{2\pi i \om}),\ \ \om\in \R.
\]
\end{lemma}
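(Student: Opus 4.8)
The plan is to compute the spectral measure $\sigma_F$ of the suspension function $F(x,t) = f(x)$ directly from its defining correlation integral, and then recognize that the resulting expression factors as an explicit kernel times the spectral measure $\sigma_f$ of the base system. The key observation is that because the roof function is constant equal to $1$, the suspension space is simply $\wt{X} = X \times [0,1]/\!\sim$ with $\wt\mu = \mu \times \Leb$, and the flow $h_\tau(x,t) = (x, t+\tau)$ advances the clock by $\tau$, wrapping around by applying $T$ each time the clock crosses an integer. Thus translating by $\tau$ and landing back at fiber-coordinate $t$ requires applying $T^{\lfloor t+\tau\rfloor}$ to $x$, and this bookkeeping is what produces the sinc-squared factor.

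First I would write out the autocorrelation $\langle F\circ h_\tau, F\rangle = \int_{\wt X} F(h_\tau(x,t))\,\overline{F(x,t)}\,d\wt\mu$. Since $F$ ignores the clock coordinate, $F(h_\tau(x,t)) = f(T^{\lfloor t+\tau\rfloor}x)$, so the integral over $t\in[0,1)$ splits according to which integer window $t+\tau$ falls into. Integrating over $t$ and summing over the integer jumps, one finds that $\langle F\circ h_\tau, F\rangle$ is a convex-combination-type average of the base correlations $\langle U_T^k f, f\rangle = \what\sigma_f(-k)$, weighted by the overlap lengths of the interval $[t,t+\tau)$ with successive unit windows. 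Concretely this gives a triangular (tent) weighting in $\tau$ against the integer correlations.

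The heart of the matter is then a Fourier-analytic identity: I would substitute the representation $\what\sigma_f(-k) = \int_{\T} e^{2\pi i k\lam}\,d\sigma_f(\lam)$ into the expression for $\langle F\circ h_\tau,F\rangle$, interchange sum and integral, and carry out the summation over $k$ of the tent-weighted exponentials. This collapses (via the standard identity $\sum_k (\text{tent weights})\,e^{2\pi i k\lam}$, which is exactly the Fourier transform of the triangle function, i.e.\ the Fej\'er/sinc-squared kernel) into a clean formula $\langle F\circ h_\tau, F\rangle = \int_\R e^{2\pi i \om\tau}\bigl(\tfrac{\sin\pi\om}{\pi\om}\bigr)^2 d\sigma_f(e^{2\pi i\om})$, where $\sigma_f(e^{2\pi i\om})$ denotes the pullback of the circle measure $\sigma_f$ under $\om\mapsto e^{2\pi i\om}$, spread over all of $\R$. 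By uniqueness in the Spectral Theorem for flows (the measure $\sigma_F$ on $\R$ is determined by its Fourier transform $\tau\mapsto\langle F\circ h_\tau,F\rangle$), comparing with the defining equation yields the claimed identity $d\sigma_F(\om) = \bigl(\tfrac{\sin\pi\om}{\pi\om}\bigr)^2 d\sigma_f(e^{2\pi i\om})$.

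The main obstacle I anticipate is purely bookkeeping rather than conceptual: tracking the fractional and integer parts of $t+\tau$ carefully so that the tent-weight coefficients come out correctly, and justifying the interchange of the (infinite) sum over $k$ with the integral against $\sigma_f$. The latter is routine since $\sigma_f$ is a finite measure and the tent weights are summable, but one must be slightly careful to handle $\tau$ of either sign and to confirm that the resulting kernel is precisely $\operatorname{sinc}^2$ and not merely proportional to it—checking the normalization at $\tau = 0$ (where both sides equal $\|f\|_2^2 = \sigma_f(\T)$, consistent with $\int_\R (\sin\pi\om/\pi\om)^2 d\om = 1$ times the total mass) provides a clean sanity check. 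Alternatively, as the statement notes, this whole computation is encapsulated in the cited result of Goldberg, so the cleanest route is to invoke \cite{Goldberg} for the kernel and merely verify that our normalization of $\sigma_f$ and the flow matches the hypotheses there.
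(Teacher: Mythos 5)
Your proof is correct, but it is worth noting that the paper itself contains no argument for this lemma: it simply defers to \cite{BerSol}, which in turn rests on Goldberg's theorem \cite{Goldberg}. What you have written out is exactly the content hidden in those citations, so your route is the self-contained one. The dynamical step --- that for the constant-one roof and a fiber-constant $F$ one has $\langle F\circ h_\tau,F\rangle=\sum_{k\in\Z}\Lambda(\tau-k)\,\widehat{\sig}_f(-k)$, where $\Lambda(\tau)=\max(0,1-|\tau|)$ is the tent function, i.e.\ the flow correlation is the piecewise-linear interpolation of the discrete correlations --- is the reduction made in \cite{BerSol}; the Fourier-analytic step --- that linear interpolation of a positive-definite sequence corresponds to multiplying the periodized spectral measure by $\widehat{\Lambda}(\om)=\bigl(\sin\pi\om/\pi\om\bigr)^2$ --- is precisely Goldberg's lemma, which you re-derive rather than quote. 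Two details to tighten in a full write-up. First, the ``standard identity'' you invoke is really the Poisson-summation identity $\sum_{k}\Lambda(\tau-k)e^{2\pi i k\lam}=\sum_{m}\widehat{\Lambda}(\lam+m)e^{2\pi i(\lam+m)\tau}$, valid pointwise because $\Lambda$ is continuous with compact support and $\widehat{\Lambda}$ is integrable; it is not literally just the statement that the tent has Fourier transform $\mathrm{sinc}^2$. Second, your normalization check at $\tau=0$ cites $\int_\R(\sin\pi\om/\pi\om)^2\,d\om=1$, but the identity actually needed is the sum over integer translates, $\sum_{k\in\Z}\bigl(\sin\pi(\om+k)/\pi(\om+k)\bigr)^2\equiv 1$ (equivalently $\sum_{k}(\om+k)^{-2}=\pi^2/\sin^2\pi\om$): this is what shows $\sig_F(\R)=\sig_f(\T)=\|f\|_2^2$ and also what justifies, by dominated convergence, the interchange of the sum over $k$ with integration against the finite measure $\sig_f$. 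With these two points made precise, uniqueness of Fourier transforms of finite measures on $\R$ completes your argument exactly as you say.
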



The next construction extends the considerations of Section~\ref{sec:Riesz} to suspension flows. 

\begin{defi} Let $\vec s\in \R^d_+$.
For a word $v$ in the alphabet $\Ak$ denote by $\vec{\ell}(v)\in \Z^d$ its ``population vector'' whose $j$-th entry is the number of $j$'s in $v$, for $j\le d$. The
``tiling length'' of $v$ is defined by 
\be \label{tilength}
|v|_{\vec{s}}:= \langle\vec{\ell}(v), \vec{s}\rangle.
\ee
\end{defi}

The analog of the exponential sum in \eqref{def-Phi}, with $\varphi = \delta_a$, is
\be \label{def-Phi3}
\Phi_a^{\vec{s}}(v,\om) = \sum_{j=0}^{|v|-1} \delta_a(v_j) \exp(-2\pi i \om |v_0\ldots v_j|_{\vec{s}}).
\ee
{\em Lip-cylindrical functions} for a suspension flow are defined by
\be \label{eq-cylinder}
f(x,t) = \sum_{j\in \A} \Onne_{[j]} \cdot \psi_j(t),\ \ \ \mbox{where}\ \ \psi_j \ \mbox{is Lipschitz on}\ [0,s_j].
\ee
 A generalization of \eqref{corr2} and \eqref{corr222} is 
\be \label{spect}
\sig_{f} = \mbox{\rm weak*-}\lim_{N\to \infty} \frac{1}{|\zeta^N(\gam)|_{\vec{s}}}\,\sum_{a,b\in \A} \what\psi_a(\om)\ov{\what{\psi}_b(\om)}\cdot\Phi^{\vec{s}}_a(\zeta^N(\gam),\om)\,\ov{\Phi^{\vec{s}}_b(\zeta^N(\gam),\om)}\, d\om,
\ee
where the right-hand side represents the weak$^*$ limit of a sequence of absolutely continuous measures on $\R$. 


\subsection{Digression: quasicrystals and diffraction spectrum} \label{sec:diffract}

In the mathematical physics of quasicrystals and the theory of ``aperiodic order'', one of the basic notions is {\em diffraction spectrum}.
We  sketch the definition briefly, following the approach of Hof \cite{Hof95,Hof97}; see also \cite[Chapter 9]{BaGr}. 

Let $\Lam$ be a {\em Delone set} in $\R^n$, that is, a uniformly discrete, relatively dense set (for the systems treated in this paper, $n=1$, but the definition is the same in any dimension).
 The so-called ``Dirac comb'' measure $\delta_\Lam:= \sum_{x\in \Lam}\delta_x$
is a simple model of an infinite configuration of atoms, which in turn is a model of a solid. Roughly speaking, the diffraction spectrum is described by the 
{\em diffraction measure}, which is the Fourier transform of an autocorrelation. More precisely, let $B_R$ be the ball of radius $R$ in $\R^n$. Consider the volume 
averaged convolution
$$
\gam^{(R)}_\Lam:= \frac{\delta_\Lam|_{B_R} * \wt\delta_\Lam|_{B_R}}{{\rm vol} (B_R)},
$$
where $\wt \nu(x) = \ov{{\nu}(-x)}$.
Every weak$^*$ accumulation point of $\gam^{(R)}_\Lam$, as $R\to \infty$, is called an autocorrelation measure. By construction, it is a positive definite measure, hence its
Fourier transform is a positive measure. Under some natural assumptions (i.e., when the point set $\Lam$ arises from a primitive substitution), there is a unique autocorrelation
$\gam_\Lam = \mbox{\rm weak*-}\lim_{R\to \infty} \gam_\Lam^{(R)}$. The {\em diffraction spectrum measure} is, by definition,
$$
\Sig_{\rm diffr}(\Lam) = \what{\gam_\Lam}.
$$
This diffraction spectrum is closely related to the dynamical spectrum considered in this paper. In particular, it easily follows from \eqref{spect} that
$$
\sig_{f_a} =  |\what{\psi}_a|^2\cdot \Sig_{\rm diffr}(\Lam_a),\ \ \ \mbox{where}\ \ f_a = \Onne_{[a]}\cdot \psi_a,
$$
and $\Sig_{\rm diffr}(\Lam_a)$ is the diffraction spectrum measure, corresponding to 
 the set $\Lam_a\subset \R$ of the left endpoints of $a$-tiles in the 1-dimensional substitution tiling. Such a connection (also for more general tilings and point systems in $\R^n$)
 was first established
by Dworkin \cite{Dworkin}, see also \cite{Hof95,Hof97}. 

\subsection{Definition of the spectral cocycle}

The formula \eqref{spect} and the matrix Riesz products of Section 3 suggest the following  {\em cocycle} construction, introduced in \cite{BuSo20}.

\smallskip

Write $\bz = (z_1,\ldots,z_{d})$ and $\bz^v = z_{v_0}z_{v_1}\ldots z_{v_k}$ for a word $v = v_0v_1\ldots v_k\in \A^k$.
As above, suppose that
$\zeta(b) = u_1^{b}\ldots u_{k_b}^{b},\ \ b\in \Ak.$
Define a matrix-function on $\T^d$ whose entries are polynomials in $\bz$-variables:
$$
\Mc_\zeta(\bz) = [\Mc_\zeta(z_1,\ldots,z_{d})]_{b,c} = \Bigl( \sum_{j\le k_b,\ u_j^{b} = c} \bz^{u^b_1\ldots u^b_{j-1}} \Bigr)_{(b,c)\in \A^2},\ \ \ \bz\in \T^d,
$$
where $j=1$ corresponds to $\bz^{\es}=1$. 

Whereas the $\bz$-notation is helpful, we will actually need to lift $\Mc_\zeta$ to the universal cover, in other words, write
$\bz = \exp(-2 \pi i \bxi)$, where $\xi = (\xi_1,\ldots,\xi_{d})$. Thus we obtain a $\Z^d$-periodic matrix-valued function
 function, which we denote by the same letter.
 
 \begin{defi} \label{def-matrix} Let
  $\Cc_\zeta:  \R^d\to M_d(\C)$  (the space of complex $d\times d$ matrices): 
 \be \label{coc0}
\Cc_\zeta(\xi) = [\Cc_\zeta(\xi_1\ldots,\xi_d)]_{(b,c)} := \Bigl( \sum_{j\le k_b,\ u_j^{b} = c} \exp\bigl(-2\pi i \sum_{k=1}^{j-1} \xi_{u_k^{b}}\bigr)\Bigr)_{(b,c)\in \A^2},\ \ \ \xi\in \R^d.
\ee
\end{defi}

\noindent {\bf Example.}
Consider the substitution $\zeta: 1\mapsto 1112,\ 2\mapsto 123,\ 3 \mapsto 2$. Then
$$
\Mc_{\zeta}(z_1,z_2,z_3) = \left( \begin{array}{ccc} 1 + z_1 + z_1^2 & z_1^3 & 0 \\ 1 & z_1 & z_1 z_2 \\ 0 & 1 & 0 \end{array} \right), \ \ \ z_j = e^{2\pi i \xi_j}.
$$

\medskip

Observe that
$\Mc_\zeta(0) = \Sf_\zeta^{\sf T}$;
the entries of the matrix $\Mc_\zeta(\xi)$ are trigonometric polynomials with coefficients 0's and 1's. The entries of $\Mc_\zeta(\xi)$ are less than or equal to 
 the corresponding entries of $\Sf_\zeta^{\sf T}$
 in absolute value for every $\bz\in \T^d$.
Crucially, the following
{\em cocycle condition} holds:
 for any substitutions $\zeta_1,\zeta_2$ on $\Ak$,
\be \label{most}
\Cc_{\zeta_1\circ \zeta_2}(\xi) = \Cc_{\zeta_2}(\Sf^{\sf T}_{\zeta_1}\xi)\Cc_{\zeta_1}(\xi),
\ee
which is verified by a direct computation.

\begin{defi}
Suppose that $\det(\Sf_\zeta)\ne 0$ and consider the  endomorphism of the torus $\T^d$
\begin{equation}\label{torend}
\xi \mapsto \Sf_\zeta^{\sf T} \xi \  (\mathrm{mod} \   \Z^d),
\end{equation} 
 which preserves the Haar measure $m_d$. Then
 \be \label{cocycle3}
\Cc_\zeta(\xi,n):= \Cc_\zeta\bigl((\Sf_\zeta^{\sf T})^{n-1}\xi \bigr)\cdot \ldots \cdot \Cc_\zeta(\xi),
\ee
  is called the {\em spectral cocycle}, associated to $\zeta$, over the endomorphism \eqref{torend}.
 \end{defi}
 
  Note that (\ref{most}) implies
\be \label{coc2}
\Mc_\zeta(\xi,n) =\Cc_{\zeta^n}(\xi),\ \ n\in \N,
\ee
and the matrix defined in \eqref{matr} satisfies $\M_n(\om) = \Cc_\zeta(\om\one,n)$. It will be useful to rewrite the formula for $\Cc_{\zeta^n}(\xi)$ differently, emphasizing the connection with \eqref{def-Phi3} and \eqref{matr}. For $\xi = \om\vec s$, we can represent the matrix elements of $\Cc_\zeta(\om\vec s)$ as
\be \label{cu1}
\bigl[\Cc_\zeta(\om\vec s)\bigr]_{(b,c)} = \sum_{j\le k_b, \, u_j^b = c} \exp\bigl(-2\pi i \om \bigl|p^b_j\bigr|_{\vec s}\bigr)
= \sum_{j\le k_b, \, u_j^b = c} \exp\bigl(-2\pi i \om \bigl\langle \vec{\ell}(p^b_j), {\vec s}\bigr\rangle\bigr),
\ee
where $p^b_j$ is the prefix of the $j$-th symbol (which is $c$) in the word $\zeta(b)$. By the definition of the substitution matrix $\Sf_\zeta$, for any letter and hence for any word 
$v\in \A^+$, holds $\vec\ell(\zeta(v)) = \Sf_\zeta \vec\ell(v)$, so
\be \label{Der}
\bigl\langle \vec{\ell}(v), (\Sf_\zeta^{\sf T})^k {\vec s}\bigr\rangle  = \bigl\langle \Sf_\zeta^k \vec{\ell}(v), {\vec s}\bigr\rangle = 
\bigl\langle \vec{\ell}(\zeta^k(v)), \vec s\bigr\rangle = \bigl| \zeta^k(v) \bigr|_{\vec s}.
\ee
Thus,
\be \label{cu2}
\bigl[\Cc_\zeta\bigl( (\Sf_\zeta^{\sf T})^k \om\vec s\bigr)\bigr]_{(b,c)} = \sum_{j\le k_b, \, u_j^b = c} \exp\bigl(-2\pi i \om \bigl|\zeta^k(p^b_j)\bigr|_{\vec s}\bigr\rangle\bigr).
\ee

\begin{remark}
{\em
If $\det(\Sf_\zeta)=0$, one can restrict $\Sf_\zeta^{\sf T}$ to a rational invariant subspace on which it is non-singular and consider the spectral cocycle on the corresponding invariant
sub-torus, see \cite[Remark 4.6]{Yaari}.
}
\end{remark}


\subsection{Lyapunov exponents and the dimension of spectral measures}

Consider the pointwise upper  Lyapunov exponent of our cocycle, corresponding to a given vector $\vec{z}\in \C^m$: 
\be \label{Lyap1}
{\chi}_{\zeta,\xi,\vec{z}}^+=\limsup_{n\to \infty} \frac{1}{n} \log \|\Cc_\zeta(\xi,n)\vec{z}\|, 
\ee
as well as
\be \label{Lyap2}
{\chi}_{\zeta,\xi}^+= \limsup_{n\to \infty} \frac{1}{n} \log \|\Cc_\zeta(\xi,n)\|.
\ee
The definition is independent of the  norm  used. Of course, ${\chi}_{\zeta,\xi,\vec{z}}^+ \le {\chi}_{\zeta,\xi}^+$ for all $\vec z$.
Since  for every $\xi$ the absolute values of the entries of $\Cc_\zeta(\xi,n)$ are not greater than those of $\Sf_{\zeta^n}^{\rm T}$,
$$\chi_{\zeta,\xi}^+ \le \log\theta\ \ \mbox{ for all}\ \xi\in \T^d,$$
where $\theta$ is the PF eigenvalue of $\Sf_\zeta$. The pointwise upper Lyapunov exponent $\chi_{\zeta,\xi}^+$ is invariant under the action of the endomorphism induced by
$\Sf_{\zeta}^{\rm T}$. 
 If this endomorphism is ergodic,
which is equivalent to $\Sf_\zeta$ having no eigenvalues that are roots of unity (see \cite[Cor,\,2.20]{EW}),
then the limit in \eqref{Lyap2} exists and is constant $m_d$-a.e. by the Furstenberg-Kesten Theorem \cite{FK}; it will be denoted
by $\chi_\zeta$. This is the {\em top Lyapunov exponent} of the spectral cocycle (other exponents exist a.e.\ as well, by the Oseledets Theorem).

We are interested in fractal properties of spectral measures. One of the commonly used local characteristics of a finite  measure $\nu$ on a metric space is the {\em lower local dimension} defined 
by $$\underline{d}(\nu,\om) = \liminf_{r\to 0} \frac{\log\nu(B_r(\om))}{\log r}\,.$$ Alternatively, it can be thought of as the best possible local H\"older exponent, i.e.:
$$
\und{d}(\nu,\om) = \sup\{\alpha\ge 0:\ \nu(B_r(\om)) = O(r^\alpha),\ r\to 0\}.
$$
For example, the lower local dimension is zero at a point mass and is infinite outside the compact support of a measure.
The following is a special case of \cite[Theorem 4.3]{BuSo20}. 

\begin{theorem} \label{thm:local1}
Let $\zeta$ be a primitive aperiodic substitution on $\A = \{1,\ldots,d\}$ with a non-singular substitution matrix $\Sf_\zeta$.
Let $\vec s\in \R^d_+$ and consider the suspension flow $(\Xx_\zeta^{\vec{s}}, h_t, \wt \mu)$.
Let $f(x,t) = \sum_{j\in \A} \Onne_{[j]} \cdot \psi_j(t)$ be a Lip-cylindrical function and $\sig_f$ the corresponding spectral measure.
 Fix $\om\in \R$ and let 
 $$
\xi = \om\vec{s} \ \mbox{\rm (mod $\Z^m$)},  \ \vec{z} = (\what{\psi}_j(\om))_{j\in \Ak}.
$$
Suppose that  ${\chi}^+_{\zeta,\xi,\vec z}> 0$.
Then 
 \be \label{ki2}
 \underline{d}(\sig_{f},\om) = 2 - \frac{2{\chi}^+_{\zeta,\xi,\vec z}}{\log\theta}\,.
 \ee
 If ${\chi}^+_{\zeta,\xi,\vec z}\le 0$, then
\be \label{kia}
\underline{d}(\sig_f,\om) \ge 2.
\ee
\end{theorem}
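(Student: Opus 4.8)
The plan is to read the local scaling of $\sig_f$ at $\om$ off the growth of the spectral cocycle along the orbit of $\xi=\om\vec s$, using the twisted Birkhoff sums of Section~\ref{sec:Riesz} as the bridge. First I would record the identity linking the two: writing $H_n(\om):=\sum_{a\in\A}\what\psi_a(\om)\,\Phi^{\vec s}_a(\zeta^n(\gam),\om)$, the matrix-product structure behind \eqref{def-Psi}--\eqref{def-Pbi}, transported to the tiling length via \eqref{cu2}, gives $H_n(\om)=\bigl[\Cc_\zeta(\xi,n)\,\vec z\bigr]_\gam$ for every $\gam\in\A$. Hence $\limsup_n\frac1n\log|H_n(\om)|$ is governed by $\chi^+_{\zeta,\xi,\vec z}$ from \eqref{Lyap1}, and by \eqref{spect} the weak$^*$ density producing $\sig_f$ at level $n$ is $|\zeta^n(\gam)|_{\vec s}^{-1}|H_n(\om)|^2$, with $|\zeta^n(\gam)|_{\vec s}\asymp\theta^n$ by Perron--Frobenius. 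This already fixes the natural scale: the block $\zeta^n(\gam)$ occupies tiling length $\asymp\theta^n$, so $n$ and the radius $r\asymp\theta^{-n}$ are linked by $n\asymp\frac{\log(1/r)}{\log\theta}$.

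Next I would prove $\underline{d}(\sig_f,\om)\ge 2-\tfrac{2\chi^+_{\zeta,\xi,\vec z}}{\log\theta}$, with the convention that the right side is $\ge2$ when $\chi^+_{\zeta,\xi,\vec z}\le0$. The tool is the Hof estimate \eqref{eq-estim1}, $\sig_f(B(\om,r))\le\frac{\pi^2}{4N}G_N(f,\om)$ with $N=\lfloor(2r)^{-1}\rfloor$. What remains is to bound $G_N(f,\om)=N^{-1}\|S_N^x(f,\om)\|_2^2$ for arbitrary $N$ and $x$, not only for $N=|\zeta^n(\gam)|_{\vec s}$. Here I would invoke the hierarchical (recognizability) structure of a primitive aperiodic substitution: a window of tiling length $N\asymp\theta^n$ in $x$ meets only a bounded number of level-$n$ super-tiles $\zeta^n(\cdot)$, so $S_N^x(f,\om)$ is a sum of $O(1)$ terms of $H_n$-type, plus boundary corrections of lower order. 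This yields $G_N(f,\om)\lesssim N^{-1}\|\Cc_\zeta(\xi,n)\vec z\|^2$. Fixing $\eps>0$, the definition of $\chi^+_{\zeta,\xi,\vec z}$ gives $\|\Cc_\zeta(\xi,n)\vec z\|\le e^{(\chi^+_{\zeta,\xi,\vec z}+\eps)n}$ for all large $n$, whence $\sig_f(B(\om,r))\lesssim N^{-2}e^{2(\chi^+_{\zeta,\xi,\vec z}+\eps)n}\asymp r^{\,2-2(\chi^+_{\zeta,\xi,\vec z}+\eps)/\log\theta}$; letting $\eps\to0$ gives the bound, and the case $\chi^+_{\zeta,\xi,\vec z}\le0$ gives \eqref{kia}.

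To obtain the matching upper bound $\underline{d}(\sig_f,\om)\le 2-\tfrac{2\chi^+_{\zeta,\xi,\vec z}}{\log\theta}$ when $\chi^+_{\zeta,\xi,\vec z}>0$, I would pass to a subsequence $n_k$ realizing the limsup, so $\|\Cc_\zeta(\xi,n_k)\vec z\|\ge e^{(\chi^+_{\zeta,\xi,\vec z}-\eps)n_k}$, and, with $\gam$ chosen by primitivity so that the component $|H_{n_k}(\om)|$ is comparably large, deduce $G_{N_k}(f,\om)\gtrsim N_k^{-1}e^{2(\chi^+_{\zeta,\xi,\vec z}-\eps)n_k}$ at $N_k\asymp\theta^{n_k}$. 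The point is to turn this lower bound on $G_{N_k}$ into a lower bound on $\sig_f$ of a genuine ball. I would use the Fej\'er representation \eqref{Fejer}, $G_N(f,\om)=\int_0^1 K_{N-1}(\om-\tau)\,d\sig_f(\tau)$, and split the integral at radius $A/N$: the tail is controlled by the decay $K_{N-1}(\om-\tau)\lesssim N^{-1}(\om-\tau)^{-2}$ together with the upper regularity of $\sig_f$ already established, so for $A$ large the tail is at most half of $G_N$. This forces $\sig_f(B(\om,A/N_k))\gtrsim N_k^{-1}G_{N_k}(f,\om)\gtrsim r^{\,2-2(\chi^+_{\zeta,\xi,\vec z}-\eps)/\log\theta}$ along $r\asymp\theta^{-n_k}$, and letting $\eps\to0$ completes \eqref{ki2}.

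The main obstacle I anticipate is precisely this reverse estimate: the Fej\'er kernel has heavy $(\om-\tau)^{-2}$ tails, so a large value of $G_N(f,\om)$ could a priori be produced by mass of $\sig_f$ far from $\om$ rather than nearby, and one must rule this out. The bootstrap that resolves it --- first prove upper regularity of $\sig_f$, then use it to dominate the tail contribution to $G_N$ by the local one --- is what makes the two one-sided bounds fit together into the exact value \eqref{ki2}. A secondary technical point is the block-decomposition estimate of the second paragraph: making rigorous the claim that a length-$\asymp\theta^n$ window carries only $O(1)$ level-$n$ super-tiles requires recognizability of the primitive aperiodic substitution and careful accounting of boundary super-tiles, but the resulting additive $O(1)$ and $O(\log n)$ errors are harmless after dividing by $n$ and passing to the limsup.
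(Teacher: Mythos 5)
Your route is the same as the paper's (which is the scheme of \cite{BuSo20}): a Hof-type spectral lemma linking $G_R(f,\om)$ to $\sig_f(B_r(\om))$, a decomposition of orbit windows into supertiles to bound twisted Birkhoff integrals by the cocycle applied to $\vec z$, and lower bounds on $G_R$ at the scales $R_n\asymp\theta^n$ for the reverse inequality. Two small corrections to that part: for the flow you should use Lemma~\ref{lem-spec2} rather than the $\Z$-action statements \eqref{Fejer} and \eqref{eq-estim1} (here $\sig_f$ lives on $\R$ and the kernel is $\sin^2(\pi R(\om-\tau))/(\pi^2R(\om-\tau)^2)$), and your intermediate bound $G_N\lesssim N^{-1}\|\Cc_\zeta(\xi,n)\vec z\|^2$ should read $G_N\lesssim N^{-1}\bigl(\sum_{k\le n}\|\Cc_\zeta(\xi,k)\vec z\|\bigr)^2$, because the window decomposes into $O(1)$ supertiles of \emph{each} level $k\le n$, not only of level $n$; this costs nothing after summing the geometric series, but the inequality as you wrote it is false in general since the limsup in \eqref{Lyap1} need not be attained at level $n$.

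The genuine gap is in your final ``bootstrap.'' Write $\chi^+:={\chi}^+_{\zeta,\xi,\vec z}$. The regularity you have established is $\sig_f(B_r(\om))\lesssim r^{\alpha}$ with $\alpha=2-2(\chi^++\eps)/\log\theta$, while the lower bound you claim along $N_k\asymp\theta^{n_k}$ is $G_{N_k}\gtrsim N_k^{1-\beta'}$ with $\beta'=2-2(\chi^+-\eps)/\log\theta$, so that $\beta'-\alpha=4\eps/\log\theta>0$. The tail estimate that $\alpha$-regularity yields is $\int_{|\tau-\om|>A/N}K_{N-1}(\om-\tau)\,d\sig_f(\tau)\lesssim A^{\alpha-2}N^{1-\alpha}$, and since $1-\alpha>1-\beta'$, this bound \emph{dominates} the main term for every fixed $A$ and $\eps$ once $N_k$ is large: ``tail $\le\frac12 G_{N_k}$'' would force $N_k^{\beta'-\alpha}\le C A^{2-\alpha}$, which fails as $N_k\to\infty$. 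Thus the bootstrap with the self-established exponent can never close, because that exponent is by construction strictly below the one you are trying to prove; with the available estimates you cannot exclude that essentially all of $G_{N_k}$ comes from mass far from $\om$. The correct way to finish --- and exactly what the paper's Lemma~\ref{lem-spec2}(ii) encodes --- is the contrapositive: assume $\underline{d}(\sig_f,\om)>2-2\chi^+/\log\theta$, pick $\alpha'$ strictly in between (and $<2$), so that $\sig_f(B_r(\om))\le Cr^{\alpha'}$ for small $r$; then choose $\eps$ so small that $\beta'<\alpha'$; now the tail is $\lesssim A^{\alpha'-2}N^{1-\alpha'}=o(N^{1-\beta'})$, and your lower bound on $G_{N_k}$ gives $\sig_f(B(\om,A/N_k))\gtrsim N_k^{-\beta'}\gg N_k^{-\alpha'}$, a contradiction. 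Separately, the one-line step $G_{N_k}\gtrsim N_k^{-1}e^{2(\chi^+-\eps)n_k}$ is itself the delicate point that the paper defers to \cite{BuSo20}: for $(x,t)$ in a set of positive measure the window is misaligned with the supertile $\zeta^{n_k}(\gam)$, and the boundary errors are only $\lesssim\sum_{j\le n_k-L}\|\Cc_\zeta(\xi,j)\vec z\|\sim e^{(\chi^++\eps)(n_k-L)}$ when the misalignment is $\lesssim\theta^{-L}N_k$; beating $e^{(\chi^+-\eps)n_k}$ forces $L$ to grow linearly in $n_k$, and one must then track the resulting power loss in the measure of the restricted set. Primitivity (``some component is comparably large'') does not by itself address this.
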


The starting point of the proof is the following ``spectral lemma,'' inspired by Hof \cite{Hof}. The 1st part is the analog of Lemma~\ref{lem-spec1} for measure-preserving flows.

\begin{lemma} \label{lem-spec2}
For $f \in L^2(\Xx_\zeta^{\vec s},\wt\mu)$, $R>0$,  $\om \in \R$, let
$$
G_R(f,\om) = R^{-1} \int_{\Xx_\zeta^{\vec s}} |S_R^{(x,t)}(f,\om)|^2\,d\wt\mu(x,t),\  \ \ \mbox{where}\ \  \ S_R^{(x,t)} (f,\om) = \int_0^R e^{-2\pi i \om \tau} f(h_\tau (x,t))\,d\tau.
$$
Let $\sig_f$ be the spectral measure for the flow.

{\rm (i)} Suppose that for some fixed $\om \in \R$, $\alpha \ge 0$, { fixed} $R\ge 1$, and $f\in L^2(\Xx_\zeta^{\vec s},\wt\mu)$,
$$
G_R(f,\om) \le C_1 R^{1-\alpha}.
$$
Then
$$
\sig_f(B_r(\om)) \le C_1 \pi^2 2^\alpha  r^\alpha,\ \ \mbox{for}\ r = (2R)^{-1}.
$$

{\rm (ii)} Suppose that for some fixed $\om \in \R$, $\alpha\in (0,2)$, $r_0\in (0,1)$, and $f\in L^2(\Xx_\zeta^{\vec s},\wt\mu)$,
$$
\sig_f(B_r(\om)) \le C_2 r^\alpha,\ \ \mbox{for all}\ \ r\in (0,r_0).
$$
Then
$$G_R(f,\om) \le C_3 R^{1-\alpha},\ \ \mbox{for all}\ \ R\ge r_0^{-\frac{2}{2-\alpha}},$$
where $C_3$ depends only on $\alpha$, $C_2$, and $f$.
\end{lemma}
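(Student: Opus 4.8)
The plan is to reduce both parts to a single kernel representation of $G_R(f,\om)$, in exact analogy with the Fej\'er-kernel identity \eqref{Fejer} for automorphisms. First I would expand $G_R(f,\om)=R^{-1}\|S_R^{(x,t)}(f,\om)\|_2^2$ as a double integral over $[0,R]^2$, use the $h_\tau$-invariance of $\wt\mu$ to write $\langle f\circ h_\tau,f\circ h_\sigma\rangle=\langle f\circ h_{\tau-\sigma},f\rangle$, and then insert the spectral representation $\langle f\circ h_{\tau-\sigma},f\rangle=\int_\R e^{2\pi i\lambda(\tau-\sigma)}\,d\sig_f(\lambda)$. After interchanging the order of integration the inner double integral factors as $|\int_0^R e^{2\pi i(\lambda-\om)\tau}\,d\tau|^2$, yielding
\[
G_R(f,\om)=\int_\R K_R(\lambda-\om)\,d\sig_f(\lambda),\qquad K_R(s)=\frac{\sin^2(\pi sR)}{\pi^2 R s^2},
\]
the continuous analogue of the Fej\'er kernel. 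I would then record the two elementary pointwise bounds $K_R(s)\le R$ (from $|\sin u|\le|u|$) and $K_R(s)\le(\pi^2 R s^2)^{-1}$ (from $|\sin u|\le 1$), together with the lower bound $K_R(s)\ge 4R/\pi^2$ valid for $|s|\le(2R)^{-1}$, which follows from $\sin u\ge 2u/\pi$ on $[0,\pi/2]$.

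For part (i), since $K_R\ge 0$ and $K_R(\lambda-\om)\ge 4R/\pi^2$ for $\lambda\in B_r(\om)$ with $r=(2R)^{-1}$, restricting the integral to $B_r(\om)$ gives $G_R(f,\om)\ge(4R/\pi^2)\sig_f(B_r(\om))$, hence $\sig_f(B_r(\om))\le(\pi^2/4R)G_R(f,\om)$. Substituting the hypothesis $G_R\le C_1 R^{1-\alpha}$ together with $R=(2r)^{-1}$ then produces $\sig_f(B_r(\om))\le C_1\pi^2 2^{\alpha-2}r^\alpha$, which is even sharper than the claimed bound.

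For part (ii), the harder direction, I would estimate $\int K_R\,d\sig_f$ from above by splitting the line into three regions according to the distance from $\om$. On the core $\{|\lambda-\om|\le 1/R\}$ I use $K_R\le R$ and the hypothesis $\sig_f(B_{1/R}(\om))\le C_2 R^{-\alpha}$, for a contribution $\le C_2 R^{1-\alpha}$. On the middle region $\{1/R<|\lambda-\om|\le r_0\}$ I decompose into dyadic annuli $A_j=\{2^j/R<|\lambda-\om|\le 2^{j+1}/R\}$, bound $K_R\le R/(\pi^2 2^{2j})$ on $A_j$ and $\sig_f(A_j)\le C_2(2^{j+1}/R)^\alpha$ by the H\"older hypothesis (the transitional annulus straddling $r_0$ being a routine measure-theoretic point); summing the resulting geometric series $\sum_j 2^{j(\alpha-2)}$, which converges precisely because $\alpha<2$, again yields $O(R^{1-\alpha})$. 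On the tail $\{|\lambda-\om|>r_0\}$ the H\"older hypothesis is unavailable, so I instead use $K_R(\lambda-\om)\le(\pi^2 R r_0^2)^{-1}$ together with the finiteness $\sig_f(\R)=\|f\|_2^2$ to bound the contribution by $\|f\|_2^2/(\pi^2 R r_0^2)$.

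The decisive point, and the main obstacle, is matching this tail term to the target rate $R^{1-\alpha}$. The inequality $\|f\|_2^2/(\pi^2 R r_0^2)\le\|f\|_2^2\pi^{-2}R^{1-\alpha}$ is equivalent to $r_0^{-2}\le R^{2-\alpha}$, that is, exactly to the hypothesis $R\ge r_0^{-2/(2-\alpha)}$; thus the threshold on $R$ is precisely what is needed to absorb the low-frequency tail, and this is where the constant $C_3$ acquires its dependence on $f$ (through $\|f\|_2^2$) in addition to $\alpha$ and $C_2$. Collecting the three contributions gives $G_R(f,\om)\le C_3 R^{1-\alpha}$ for all $R\ge r_0^{-2/(2-\alpha)}$, completing the proof.
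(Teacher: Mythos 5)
Your proof is correct and takes essentially the same route as the paper: the survey states Lemma~\ref{lem-spec2} without proof, citing \cite{BuSo20}, but your kernel identity is exactly the continuous-time analogue of the Fej\'er-kernel representation \eqref{Fejer} and of the bound \eqref{eq-estim1} that the paper uses in the discrete case, which is also how the cited source argues (kernel bounds $K_R(s)\le R$, $K_R(s)\le (\pi^2 R s^2)^{-1}$, $K_R(s)\ge 4R/\pi^2$ on $|s|\le (2R)^{-1}$, plus dyadic decomposition for the converse direction). Your part (i) constant $C_1\pi^2 2^{\alpha-2}$ is in fact sharper than the stated $C_1\pi^2 2^{\alpha}$, and your observation that the threshold $R\ge r_0^{-2/(2-\alpha)}$ is precisely what absorbs the low-frequency tail term $\|f\|_2^2/(\pi^2 R r_0^2)$ correctly identifies why $C_3$ must depend on $f$ as well as on $\alpha$ and $C_2$.
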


A rough scheme of the proof of Theorem~\ref{thm:local1} is as follows.
 For the estimate from below on the lower local dimension $\und{d}(\sig,\om)$ we obtain upper bounds
 of the Birkhoff integrals $S_R^{(x,t)}$ that are {\em uniform} in $(x,t)$, and then use part (i) of Lemma~\ref{lem-spec2}. 
For $x$ starting with $\zeta^n(\gam)$ for some $\gam\in \A$ the Birkhoff integral looks almost like the expression in 
\eqref{def-Phi3}, and for an arbitrary $x$ one can use an efficient decomposition of $x[1,N]$ into such substituted symbols.

For the estimate from above on the lower local dimension $\und{d}(\sig,\om)$ we use part (ii) of Lemma~\ref{lem-spec2} to obtain lower bounds on $G_R(f,\om)$ on a sequence of scales $R=R_n\to \infty$ corresponding to the tiling lengths
of the words $\zeta^n(\gam)$ and restricting the integration in the definition of $G_R(f,\om)$ to the portion of the space where $x$ ``almost'' starts with $\zeta^n(\gam)$. This part is more delicate, see  \cite{BuSo20} for details. \qed

\medskip

We point out two corollaries from \cite{BuSo20}, where we make use of {\em simple cylindrical functions}:
\be \label{simple}
f_{\vec b}(x,t)=\sum_{j\in \Ak} b_j \Onne_{[j]}(x),\ \ \vec b\in \R^d_+.
\ee

\begin{corollary} \label{cor:local1}
Suppose we are under the assumptions of Theorem~\ref{thm:local1} and $\vec s\in \R^d_+$ is fixed. Then

{\rm (i)} $\chi^+_{\zeta,\om\vec s} \le \half \log\theta$ for Lebesgue-a.e.\ $\om\in \R$.

{\rm (ii)} $\chi^+_{\zeta,\om\vec s} \ge \half \log\theta$ for $\sig_f$-a.e.\ $\om\in\R$, for a.e.\ simple cylindrical function $f$.

{\rm (iii)} If $\chi^+_{\zeta,\om\vec s} < \half \log\theta$ for Lebesgue-a.e.\ $\om\in \R$, then the flow $(\Xx_\zeta^{\vec{s}}, h_t, \wt \mu)$ has purely singular spectrum.
\end{corollary}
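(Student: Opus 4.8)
The plan is to deduce all three statements from Theorem~\ref{thm:local1}, combined with two classical facts about the lower local dimension of a finite Borel measure $\nu$ on $\R$: (A) $\und{d}(\nu,\om)\ge 1$ for Lebesgue-a.e.\ $\om$, and (B) $\und{d}(\nu,\om)\le 1$ for $\nu$-a.e.\ $\om$. Fact (A) is immediate from the Lebesgue differentiation theorem, which gives $\lim_{r\to 0}\nu(B_r(\om))/(2r)<\infty$ (the density of the absolutely continuous part) for Lebesgue-a.e.\ $\om$, so that $\nu(B_r(\om))=O(r)$ and hence $\und{d}(\nu,\om)\ge 1$ there. Fact (B) is the density theorem stating that $\limsup_{r\to 0}\nu(B_r(\om))/(2r)>0$ for $\nu$-a.e.\ $\om$ (this limsup equals the a.c.\ density $\nu$-a.e.\ on the a.c.\ part and is $+\infty$ $\nu$-a.e.\ on the singular part), which forces $\nu(B_{r_n}(\om))\gtrsim r_n$ along a sequence $r_n\to 0$, whence $\und{d}(\nu,\om)\le 1$.

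For part (ii), fix a nonzero simple cylindrical $f=f_{\vec b}$ as in \eqref{simple}, with associated vector $\vec z=\vec z(\om)=(\what\psi_j(\om))_{j\in\A}$. By Fact (B) applied to $\sig_f$, one has $\und{d}(\sig_f,\om)\le 1$ for $\sig_f$-a.e.\ $\om$. Comparing with Theorem~\ref{thm:local1}, the case ${\chi}^+_{\zeta,\om\vec s,\vec z}\le 0$ is ruled out since it would force $\und{d}(\sig_f,\om)\ge 2$ by \eqref{kia}; thus ${\chi}^+_{\zeta,\om\vec s,\vec z}>0$, and \eqref{ki2} combined with $\und{d}(\sig_f,\om)\le 1$ gives ${\chi}^+_{\zeta,\om\vec s,\vec z}\ge\half\log\theta$. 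Since ${\chi}^+_{\zeta,\om\vec s,\vec z}\le{\chi}^+_{\zeta,\om\vec s}$ always, this yields (ii); choosing $\vec b$ with all entries positive guarantees $\vec z(\om)\ne 0$ off a $\sig_f$-null set, which is all that is needed.

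Part (i) requires passing from the vector exponent, which Theorem~\ref{thm:local1} controls, to the norm exponent ${\chi}^+_{\zeta,\om\vec s}$. For this I would use the elementary identity ${\chi}^+_{\zeta,\om\vec s}=\max_{1\le j\le d}{\chi}^+_{\zeta,\om\vec s,\bbe_j}$, which holds because $\|\Cc_\zeta(\om\vec s,n)\|$ is comparable, with constants depending only on $d$, to $\max_j\|\Cc_\zeta(\om\vec s,n)\bbe_j\|$, and the $\limsup$ of a maximum over finitely many indices is the maximum of the $\limsup$s. For each fixed $j$, apply Theorem~\ref{thm:local1} to the simple cylindrical function with $\vec b=\bbe_j$; its vector is $\vec z(\om)=\what\psi_j(\om)\bbe_j$, a nonzero scalar multiple of $\bbe_j$ for Lebesgue-a.e.\ $\om$, so by scale-invariance of the vector exponent ${\chi}^+_{\zeta,\om\vec s,\vec z(\om)}={\chi}^+_{\zeta,\om\vec s,\bbe_j}$ there. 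Were the set $\{\om:{\chi}^+_{\zeta,\om\vec s,\bbe_j}>\half\log\theta\}$ of positive Lebesgue measure, \eqref{ki2} would give $\und{d}(\sig_f,\om)<1$ on a positive-measure set, contradicting Fact (A); hence ${\chi}^+_{\zeta,\om\vec s,\bbe_j}\le\half\log\theta$ a.e., and taking the maximum over the finitely many $j$ proves (i). Part (iii) then follows by the same mechanism run in reverse: under the hypothesis ${\chi}^+_{\zeta,\om\vec s}<\half\log\theta$ a.e., every Lip-cylindrical $f$ has ${\chi}^+_{\zeta,\om\vec s,\vec z(\om)}\le{\chi}^+_{\zeta,\om\vec s}<\half\log\theta$, so \eqref{ki2}--\eqref{kia} force $\und{d}(\sig_f,\om)>1$ for Lebesgue-a.e.\ $\om$; if $\sig_f$ had a nonzero absolutely continuous part, then on the positive-measure set where its density is positive one would have $\und{d}(\sig_f,\om)=1$, a contradiction, so $\sig_f$ is purely singular. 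Invoking the flow analog of the maximal-spectral-type reduction of Section~\ref{sec:other} --- that the maximal spectral type is dominated by a countable sum of spectral measures of Lip-cylindrical functions refined along the partitions $\Pk_k$, as in \eqref{eq:maxispec} --- singularity of all of these gives purely singular spectrum for $(\Xx_\zeta^{\vec s},h_t,\wt\mu)$.

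The deep input, Theorem~\ref{thm:local1}, is assumed; granting it, the two genuine points of care are the vector-to-norm passage in (i), where the column comparison and scale-invariance of the vector exponent (together with $\what\psi_j(\om)\ne 0$ a.e.) are essential, and, in (iii), the maximal-spectral-type reduction for the suspension flow, namely that countably many Lip-cylindrical functions span a dense subspace of $L^2(\Xx_\zeta^{\vec s},\wt\mu)$ and thus determine the spectral type.
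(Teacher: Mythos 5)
Your proposal is correct, and its overall strategy (everything from Theorem~\ref{thm:local1} plus standard density facts for measures on $\R$) is the paper's; but your implementation of the vector-to-norm passage is genuinely different. In parts (ii) and (iii) you run essentially parallel to the paper: where you invoke the density theorem ($\limsup_{r\to 0}\nu(B_r(\om))/(2r)>0$ for $\nu$-a.e.\ $\om$), the paper invokes the equivalent fact $\dim_H(\sig_f)\le 1$ via Falconer, and in (iii) both treatments defer the same final step. The divergence is in part (i). The paper randomizes over the weight vector: a Fubini argument shows that for Lebesgue-a.e.\ $\vec b\in\R^d_+$ and Lebesgue-a.e.\ $\om$, the vector $\vec z(\om)$ realizes the \emph{full} norm exponent, which yields the exact formula \eqref{kiaa} with $\chi^+_{\zeta,\om\vec s}$ itself, from which (i) and (ii) are both read off. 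You instead decompose deterministically, $\chi^+_{\zeta,\om\vec s}=\max_{1\le j\le d}\chi^+_{\zeta,\om\vec s,\bbe_j}$ (valid since the matrix norm is comparable to the maximum of the column norms and $\limsup$ commutes with a finite maximum), and run the Lebesgue-differentiation contradiction once for each of the $d$ indicator functions $\Onne_{[j]}$, using scale-invariance of the vector exponent and $\what{\psi}_j(\om)\ne 0$ off a countable set. Your route buys two things: it avoids Fubini and the attendant measurability bookkeeping entirely, and it upgrades (ii) from ``a.e.\ simple cylindrical function'' to \emph{every} nonzero one, since in that direction only the one-sided inequality $\chi^+_{\zeta,\om\vec s,\vec z}\le\chi^+_{\zeta,\om\vec s}$ is needed rather than equality. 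What the paper's route buys in exchange is the stronger intermediate statement \eqref{kiaa} --- an exact a.e.\ dimension formula in terms of the norm exponent, for a.e.\ $\vec b$ --- which is of independent interest (it is \cite[Cor.\,4.4]{BuSo20}) and is reused there.

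One wording in your closing paragraph should be corrected, though it does not damage the argument as you structured it: level-zero Lip-cylindrical functions \eqref{eq-cylinder} do \emph{not} span a dense subspace of $L^2(\Xx_\zeta^{\vec s},\wt\mu)$; density, and hence control of the maximal spectral type as in \eqref{eq:maxispec}, requires the refined functions supported on the sets $T^i\zeta^k[a]$, to which Theorem~\ref{thm:local1} does not apply verbatim. Extending the dimension estimate to those functions is exactly the step the paper also defers (``apply a similar reasoning to higher-level cylindrical functions''), carried out in \cite{BuSo20}; your sketch is at the same level of completeness as the paper's here, provided that sentence is read as referring to the refined family.
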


\begin{corollary} \label{cor:local2}
Let $\zeta$ be a primitive aperiodic substitution on $\A = \{1,\ldots,d\}$ with a non-singular substitution matrix $\Sf_\zeta$ with no eigenvalues that are roots of unity, so that the Lyapunov exponent
$\chi_\zeta$ is well-defined. Then

{\rm (i)} the 
Lyapunov exponent satisfies $\chi_{\zeta} \le \half \log\theta$;

{\rm (ii)} if $\chi_{\zeta} < \half \log\theta$, then for a.e.\ $\vec s\in \R^d_+$ the flow $(\Xx_\zeta^{\vec{s}}, h_t, \wt \mu)$ has purely singular spectrum.
\end{corollary}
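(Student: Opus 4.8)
The plan is to obtain both statements from Corollary~\ref{cor:local1}, using the Furstenberg--Kesten theorem to identify $\chi_\zeta$ and a Fubini argument in polar coordinates to pass between estimates that hold \emph{along a.e.\ line} $\{\om\vec s\}$ and estimates that hold at \emph{$m_d$-a.e.\ point} of $\T^d$. The ingredient I would set up first is the ergodic-theoretic meaning of $\chi_\zeta$: since $\Sf_\zeta$ has no eigenvalue that is a root of unity, the endomorphism $\xi\mapsto\Sf_\zeta^{\sf T}\xi$ is ergodic for $m_d$, so by Furstenberg--Kesten \cite{FK} the $\limsup$ in \eqref{Lyap2} is a limit equal to $\chi_\zeta$ for $m_d$-a.e.\ $\xi$; thus $\chi^+_{\zeta,\xi}=\chi_\zeta$ for $m_d$-a.e.\ $\xi\in\T^d$. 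The second ingredient is a measure-theoretic bridge: writing $\Delta=\{\vec s\in\R^d_+:\ \sum_{j} s_j=1\}$, the map $(\vec s,\om)\mapsto\om\vec s$ is a diffeomorphism of $\Delta\times(0,\infty)$ onto $\R^d_+$ with everywhere positive Jacobian, so Lebesgue measure disintegrates as $d\xi=J(\vec s,\om)\,d\om\,d\sigma(\vec s)$ with $J>0$ and $\sigma$ a smooth measure on $\Delta$; moreover the cube $Q=[1,2)^d\subset\R^d_+$ is a fundamental domain on which the projection $\R^d\to\T^d$ is a measure-preserving bijection onto $\T^d$, while $\xi\mapsto\chi^+_{\zeta,\xi}$ is $\Z^d$-periodic and satisfies $\chi^+_{\zeta,-\xi}=\chi^+_{\zeta,\xi}$ (immediate from \eqref{coc0}, since $\Cc_\zeta(-\xi)=\overline{\Cc_\zeta(\xi)}$ entrywise, so the norms of the cocycle products agree).

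For part (i) I would argue as follows. By Corollary~\ref{cor:local1}(i), for \emph{each} fixed $\vec s\in\R^d_+$ the set $\{\om>0:\ \chi^+_{\zeta,\om\vec s}>\half\log\theta\}$ is Lebesgue-null. Integrating over $\Delta$ against $\sigma$ and using the disintegration $d\xi=J\,d\om\,d\sigma$, the set $\{\xi\in\R^d_+:\ \chi^+_{\zeta,\xi}>\half\log\theta\}$ is Lebesgue-null in the cone; intersecting with the fundamental domain $Q$ and projecting to $\T^d$ shows, by the $\Z^d$-periodicity of $\chi^+_{\zeta,\cdot}$, that $\chi^+_{\zeta,\xi}\le\half\log\theta$ for $m_d$-a.e.\ $\xi$. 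Since $\chi^+_{\zeta,\xi}=\chi_\zeta$ for $m_d$-a.e.\ $\xi$, this yields $\chi_\zeta\le\half\log\theta$.

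For part (ii) I would run the same Fubini in the reverse direction. Assuming $\chi_\zeta<\half\log\theta$, Furstenberg--Kesten makes $B:=\{\xi\in\T^d:\ \chi^+_{\zeta,\xi}\ge\half\log\theta\}$ an $m_d$-null set. Its preimage under $\R^d\to\T^d$ is Lebesgue-null, hence so is its intersection $\wt B$ with $\R^d_+$; by the disintegration $d\xi=J\,d\om\,d\sigma$, for $\sigma$-a.e.\ $\vec s\in\Delta$ the radial slice $\{\om>0:\ \om\vec s\in\wt B\}$ is Lebesgue-null. The property that $\{\om:\ \chi^+_{\zeta,\om\vec s}\ge\half\log\theta\}$ has measure zero is invariant under scaling $\vec s\mapsto c\vec s$ (rescale $\om$) and, using $\chi^+_{\zeta,-\xi}=\chi^+_{\zeta,\xi}$, covers negative $\om$ as well; hence for a.e.\ $\vec s\in\R^d_+$ one has $\chi^+_{\zeta,\om\vec s}<\half\log\theta$ for Lebesgue-a.e.\ $\om\in\R$. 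Corollary~\ref{cor:local1}(iii) then gives that the flow $(\Xx_\zeta^{\vec s},h_t,\wt\mu)$ has purely singular spectrum for a.e.\ $\vec s$.

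The analytic substance lives entirely in Theorem~\ref{thm:local1} and Corollary~\ref{cor:local1}; the only real work here is the measure-theoretic bookkeeping, and the point I would be most careful about is the passage between the torus average defining $\chi_\zeta$ and the line-by-line statements of Corollary~\ref{cor:local1}. This passage rests on two things: the ergodicity of the toral endomorphism, which alone makes $\chi_\zeta$ the $m_d$-a.e.\ value of $\chi^+_{\zeta,\xi}$, and the correct identification of Lebesgue measure on the positive cone with $m_d$ through a fundamental domain, together with the scaling and conjugation symmetries that upgrade ``a.e.\ direction in $\Delta$'' to ``a.e.\ $\vec s\in\R^d_+$'' and extend the radial estimates from $\om>0$ to all real $\om$.
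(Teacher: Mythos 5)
Your proposal is correct and takes essentially the same route as the paper, which deduces Corollary~\ref{cor:local2} precisely from Corollary~\ref{cor:local1} together with the $m_d$-a.e.\ existence of the global Lyapunov exponent via Furstenberg--Kesten under the ergodicity of the toral endomorphism. The polar-coordinate Fubini argument, the $\Z^d$-periodicity and conjugation symmetry of $\xi\mapsto\chi^+_{\zeta,\xi}$, and the scaling invariance in $\vec s$ that you spell out are exactly the bookkeeping the paper leaves implicit, and all of these steps are sound.
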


 Corollary~\ref{cor:local2} follows from Corollary~\ref{cor:local1} and the existence $m_d$-a.e.\ of the global Lyapunov exponent.

\begin{proof}[Sketch of the proof of Corollary~\ref{cor:local1}]
(i)
An application of Fubini's Theorem, combined with \eqref{ki2}, yields that for Lebesgue-a.e.\ $\vec b\in \R^d_+$, for Lebesgue-a.e.\ $\om\in \R$, 
\be \label{kiaa}
\underline{d}(\sig_{f_{\vec b}}) = 2 - \frac{2\chi^+_{\zeta,\om \vec s}}{\log \theta},
\ee
where $f_{\vec b}$ is a simple cylindrical function from \eqref{simple}, see \cite[Cor.\,4.4]{BuSo20}.
Assuming $\chi^+_{\zeta,\om\vec s}>\half\log\theta$ for $\om\in A\subset \R$, where $A$ has positive Lebesgue measure,  we obtain from \eqref{kiaa}
that there is a non-zero simple
cylindrical function $f_{\vec b}$ such that $\underline{d}(\sig_{f_{\vec b}}) < 1$ on $A$. But this contradicts the well-known fact that $\lim_{r\to 0} \frac{\nu(B_r(\om))}{2r} <
\infty$ Lebesgue-a.e.\ for any positive measure $\nu$ on the line.

\smallskip

(ii) Let $f= f_{\vec b}$ be a simple cylindrical function for which \eqref{kiaa} holds. Observe that $\sig_f$, being a measure on the line, has Hausdorff dimension not greater 
than one, hence
$$
\dim_H(\sig_f)  = \sup\{s\ge 0:\ \underline{d}(\sig_f,\om) \ge s\ \ \mbox{for $\sig_f$-a.e.\ $\om$}\} \le 1,
$$
see \cite[Prop.\,10.2]{Falc-book}.
It follows that $\underline{d}(\sig_f,\om)\le 1$ for $\sig_f$-a.e.\ $\om$, and hence $\chi^+_{\zeta,\om\vec s}\ge \half\log\theta$ for $\sig_f$-a.e.\ $\om$ by \eqref{kiaa}.

\smallskip

(iii) Suppose that $\chi^+_{\zeta,\om\vec s} < \half\log\theta$ for  Lebesgue-a.e.\ $\om\in \R$.
We will show that $\sig_f$ is singular for every Lip-cylindrical function $f$. By Theorem~\ref{thm:local1},
$$
\underline{d}(\sig_f,\om) \ge 2 - \frac{2\max\{0,\chi^+_{\zeta,\om\vec s}\}}{\log\theta} >1
$$
for  Lebesgue-a.e.\ $\om\in \R$. But this implies $\lim_{r\to 0} \frac{\sig_f(B_r(\om))}{2r}=0$ for Lebesgue-a.e.\ $\om\in \R$, hence the absolutely continuous part of
$\sig_f$ is zero. To conclude, we need to apply a similar reasoning to higher-level cylindrical functions, like those appearing in \eqref{eq:maxispec}.
\end{proof}

It is not immediately clear how to apply the criteria for singularity from Corollary~\ref{cor:local1}(iii) and  Corollary~\ref{cor:local2}(ii) to specific systems, but recently
we obtained the following

\begin{theorem}[{\cite{BuSo20b}}] \label{thm1}
Let $\zeta$ be a primitive aperiodic substitution on $\Ak = \{0,\ldots,d-1\}$, with $d\ge 2$, such that  the substitution matrix $\Sf_\zeta$ has a characteristic polynomial irreducible over $\Q$. Let $\theta$ be the Perron-Frobenius eigenvalue of $\Sf_\zeta$. If
\be \label{cond1}
\chi_\zeta < \frac{\log\theta}{2},
\ee
then the substitution $\Z$-action has pure singular spectrum.
\end{theorem}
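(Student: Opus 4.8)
The plan is to deduce singularity of the $\Z$-action from singularity of its constant-roof suspension flow, and then to verify the Lyapunov hypothesis of Corollary~\ref{cor:local1}(iii) for the single roof vector $\vec s=\one$. First I would apply Lemma~\ref{lem-equivalence} with $(X,T,\mu)=(X_\zeta,T,\mu)$: since the weight $(\sin(\pi\om)/\pi\om)^2$ is strictly positive off the nonzero integers, the spectral measure $\sig_f$ of the $\Z$-action is singular if and only if the spectral measure $\sig_F$ of the flow $(\Xx_\zeta^{\one},h_t,\wt\mu)$ is singular, where $F(x,t)=f(x)$; hence the $\Z$-action has purely singular spectrum as soon as the constant-one roof flow does. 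By Corollary~\ref{cor:local1}(iii), applied with $\vec s=\one\in\R^d_+$ (admissible, since irreducibility of the characteristic polynomial forces $0$ not to be a root, so $\det\Sf_\zeta\ne 0$), this holds provided
\[
\chi^+_{\zeta,\om\one} < \half\log\theta \quad\text{for Lebesgue-a.e.\ } \om\in\R .
\]
Everything therefore reduces to the claim that, under irreducibility, $\chi^+_{\zeta,\om\one}\le\chi_\zeta$ for a.e.\ $\om$.

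The hard part is precisely this claim. The global exponent $\chi_\zeta$ is only the a.e.\ value of $\chi^+_{\zeta,\xi}$ with respect to Haar measure $m_d$ on $\T^d$, whereas the diagonal ray $\{\om\one \pmod{\Z^d}\}$ is $m_d$-null; the ``a.e.\ $\vec s$'' Fubini device behind Corollary~\ref{cor:local2}(ii) is unavailable here because we are forced to take the one roof $\vec s=\one$. I would resolve this by showing that the forward orbit of the diagonal ray under the toral endomorphism $A:=\Sf_\zeta^{\sf T}$ equidistributes: for a.e.\ $\om$ and every fixed $m\ge 1$, the sequence $\{A^{km}\om\one\}_{k\ge 0}$ is uniformly distributed for $m_d$. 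Irreducibility enters twice. It guarantees that $A$ has no root-of-unity eigenvalues (a cyclotomic factor would contradict irreducibility, as $\theta>1$ is real), so $A$ is an ergodic Haar-preserving endomorphism, $\chi_\zeta$ is well defined, and by Kingman's subadditive theorem $\chi_\zeta=\inf_m\tfrac1m\int_{\T^d}\log\|\Cc_\zeta(\xi,m)\|\,dm_d(\xi)$. It also guarantees that the Perron--Frobenius eigenvector $\vec\ell$ of $A$ has $\Q$-linearly independent coordinates (as $A$ is $\Q$-conjugate to a companion matrix and $1,\theta,\dots,\theta^{d-1}$ is a $\Q$-basis of $\Q(\theta)$), whence $\langle\bk,\vec\ell\rangle\ne 0$ for every $\bk\in\Z^d\setminus\{0\}$.

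To prove the equidistribution I would apply the Davenport--Erd\H{o}s--LeVeque criterion to the Weyl sums. For fixed $\bk\ne 0$ one has $\langle\bk,A^{km}\om\one\rangle=\om\,c_{km}$ with $c_{km}=\langle\Sf_\zeta^{km}\bk,\one\rangle\in\Z$, and since $c_{km}\sim C\theta^{km}\langle\bk,\vec\ell\rangle$ with $\langle\bk,\vec\ell\rangle\ne 0$, these integers grow geometrically and are eventually distinct, so
\[
\int_0^1\Bigl|\tfrac1N\sum_{k<N} e^{2\pi i\om c_{km}}\Bigr|^2\,d\om = N^{-2}\,\#\{(j,k):\, j,k<N,\ c_{jm}=c_{km}\} = N^{-1}+O(N^{-2}),
\]
and $\sum_N N^{-1}\bigl(N^{-1}+O(N^{-2})\bigr)<\infty$; intersecting the resulting a.e.\ sets over the countably many $\bk$ and $m$ gives the equidistribution via Weyl's criterion. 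Finally I would combine this with submultiplicativity: writing $\Lam_n(\xi)=\log\|\Cc_\zeta(\xi,n)\|$, the cocycle property \eqref{cocycle3} yields $\Lam_{Nm}(\xi)\le\sum_{k=0}^{N-1}\Lam_m(A^{km}\xi)$, and filling the ranges $n\in[Nm,(N+1)m)$ with the crude bound $\Lam_1\le\log\theta$ shows, for a.e.\ $\om$,
\[
\chi^+_{\zeta,\om\one}=\limsup_{n\to\infty}\tfrac1n\Lam_n(\om\one)\le\limsup_{N\to\infty}\tfrac1{Nm}\sum_{k=0}^{N-1}\Lam_m(A^{km}\om\one)=\tfrac1m\int_{\T^d}\Lam_m\,dm_d,
\]
the last equality being the Birkhoff limit supplied by equidistribution (truncating $\Lam_m$ from below at $-M$ and letting $M\to\infty$ handles the $m_d$-null set where $\Cc_\zeta(\cdot,m)$ degenerates). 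Letting $m\to\infty$ gives $\chi^+_{\zeta,\om\one}\le\chi_\zeta<\half\log\theta$ a.e., which is the claim, and the theorem follows. The genuine obstacle is the transfer from the Haar-generic exponent to the null diagonal ray; once equidistribution of $\{A^{km}\om\one\}$ is in hand, the remainder is bookkeeping with subadditivity and the singularity criterion of Corollary~\ref{cor:local1}(iii).
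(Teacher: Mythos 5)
Your proof is correct, and its skeleton coincides with the paper's: pass from the $\Z$-action to the constant-one-roof suspension flow via Lemma~\ref{lem-equivalence}, invoke the singularity criterion of Corollary~\ref{cor:local1}(iii) with $\vec s=\one$ (legitimate, since irreducibility gives $\det\Sf_\zeta\ne 0$), and reduce everything to the bound $\chi^+_{\zeta,\om\one}\le\chi_\zeta$ for Lebesgue-a.e.\ $\om$, obtained from equidistribution of the diagonal orbit under $\xi\mapsto\Sf_\zeta^{\sf T}\xi \ (\mathrm{mod}\ \Z^d)$. Where you genuinely diverge is in how that equidistribution is established. The proof in \cite{BuSo20b}, as sketched in the paper, imports it from a theorem of Host \cite{Host2} on uniform distribution in the multidimensional torus; you prove it by hand, via an $L^2$/Borel--Cantelli (Davenport--Erd\H{o}s--LeVeque) argument for the Weyl sums, using the integrality of $c_{km}=\langle\Sf_\zeta^{km}\bk,\one\rangle$ together with the fact that irreducibility forces $\langle\bk,\vec\ell\rangle\ne 0$ for $\bk\in\Z^d\setminus\{0\}$, hence geometric growth and eventual distinctness of the $c_{km}$. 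This is closer in spirit to Yaari's alternative proof via uniform distribution theory \cite{Yaari} than to the Host-based argument. Your route buys two things: it is elementary and self-contained, and it delivers exactly what the subadditivity step needs, namely equidistribution along every arithmetic progression $\{(\Sf_\zeta^{\sf T})^{km}\om\one\}_{k\ge 0}$ for every block length $m$ --- a point the paper's one-line sketch (``then one can show'') glosses over, since equidistribution of the orbit under $A=\Sf_\zeta^{\sf T}$ alone does not formally control the Birkhoff averages of $\Lam_m$ along $A^m$-orbits that Kingman's characterization (Lemma~\ref{lem-vspom1}) requires; the Host-based route buys brevity at the price of a deeper citation. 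Two small points of hygiene: in your final display the last relation should be an inequality $\le$ rather than an equality, since $\Lam_m$ is unbounded below and your truncation argument only yields the upper bound (which is all you need); and it is worth saying explicitly that $\max(\Lam_m,-M)$ is continuous because the entries of $\Cc_\zeta(\cdot,m)$ are trigonometric polynomials, so that equidistribution applies to the truncations.
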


The idea is to apply Corollary~\ref{cor:local1}(iii), together with a theorem of Host \cite{Host2}, which implies that under our conditions, for Lebesgue-a.e.\ $\om$, the 
orbit of the point on the diagonal of the torus $\om\one$ under the endomorphism $\xi\mapsto \Sf_\zeta^{\sf T}\xi$ (mod $\Z^d$) equidistributes and then one can show that \eqref{cond1}
implies $\chi^+_{\zeta,\om\one} < \frac{\log\theta}{2}$ for a.e.\ $\om$. Recently R. Yaari, a Master's student from the Bar-Ilan University, found an alternative proof, using results from uniform distribution theory \cite{Yaari}. This allowed him to extend Theorem~\ref{thm1} to a class of reducible matrices and simultaneously obtain results
on singularity of substitution $\R$-actions.

For a practical verification of the condition (\ref{cond1}) one can use the following standard result, which follows from Kingman's Theorem, see \cite{Viana}.

\begin{lemma} \label{lem-vspom1}
Suppose that $\Sf_\zeta$ has no eigenvalues that are roots of unity, so that $\chi_\zeta$ is well-defined. Then
\be \label{cond2}
\chi_\zeta = \inf_k \frac{1}{k} \int_{\T^d} \log\|\Cc_{\zeta^k}(\xi)\|\,dm_d(\xi).
\ee
\end{lemma}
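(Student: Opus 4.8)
The plan is to recognize the right-hand side of (\ref{cond2}) as the value produced by Kingman's subadditive ergodic theorem applied to the spectral cocycle over the toral endomorphism (\ref{torend}), which I denote $T_\zeta:\ \xi\mapsto \Sf_\zeta^{\sf T}\xi\pmod{\Z^d}$, and then to identify the resulting $m_d$-a.e.\ limit with $\chi_\zeta$ using ergodicity. The entire argument is a routine unwinding of standard machinery; the new content is merely the identification of the defining a.e.\ limit with an infimum of averages.

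First I would set $g_n(\xi):=\log\|\Cc_\zeta(\xi,n)\|=\log\|\Cc_{\zeta^n}(\xi)\|$, using the identity (\ref{coc2}). The cocycle relation (\ref{cocycle3}), together with the composition law (\ref{most}), gives $\Cc_\zeta(\xi,n+m)=\Cc_\zeta(T_\zeta^n\xi,m)\,\Cc_\zeta(\xi,n)$, and submultiplicativity of the operator norm then yields the subadditivity $g_{n+m}(\xi)\le g_m(T_\zeta^n\xi)+g_n(\xi)$. This is exactly the hypothesis of Kingman's theorem relative to the measure-preserving map $T_\zeta$. Integrability is immediate: since every entry of $\Cc_\zeta(\xi)$ is bounded in modulus by the corresponding entry of $\Sf_\zeta^{\sf T}$, the norm $\|\Cc_\zeta(\xi)\|$ is uniformly bounded above, so $g_1^+\in L^1(m_d)$, as required.

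Next I would invoke Kingman's theorem: the limit $g(\xi)=\lim_{n\to\infty} n^{-1}g_n(\xi)$ exists for $m_d$-a.e.\ $\xi$, is $T_\zeta$-invariant, and satisfies $\int g\,dm_d=\lim_n n^{-1}\int g_n\,dm_d=\inf_n n^{-1}\int g_n\,dm_d$. The last equality is Fekete's lemma applied to the numerical sequence $a_n:=\int g_n\,dm_d$, whose subadditivity follows by integrating the pointwise inequality above and using the $T_\zeta$-invariance of $m_d$, so that $\int g_m\circ T_\zeta^n\,dm_d=\int g_m\,dm_d$.

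Finally, under the standing hypothesis that $\Sf_\zeta$ has no eigenvalue that is a root of unity, the endomorphism $T_\zeta$ is ergodic by \cite[Cor.\,2.20]{EW}, so the invariant function $g$ is a.e.\ constant; by the Furstenberg--Kesten theorem this constant is precisely the top Lyapunov exponent $\chi_\zeta$ of (\ref{Lyap2}). Hence $\chi_\zeta=\int g\,dm_d=\inf_k k^{-1}\int_{\T^d}\log\|\Cc_{\zeta^k}(\xi)\|\,dm_d(\xi)$, which is (\ref{cond2}). The only points deserving care are keeping the composition order in the cocycle relation consistent, so that subadditivity is with respect to $T_\zeta$ rather than its inverse, and noting that it is precisely the ergodicity hypothesis that upgrades the a.e.-constant invariant limit to the deterministic value $\chi_\zeta$; neither is a genuine obstacle.
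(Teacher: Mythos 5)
Your proof is correct and is exactly the argument the paper has in mind: the paper gives no details, stating only that the lemma ``follows from Kingman's Theorem, see \cite{Viana},'' and your write-up (subadditivity of $g_n(\xi)=\log\|\Cc_\zeta(\xi,n)\|$ over the Haar-measure-preserving endomorphism $\xi\mapsto\Sf_\zeta^{\sf T}\xi$, integrability from the entrywise bound by $\Sf_\zeta^{\sf T}$, Kingman plus Fekete for the infimum of averages, and ergodicity via \cite[Cor.\,2.20]{EW} to identify the a.e.\ constant limit with $\chi_\zeta$) is the standard unwinding of that citation.
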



\subsection{Spectral measure at zero}

For test functions of mean zero we can get much more accurate estimates at $\om= 0$. They are closely related to the question of {\em deviation of ergodic averages}
$S_R^{(x,t)}(f,0) = \int_0^R (f\circ h_\tau)(x,t)\,d\tau$, see Lemma~\ref{lem-spec2}. Such questions for substitutions were studied by many authors, see e.g.,
\cite{Adam,DT,DKT}.
 By the Birkhoff Ergodic Theorem, $R^{-1} S_R^{(x,t)}(f,0)\to \int f\,d\wt\mu = 0$, so one should expect
a different scaling law. In \cite{BuSo13,BuSo14} we obtained asymptotic expansions and even a limit law under appropriate conditions, adapting the methods of 
Bufetov \cite{Bufetov1} developed for translation flows. Here we restrict ourselves to a simple corollary of Theorem~\ref{thm:local1}.

Let $\zeta$ be a primitive aperiodic substitution on $\A = \{1,\ldots,d\}$ with a non-singular substitution matrix $\Sf_\zeta$.
Let $\theta=\theta_1 >|\theta_2|\ge\cdots$ be the eigenvalues of $\Sf_\zeta$, in the order of descent by their absolute value. Suppose that $|\theta_2|>1$. 
Denote by $P_j$ the projection from $\R^d$ to the linear span of the generalized eigenvectors corresponding to $\theta_j$, commuting with $\Sf_\zeta^{\sf T}$.

\begin{corollary} \label{cor:zero}
Let $\vec s\in \R^d_+$ and consider the suspension flow $(\Xx_\zeta^{\vec{s}}, h_t, \wt \mu)$.
Let $f(x,t) = \sum_{j\in \A} \Onne_{[j]} \cdot \psi_j(t)$ be a Lip-cylindrical function and $\sig_f$ the corresponding spectral measure.
Let 
$$
k:= \min\bigl\{j\ge 1: P_j\vec{z} \ne 0\},\ \ \mbox{where}\ \ \vec z = (\what{\psi}_j(0))_{j\in \Ak}.
$$
If $|\theta_k|>1$, then
$$
\underline{d}(\sig_f,0) = 2 - \frac{2\log|\theta_k|}{\log\theta_1}\,.
$$
If $|\theta_k|\le 1$, then $\underline{d}(\sig_f,0)\ge 2$.
\end{corollary}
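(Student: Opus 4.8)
The plan is to apply Theorem~\ref{thm:local1} at the single frequency $\om = 0$ and to evaluate the pointwise Lyapunov exponent appearing there by elementary linear algebra. At $\om = 0$ we have $\xi = 0\cdot\vec s = 0$ (mod $\Z^d$) and $\vec z = (\what\psi_j(0))_{j\in\A}$, exactly as in the statement. The point $\xi = 0$ is a fixed point of the endomorphism $\xi\mapsto \Sf_\zeta^{\sf T}\xi$, and by \eqref{coc0} one has $\Cc_\zeta(0) = \Sf_\zeta^{\sf T}$; hence every factor in \eqref{cocycle3} equals $\Sf_\zeta^{\sf T}$ and the spectral cocycle degenerates to the iterated transposed substitution matrix,
$$
\Cc_\zeta(0,n) = (\Sf_\zeta^{\sf T})^n, \qquad n\in\N.
$$
Consequently the pointwise upper Lyapunov exponent with the vector $\vec z$, defined in \eqref{Lyap1}, reduces to a question about the growth of $\|(\Sf_\zeta^{\sf T})^n\vec z\|$.

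First I would compute $\chi^+_{\zeta,0,\vec z} = \limsup_{n\to\infty} n^{-1}\log\|(\Sf_\zeta^{\sf T})^n\vec z\|$ through the spectral decomposition of $\Sf_\zeta^{\sf T}$. Writing $\vec z = \sum_j P_j\vec z$, each summand lies in the generalized eigenspace of $\theta_j$, on which $\Sf_\zeta^{\sf T} = \theta_j I + N_j$ with $N_j$ nilpotent; expanding $(\theta_j I + N_j)^n$ shows that $\|(\Sf_\zeta^{\sf T})^n P_j\vec z\| = |\theta_j|^n\cdot O(n^{a_j})$ for some integer $a_j\ge 0$ whenever $P_j\vec z\ne 0$. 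Since the generalized eigenspaces of distinct eigenvalues are in direct sum, the contribution of the smallest index $k$ with $P_k\vec z\ne 0$ cannot be cancelled by the remaining terms, whose eigenvalues are strictly smaller in modulus by the ordering $\theta_1 > |\theta_2|\ge\cdots$. The subexponential factors $n^{a_j}$ disappear after dividing by $n$ and taking logarithms, so the limit exists and equals
$$
\chi^+_{\zeta,0,\vec z} = \max_{j:\, P_j\vec z\ne 0}\log|\theta_j| = \log|\theta_k|.
$$

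It then remains only to substitute this value into Theorem~\ref{thm:local1}. If $|\theta_k|>1$, then $\chi^+_{\zeta,0,\vec z} = \log|\theta_k| > 0$, and \eqref{ki2} yields $\underline d(\sig_f,0) = 2 - 2\chi^+_{\zeta,0,\vec z}/\log\theta_1 = 2 - 2\log|\theta_k|/\log\theta_1$; if $|\theta_k|\le 1$, then $\chi^+_{\zeta,0,\vec z}\le 0$ and \eqref{kia} gives $\underline d(\sig_f,0)\ge 2$. This is precisely the asserted dichotomy.

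I do not expect a genuine obstacle here: the whole content is the collapse of the cocycle to $\Sf_\zeta^{\sf T}$ at $\xi = 0$, after which Theorem~\ref{thm:local1} does all the work. The only point requiring a little care is the linear-algebra fact that polynomial Jordan-block corrections do not perturb the exponential rate and that no cancellation occurs at the leading modulus $|\theta_k|$; both follow from the directness of the generalized eigenspace decomposition together with the strict ordering of the moduli. The standing hypothesis $|\theta_2|>1$ serves only to ensure that the interesting regime $1<|\theta_k|$, where the local dimension lies strictly between $0$ and $2$, is non-vacuous; it plays no role in the derivation itself. As a consistency check, if $f$ has nonzero mean then $P_1\vec z\ne 0$, so $k=1$, $|\theta_k|=\theta_1$, and the formula returns $\underline d(\sig_f,0)=0$, reflecting the atom of $\sig_f$ at the origin.
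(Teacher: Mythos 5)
Your proof is correct and follows the same route as the paper: apply Theorem~\ref{thm:local1} at $\om=0$, observe that $\Cc_\zeta(0,n)=(\Sf_\zeta^{\sf T})^n$, and compute $\chi^+_{\zeta,0,\vec z}=\log|\theta_k|$ by the generalized-eigenspace decomposition. The paper compresses this last step to ``elementary linear algebra,'' which you have simply spelled out (including the harmless Jordan-block corrections), so there is nothing to add.
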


Before derivation, we note that $k=1$ if and only if $\int f\,d\wt\mu\ne 0$, in which case $\sig_f$ has a point mass at zero and hence $\underline{d}(\sig_f,0) = 0$. Indeed,
by the definition of cylindrical function,
$$
\int f\,d\wt\mu = \sum_{j=1}^d \int_{[0,s_j]} \psi_j\cdot \mu([j]) = \sum_{j=1}^d \what{\psi}_j(0)\cdot \mu([j]).
$$
Recall from \eqref{freqPF} 
that ${\bbe_1^*}:=(\mu([j]))_{j\le d}$ is the PF eigenvector of $\Sf_\zeta$, so $P_1$, the projection onto the line of the Perron-Frobenius  eigenvector of $\Sf_\zeta^{\sf T}$, is given by 
$\langle \cdot, \bbe_1^*\rangle$, hence 
 $\int f\,d\wt\mu=0$ if and only if $P_1(\vec z) = 0$.
 
 \begin{proof}[Proof of Corollary~\ref{cor:zero}]
 By Theorem~\ref{thm:local1}, we only need to compute the Lyapunov exponent $\chi^+_{\zeta,0,\vec z}$. However, recall that at $\xi=0$ the spectral cocycle coincides with the transpose substitution
 matrix $\Sf_\zeta^{\sf T}$, and so $\chi^+_{\zeta,0,\vec z} = \log|\theta_k|$ follows by elementary linear algebra.
 \end{proof}


\subsection{Singular spectrum in the constant length case}

Now suppose that $\zeta$ is a primitive aperiodic substitution of constant length $q$.
Dekking's coincidence condition \cite{Dek1}, see Theorem~\ref{thm:dekking},
  gives an answer for when the spectrum is purely discrete. Thus it remains to analyze the continuous component, when it is present. The work of Queff\'elec \cite{Queff} contains a detailed and extensive study of this question; in particular, she expressed the maximal spectral type in terms of
generalized matrix Riesz products, which sometimes reduce to scalar generalized Riesz products. 
It is an {\em open problem} whether every {\em bijective} substitution (see Definition~\ref{def:bijective}) dynamical system is purely singular.
{Pure singular spectrum is known for Abelian bijective substitutions (this was outlined in \cite{Queff} and formally proved in \cite{Bartlett}; see \cite{BG2M} for a different proof).}
For bijective substitutions on two symbols we sketched a proof in Proposition~\ref{prop:singular}, based on the proofs from \cite{BGG,BG1}.
On the other hand, there are substitutions of constant length with a Lebesgue spectral component, such as the  Rudin-Shapiro substitution, see \cite{Queff}, and its generalizations, see \cite{AlLi,Frank,ChGr2}.
Recently Bartlett \cite{Bartlett} reworked (and in the case of one important example corrected) a part of Queff\'elec's theory, extended it to higher-dimensional block-substitutions of ``constant shape,'' and developed an algorithm for checking singularity. However, it proceeds by a ``case-by-case'' analysis and does not provide a general criterion. The following sufficient condition for singularity was obtained recently.

\begin{theorem}[Berlinkov and Solomyak \cite{BerSol}] \label{th-BeSo}
Let $\zeta$ be a primitive aperiodic substitution of constant length $q$. If  the substitution matrix $\Sf_\zeta$ has no eigenvalue whose absolute value equals $\sqrt{q}$, then the maximal spectral type of the substitution measure-preserving system 
$(X_\zeta,T,\mu)$ is singular.
\end{theorem}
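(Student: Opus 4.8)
The plan is to work directly with the matrix generalised Riesz product of Lemma~\ref{lem-Riesz}, specialised to constant length. Since $\zeta$ has length $q$ we have $\theta=q$, all column sums of $\Sf_\zeta$ equal $q$ (so $\one$ is the right Perron--Frobenius eigenvector of $\Sf_\zeta^{\sf T}$), and on the diagonal the cocycle \eqref{cocycle3} reads $\Pbi_n(\om)=\Cc_\zeta(\om\one,n)=N(q^{n-1}\om)\cdots N(\om)$, where $N(\om):=\Cc_\zeta(\om\one)=\sum_{j=0}^{q-1}B_j e^{-2\pi i j\om}$ and $B_j$ is the $0/1$ matrix recording the $(j{+}1)$-st letter of each $\zeta(b)$, so that $\sum_{j=0}^{q-1}B_j=\Sf_\zeta^{\sf T}$. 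Here the torus endomorphism reduces to the ergodic map $S_q\colon\om\mapsto q\om\ (\mathrm{mod}\ 1)$, so $\Sigb_\zeta$ is precisely a matrix Riesz product driven by $S_q$. By \eqref{eq:maxispec} it suffices to prove that $\Sigb_\zeta$ and its higher-level analogues $\Sigb_\zeta^{(k)}$ are all singular; the argument is identical for each, so I treat $\Sigb_\zeta$. (Alternatively one may pass to the self-similar suspension flow with $\vec s=\one$ via Lemma~\ref{lem-equivalence} and invoke Corollary~\ref{cor:local1}(iii), but the direct route below is cleaner for isolating the modulus $\sqrt q$.)

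First I would establish a matrix self-similarity for the absolutely continuous part, mirroring the Kakutani argument in the singularity Lemma above. From $\Pbi_n(\om)=\Pbi_{n-1}(q\om)N(\om)$ one gets
\[
q^{-n}\,\Pbi_n(\om)^*\Pbi_n(\om)=\tfrac1q\,N(\om)^*\bigl[q^{-(n-1)}\Pbi_{n-1}(q\om)^*\Pbi_{n-1}(q\om)\bigr]N(\om).
\]
Passing to the weak$^*$ limit of Lemma~\ref{lem-Riesz} and taking absolutely continuous parts -- push-forward under $\om\mapsto q\om$ and multiplication by the trigonometric-polynomial matrix $N(\om)$ commute with the Lebesgue decomposition -- shows that the a.c. matrix density $D:=d(\Sigb_\zeta)_{\rm ac}/d\om$, a positive semidefinite element of $L^1(\T,M_d(\C))$, satisfies
\[
D(\om)=\tfrac1q\,N(\om)^*\,D(q\om)\,N(\om)\qquad\text{a.e.}
\]
The goal is then to show $D\equiv 0$.

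Second, assuming $D\not\equiv0$ I would extract a forbidden eigenvalue. Iterating gives $D(\om)=q^{-n}\,\Pbi_n(\om)^*D(q^n\om)\,\Pbi_n(\om)$, so $D$ is a nonzero $S_q$-invariant measurable field of positive semidefinite forms for the cocycle $\{N(q^j\om)\}$; since $0<\|D\|_{L^1}<\infty$ and $S_q$ is ergodic, a Furstenberg--Kesten/Oseledets analysis pins the top Lyapunov exponent seen by $D$ to be exactly $\tfrac12\log q$ (a smaller exponent forces decay, a larger one blow-up). Writing $\vec v(\om)$ for a measurable top eigen-field of $D(\om)$, the functional equation reduces to the refinement equation
\[
\sqrt q\,\vec v(\om)=e^{i\alpha(\om)}\,N(\om)^*\,\vec v(q\om),
\]
and the heart of the matter is to prove that this admits a nonzero $L^2$ solution only if $\Sf_\zeta$ has an eigenvalue of modulus exactly $\sqrt q$. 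Expanding $\vec v$ in Fourier series converts the equation into a two-scale recursion on the coefficients governed by the integer matrices $B_j$ (with $\sum_j B_j=\Sf_\zeta^{\sf T}$), whose solvability is an algebraic condition producing a modulus-$\sqrt q$ eigenvalue; this contradicts the hypothesis, forcing $D\equiv0$, i.e. singularity.

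The step I expect to be the main obstacle is exactly this last extraction -- converting solvability of $\sqrt q\,\vec v(\om)=e^{i\alpha(\om)}N(\om)^*\vec v(q\om)$ into a modulus-$\sqrt q$ eigenvalue of $\Sf_\zeta$. The difficulty is that the cocycle is not normal and has no canonical invariant complement to the trivial direction $\one$, on which $N(\om)\one=\bigl(\sum_{j=0}^{q-1}e^{-2\pi i j\om}\bigr)\one$ carries zero exponent while the relevant growth lives on the complementary eigenvalues $\lambda_2,\dots,\lambda_d$ of $\Sf_\zeta$ with no rigid pointwise control. As an indicative special case I would first examine bijective substitutions, where each pair $\zeta(a),\zeta(b)$ differs in every position, so $\sum_{j=0}^{q-1}B_jB_j^{\sf T}=qI$ and hence $\tfrac1q\sum_{\ell=0}^{q-1}N\bigl(\tfrac{\om+\ell}q\bigr)N\bigl(\tfrac{\om+\ell}q\bigr)^*=qI$; this makes the scalar measure $({\rm tr}\,D)(\om)\,d\om$ exactly $S_q$-invariant, so ${\rm tr}\,D$ is constant by the Riemann--Lebesgue argument of the singularity Lemma. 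That already simplifies the picture, but (consistently with the fact that singularity of general bijective substitutions is open) it does not by itself yield $D\equiv0$: the genuine finish still requires the modulus-$\sqrt q$ extraction, which is where the hypothesis of the theorem is used.
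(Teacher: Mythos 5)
Your renormalization set-up is sound as far as it goes: the specialization of the cocycle to the diagonal, the relation $\Pbi_n(\om)=\Pbi_{n-1}(q\om)N(\om)$, and the Kakutani-style functional equation $D(\om)=\tfrac1q\,N(\om)^*D(q\om)N(\om)$ for the absolutely continuous matrix density are all correct (this is the route of Baake--G\"ahler--Ma\~nibo for diffraction). But the proof has a genuine gap exactly where you flag it, and it is not a technical loose end but the entire content of the theorem: the passage from a nonzero solution of $\sqrt q\,\vec v(\om)=e^{i\alpha(\om)}N(\om)^*\vec v(q\om)$ to an eigenvalue of $\Sf_\zeta$ of modulus $\sqrt q$. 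Nothing in your sketch forces this. The Fourier-coefficient ``two-scale recursion'' couples the coefficient at frequency $n$ to coefficients at frequencies $qn+j$, and there is no evident way to collapse that infinite system to an eigenvalue equation for the single constant matrix $\Sf_\zeta$. Moreover the preceding reductions are themselves heuristic: $D$ need not have rank one, a measurable invariant field of positive semidefinite forms does not by itself pin a Lyapunov exponent at $\tfrac12\log q$, and Oseledets data for the non-normal cocycle $\{N(q^j\om)\}$ have no a priori relation to the spectrum of $\Sf_\zeta$. So as written the argument does not close.

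The paper closes this gap with two ingredients you do not use, which let one bypass the global functional equation altogether. First, Queff\'elec's structure theorem: the maximal spectral type is $\lam*\eta$ with $\lam=\lam_0+\cdots+\lam_{k-1}$, where each $\lam_j$ is invariant and \emph{mixing} for $S_q$; by the pure-type dichotomy and the Birkhoff theorem, each $\lam_j$ is therefore either singular or \emph{equal to} Lebesgue measure. This upgrades ``nonzero absolutely continuous part'' to ``exactly Lebesgue'', and in particular to local dimension $1$ at the specific point $\om=0$ --- whereas your $D$ could a priori vanish near $0$, so you have no way to localize. Second, \cite{BerSol} writes each $\lam_j$, $j\ge1$, as a \emph{positive} combination of spectral measures of cylindrical functions orthogonal to constants, passes to the constant-roof suspension flow via Lemma~\ref{lem-equivalence}, and invokes Corollary~\ref{cor:zero}: at $\om=0$ the spectral cocycle degenerates to the constant matrix $\Sf_\zeta^{\sf T}$, so elementary linear algebra gives $\underline{d}(\sig_f,0)=2-2\log|\theta_k|/\log q$, where $\theta_k$ is the leading eigenvalue of $\Sf_\zeta$ whose spectral projection does not annihilate $\vec z=(\what\psi_j(0))_{j}$ (and $\underline{d}(\sig_f,0)\ge 2$ if $|\theta_k|\le 1$). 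Equating this with $1$ forces $|\theta_k|=\sqrt q$, contradicting the hypothesis. In short, the modulus-$\sqrt q$ eigenvalue is extracted not from the refinement equation but from the local dimension at zero, and the $q$-mixing dichotomy is precisely what makes evaluation at the single point $\om=0$ legitimate. If you want to salvage your approach, you would need to supply an argument of comparable strength at this step; the one you gesture at is, to our knowledge, not known to work --- which is consistent with the fact that the analogous question for general bijective substitutions remains open.
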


\begin{remark} {\em
This condition for singularity is not necessary, see Chan and Grimm \cite{ChGr1} and \cite{BaGr}.
}
\end{remark}

In the rest of the section we describe the idea of the proof.
The key fact which distinguishes constant-length substitutions is that the ``renormalization'', or ``substitution'' action on $X_\zeta$ is closely related  to the times-$q$ map on the torus: $S_q\om\mapsto q\om$ (mod 1) on $\R/\Z\equiv [0,1]$, where $q$
is the length of the substitution. A measure on the torus is called $q$-mixing if it is invariant and mixing with respect to the times-$q$ map. By the Birkhoff Ergodic Theorem, any $q$-mixing measure is either singular or coincides with the Lebesgue (Haar) measure on the torus.
Further, let
\be \label{eq-convo}
\eta = \sum_{n=1}^\infty 2^{-n}\eta_{q^n},
\ee
where $\eta_{q^n}$ is the Haar measure on the finite group generated by $1/q^n$ on the torus.
Queffelec proved in \cite[Th.\,10.2 and Th.\,11.1]{Queff} that the maximal spectral type of the substitution automorphism is given by
$$
\wtil{\lam} = \lam * \eta,
$$
where $\lam = \lam_0 + \cdots +\lam_{k-1}$, with every $\lam_j$ being $q$-mixing, and moreover, $\lam_j$ is a linear  combination of correlation measures $\sig_{ab},\ a,b\in \Ak$, with $\lam_0 = \delta_0$. It follows that in order to prove singularity we only need to rule out that any of the $\lam_j$, $j=1,\ldots,k-1$, is the Lebesgue measure.
(In fact, the invariance of the matrix of correlation measures $\Sig_\zeta$ under $S_q$ follows from Lemma~\ref{lem-Riesz}; the measures $\lam_j$ arise from the
diagonalization of $\Sig_\zeta$.)

It was shown in \cite{BerSol} that every $\lam_j$, $j\ge 1$, can be expressed as a {\em positive} linear combination of spectral measures corresponding to cylindrical functions orthogonal to the constants. By Lemma~\ref{lem-equivalence}, instead of the substitution $\Z$-action we can work with the  the suspension flow with the
constant-one roof function. Suppose that one of $\lam_j$, $j\ge 1$, is Lebesgue.
Since the local dimension of the Lebesgue measure is equal to one everywhere, it follows that there exists a cylindrical $f\perp \Onne$ such that
$\underline{d}(\sig_f,0) = 1$. By Corollary~\ref{cor:zero}, this implies that there exists an eigenvalue $\theta_k$ of $\Sf_\zeta$ such that $|\theta_k| = \sqrt{\theta_1} = \sqrt{q}$, as desired. \qed


\subsection{The Fourier matrix cocycle  and the diffraction spectrum}
Consider the {\em self-similar} suspension flow over a substitution, when the vector $\vec s = \bbe_1$, defining the roof function,
satisfies $\Sf_\zeta^{\sf T} \bbe_1 = \theta \bbe_1$, that is,  the Perron-Frobenius eigenvector for  $\Sf_\zeta^{\sf T}$. 
Then we get
\be \label{coc2}
\Cc_\zeta(\om \bbe_1,n):= \Cc_\zeta\bigl(\theta^{n-1}\om \bbe_1\bigr)\cdot \ldots \cdot \Cc_\zeta(\theta\,\om\bbe_1)\cdot \Cc_\zeta(\om \bbe_1).
\ee
Baake and collaborators  \cite{BFGR,BGM,BG2M} 
considered  (\ref{coc2}) as a cocycle on $\R$ over the infinite-measure preserving action $\om\mapsto \theta \om$, which they called the {\em Fourier matrix cocycle}. 
They studied the diffraction spectrum of substitution systems in the self-similar case, which is closely related to the dynamical spectrum, see Section~\ref{sec:diffract}.
In particular, singularity of the maximal spectral type implies singularity of
the diffraction spectrum measure. 
The diffraction measure is represented as a matrix Riesz product, in the spirit of
Lemma~\ref{lem-Riesz} for substitution $\Z$-actions.
The starting point in their approach
is the idea of Kakutani, described in Section 3.2, namely, to analyze the renormalization relations satisfied by the absolutely continuous component of the diffraction measure.
Baake, Grimm, and Ma\~nibo \cite[Theorem 3.28]{BG2M} proved that if the upper Lyapunov exponent satisfies a condition, which in our notation can be stated as 
$\chi^+_{\zeta,\om\bbe_1}< \half\log\theta-\eps$, for some $\eps>0$, for Lebesgue-a.e.\ $\om\in \R$, then the absolute component must be trivial, and hence
the diffraction measure is singular. Effective numerical methods allowed the authors of \cite{BG2M} to verify this in many examples.

In fact, this approach was first developed by  Baake, Frank, Grimm, and Robinson \cite{BFGR} for a specific 
non-Pisot substitution on a two-letter alphabet $0\to 0111,\ 1\to 0$. Later
Baake, Grimm, and Ma\~nibo \cite{BGM}  extended the result on singularity of the spectrum
 to the family of substitutions $0\to 01^k,\ 1\to 0$, with $k\ge 4$. Then Baake, G\"ahler, Grimm, and
 Ma\~nibo \cite{BG2M} considered general primitive substitutions and extended the method to higher-dimensional self-similar tiling substitutions.


\section{Regularity of spectral measures}

Whereas the results of the previous section say something about the local dimension of spectral measures at almost every point, much more subtle is the question of {\em uniform}  
 local dimension bounds. This question is closely related to {\em quantitative rates of weak mixing}. First, a few words of motivation. If the substitution flow
has a non-trivial eigenvalue $\om\in\R$, then the spectral measure of a test function (not orthogonal to  the corresponding eigenfunction) has a point mass at $\om$ and
hence no quantitative bound is possible in a neighborhood of $\om$; the local dimension is zero. 
For flows that are known to be weakly mixing, there are no point masses, and it 
is natural to ask whether one  can get quantitative bounds for the modulus of continuity, ideally, H\"older bounds of the form $\sig_f(B_r(\om)) \le Cr^\alpha$. To have any hope
for such bounds to be uniform, we either need to assume that $f$ is orthogonal to constants, in order to avoid the trivial point mass at zero, or take $|\om|> \delta>0$ for
some fixed $\delta$.

In view of Lemma~\ref{lem-spec2} and \eqref{ki2}, we need to estimate from  above the norm of the matrix product appearing in the formula for the spectral cocycle
\eqref{cocycle3}. First, a definition.

\begin{defi}
A word $v\in \A^+$ is called a {\em return word} for the substitution $\zeta$ if $v$ separates two successive occurrences of the same letter in elements of $X_\zeta$; more precisely, $vc$ occurs in some
(and then all) $x\in X_\zeta$, where $c$ is the first letter of $v$.  We say that $v$ is a {\em good return word} for the substitution if $vc$ occurs in every word $\zeta(b),\ b\in \A$.
\end{defi}

We can always pass from $\zeta$ to $\zeta^k$ without affecting the substitution space, and thus we can assume that all return words are good. Below we write $\|x\|_{_{\R/\Z}}=
\dist(x,\Z)$ for $x\in \R$.

\begin{prop}[{\cite[Prop.\,4.4]{BuSo14}}] \label{prop-cocy}
Let $\zeta$ be a primitive aperiodic substitution on $\A$ and $v$ a good return word for $\zeta$. Let $\theta$ be the Perron-Frobenius eigenvalue for $\Sf_\zeta$ and $\vec s\in \R^d_+$.
Then there exist $c_1\in (0,1)$ and $C>0$ depending only on the 
substitution $\zeta$ and on $\vec s$, such that for any  $n\in \N$ and $\om\in \R$, 
\be \label{cu3}
\bigl\|\Cc_\zeta(\om\vec s, n)\bigr\| \le C \theta^n \prod_{k=0}^{n-1} \Bigl( 1 - c_1 \bigl\|\om|\zeta^k(v)|_{\vec{s}}\bigr\|_{_{\R/\Z}}^2\Bigr).
\ee
\end{prop}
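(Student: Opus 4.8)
The plan is to deduce \eqref{cu3} from an entrywise estimate on each factor of the product \eqref{cocycle3}, and then to push that estimate through a weighted operator norm tailored to the Perron--Frobenius structure of $\Sf_\zeta^{\sf T}$. Write $c=v_0$ for the first letter of the return word $v$. By \eqref{cu2}, the $(b,c)$-entry of the $k$-th factor $\Cc_\zeta\bigl((\Sf_\zeta^{\sf T})^k\om\vec s\bigr)$ is a sum of $(\Sf_\zeta^{\sf T})(b,c)$ unimodular terms, one per occurrence of $c$ in $\zeta(b)$, the phase of the term being $-2\pi\om|\zeta^k(p)|_{\vec s}$ for the prefix $p$ preceding that occurrence. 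Since $v$ is a good return word, $vc$ occurs inside every $\zeta(b)$; this furnishes, for each $b$, two occurrences of $c$ whose preceding prefixes differ exactly by $v$, hence whose phases differ by $-2\pi\om|\zeta^k(v)|_{\vec s}$. Pairing these two terms and factoring out the common phase, their combined modulus is $\bigl|1+e^{-2\pi i\om|\zeta^k(v)|_{\vec s}}\bigr| = 2\bigl|\cos(\pi\om|\zeta^k(v)|_{\vec s})\bigr|$, which by $|\sin\pi t|\ge 2\|t\|_{\R/\Z}$ and $\sqrt{1-x}\le 1-\tfrac{x}{2}$ is at most $2\bigl(1-2\bigl\|\om|\zeta^k(v)|_{\vec s}\bigr\|_{\R/\Z}^2\bigr)$.

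Bounding the remaining $(\Sf_\zeta^{\sf T})(b,c)-2$ terms of that entry trivially by $1$, and bounding off-column entries $(b,c')$ with $c'\ne c$ trivially by $(\Sf_\zeta^{\sf T})(b,c')$, I obtain the entrywise inequality $\bigl|\Cc_\zeta\bigl((\Sf_\zeta^{\sf T})^k\om\vec s\bigr)\bigr|\le B_k:=\Sf_\zeta^{\sf T}-\delta_k E_c$, where $\delta_k=4\bigl\|\om|\zeta^k(v)|_{\vec s}\bigr\|_{\R/\Z}^2$, $E_c$ is the matrix with $1$'s in column $c$ and $0$ elsewhere, and $|\cdot|$ is the entrywise modulus. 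Note $B_k\ge 0$ entrywise, since the good return word forces $(\Sf_\zeta^{\sf T})(b,c)\ge 2$ while $\delta_k\le 1$. Now, since $\Cc_\zeta(\om\vec s,n)$ is the ordered product $\Cc_\zeta\bigl((\Sf_\zeta^{\sf T})^{n-1}\om\vec s\bigr)\cdots\Cc_\zeta(\om\vec s)$ by \eqref{cocycle3}, the triangle inequality $|AB|\le|A|\,|B|$ combined with monotonicity of multiplication of nonnegative matrices gives $\bigl|\Cc_\zeta(\om\vec s,n)\bigr|\le B_{n-1}\cdots B_0$ entrywise.

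To convert this into a bound on the norm of the product I would introduce the weighted sup-operator norm $\|A\|_{\vec u}=\max_b u_b^{-1}\sum_{c'}|A_{bc'}|u_{c'}$, where $\vec u$ is the strictly positive Perron--Frobenius eigenvector of $\Sf_\zeta^{\sf T}$. This norm depends only on entrywise moduli and is monotone on nonnegative matrices, so $\|\Cc_\zeta(\om\vec s,n)\|_{\vec u}=\bigl\||\Cc_\zeta(\om\vec s,n)|\bigr\|_{\vec u}\le\|B_{n-1}\cdots B_0\|_{\vec u}$; it is also submultiplicative and satisfies $\|\Sf_\zeta^{\sf T}\|_{\vec u}=\theta$. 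A one-line computation gives $B_k\vec u=\theta\vec u-\delta_k u_c\one$, hence $\|B_k\|_{\vec u}=\theta-\delta_k u_c/(\max_b u_b)=\theta(1-c_1'\delta_k)$ with $c_1'=u_c/(\theta\max_b u_b)>0$. Submultiplicativity then yields $\|\Cc_\zeta(\om\vec s,n)\|_{\vec u}\le\theta^n\prod_{k=0}^{n-1}(1-c_1'\delta_k)$, and passing back to the fixed ambient norm by equivalence of norms (this produces the constant $C$), together with $\delta_k=4\bigl\|\om|\zeta^k(v)|_{\vec s}\bigr\|_{\R/\Z}^2$, gives \eqref{cu3} with $c_1=\min(4c_1',\half)\in(0,1)$ (taking $c_1$ smaller than $4c_1'$ only weakens the bound).

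The main obstacle, and the reason a naive route fails, is exactly the passage from a per-factor entrywise estimate to a bound on a long product: entrywise control of each matrix does not in general control the norm of the product, because of interference across factors. The weighted Perron--Frobenius norm is what resolves this, precisely because it is simultaneously (i) determined by entrywise moduli, so it respects domination by the nonnegative $B_k$; (ii) submultiplicative; and (iii) normalized so that $\Sf_\zeta^{\sf T}$ has norm exactly $\theta$, which lets the per-factor defects $(1-c_1'\delta_k)$ accumulate multiplicatively with no spurious geometric loss. The only genuinely substitution-theoretic input is the pairing in the first step, which is where the hypothesis that $v$ is a \emph{good} return word enters.
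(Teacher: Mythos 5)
Your proof is correct. Its substitution-theoretic core is the same as the paper's: you write $\zeta(b)=p^b vc\,q^b$ using the good return word, pair the two occurrences of $c$ whose prefixes differ exactly by $v$, and obtain the per-factor entrywise bound $\bigl|[\Cc_\zeta((\Sf_\zeta^{\sf T})^k\om\vec s)]_{(b,c)}\bigr|\le \Sf_\zeta^{\sf T}(b,c)-2+\bigl|1+e^{-2\pi i\om|\zeta^k(v)|_{\vec s}}\bigr|$, exactly as in the paper (your elementary inequality even gives the sharper constant $4$ in place of the paper's $\half$, which is immaterial). Where you genuinely differ is in the propagation step, which the paper only sketches and delegates to \cite[Prop.\,3.5 and 4.4]{BuSo14}: there the entrywise deficiency is pushed through the product inductively (``each multiplication by a factor gains $1-c_1\bigl\|\om|\zeta^k(v)|_{\vec s}\bigr\|_{_{\R/\Z}}^2$ over multiplication by $\Sf_\zeta^{\sf T}$''), and the norm bound is recovered at the end from the Perron--Frobenius asymptotics $\|(\Sf_\zeta^{\sf T})^n\|\sim\theta^n$. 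You instead dominate each factor by the nonnegative matrix $B_k=\Sf_\zeta^{\sf T}-\delta_k E_c$ and evaluate the product in the weighted operator norm built from the PF eigenvector $\vec u$ of $\Sf_\zeta^{\sf T}$, in which $\|\Sf_\zeta^{\sf T}\|_{\vec u}=\theta$ and $\|B_k\|_{\vec u}=\theta(1-c_1'\delta_k)$ hold \emph{exactly}, so submultiplicativity finishes the argument in one line. This buys a self-contained proof with explicit constants (depending only on $\zeta$ and $v$), avoiding both the induction over factors and the appeal to $\|(\Sf_\zeta^{\sf T})^n\|\sim\theta^n$. The point to be aware of is that your norm computation needs the deficiency to occupy an entire column, and needs $B_k\ge 0$, i.e.\ $\Sf_\zeta^{\sf T}(b,c)\ge 2$; both are precisely what ``goodness'' of $v$ --- the occurrence of $vc$ in every $\zeta(b)$ --- supplies. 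If the deficiency sat only at isolated entries, one would have to fall back on an entrywise propagation of the paper's type, invoking primitivity to spread the loss; under the stated hypotheses, your route is the cleaner one.
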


\begin{proof}[Idea of the proof of the proposition]
By the definition of a good return word, we can write $\zeta(b) = p^b vc q^b$, where $p^b$ and $q^b$ are words, possibly empty, and $v$ starts with $c$. By \eqref{cu2}, the expression for $\bigl[\Cc_\zeta\bigl( (\Sf_\zeta^{\sf T})^k \om\vec s\bigr)\bigr]_{(b,c)}$ includes the terms
$e^{-2\pi i \om |\zeta^k(p^b)|_{\vec{s}}}$ and $e^{-2\pi i \om |\zeta^k(p^b v)|_{\vec{s}}}$, hence
\begin{eqnarray*}
\Bigl|\bigl[\Cc_\zeta\bigl( (\Sf_\zeta^{\sf T})^k \om\vec s\bigr)\bigr]_{(b,c)}\Bigr| & \le & \Sf^{\sf T}_\zeta(b,c) - 2 + \bigl|1 + e^{-2\pi i \om|\zeta^k(v)|_{\vec s}}\bigr| \\
& \le &  \Sf^{\sf T}_\zeta(b,c) - \textstyle{\half} \bigl\|\om|\zeta^k(v)|_{\vec{s}}\bigr\|_{_{\R/\Z}}^2, 
\end{eqnarray*}
where we used an elementary inequality $|1 + e^{2\pi i \tau}| \le 2 - \half\|\tau\|_{_{\R/\Z}}^2,\ \tau\in \R$. Then one can show that multiplying the product of matrices by
$\Cc_\zeta\bigl( (\Sf_\zeta^{\sf T})^k \om\vec s\bigr)$, we ``gain'' in an upper bound a factor of $1 - c_1 \bigl\|\om|\zeta^k(v)|_{\vec{s}}\bigr\|_{_{\R/\Z}}^2$, compared to 
the multiplication by $\Sf_\zeta^{\sf T}$. Using that $\|(\Sf_\zeta^{\sf T})^n\| \sim \theta^n$ yields the bound \eqref{cu3}. For more details, see \cite[Prop.\,3.5 and 4.4]{BuSo14}.
\end{proof}

\smallskip

The formula \eqref{cu3} suggests that Diophantine approximation issues may play a role. This is especially transparent in the special case of self-similar substitution flow,
when $\vec s = \bbe_1$ satisfies $\Sf_\zeta^{\sf T} \bbe_1 = \theta \bbe_1$, that is,  the Perron-Frobenius eigenvector for  $\Sf_\zeta^{\sf T}$. By \eqref{Der}, we then have 
$|\zeta^k(v)|_{\bbe_1} = \theta^k |v|_{\bbe_1}$ and \eqref{cu3} becomes
\be \label{Der4}
\bigl\|\Cc_\zeta(\om\bbe_1, n)\bigr\| \le C \theta^n \prod_{k=0}^{n-1} \bigl( 1 - c_1  \bigl\|\om |v |_{\bbe_1}\cdot \theta^k \bigr\|_{_{\R/\Z}}^2\bigr).
\ee
Recall the classical results, mostly due to Pisot, and a famous open problem, see \cite{Salem_book}:

\begin{itemize}
\item The set $\Sk:= \{\theta>1:\ \exists\, x>0$ such that  $\|\theta^k x\|_{_{\R/\Z}} \to 0\}$ contains the set of Pisot numbers. For Pisot $\theta$ holds $\|\theta^k x\|_{_{\R/\Z}}\le C\rho^k$ for 
$x\in \Z[\theta]$, with  $\rho\in (0,1)$ (the maximal modulus of a Galois conjugate).
\item If $\theta\in \Sk$ is algebraic or if $\sum_k \|\theta^k x\|_{_{\R/\Z}}^2 < \infty$ for $\theta>1$ and $x>0$, then $\theta$ is Pisot.
\item It is an {\bf open problem} whether $\Sk$ contains any transcendental numbers; however, Salem showed that $\Sk$ is countable.
\end{itemize}

Thus, \eqref{Der4} does not give anything useful (at least, uniformly in $\om$) when $\theta$ is Pisot, which is not surprising since then the substitution flow has
a large discrete component. In fact, the following holds.

\begin{theorem}[{\cite{SolTil}}] \label{th-Pisot}
Given a primitive aperiodic substitution $\zeta$, the self-similar substitution flow $(\Xx_\zeta^{\bbe_1}, h_t,\wt\mu)$ is weakly mixing if and only if $\theta$ is not
a Pisot number.
\end{theorem}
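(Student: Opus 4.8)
The plan is to reduce weak mixing to the absence of nonzero eigenvalues and then to characterize the eigenvalues of the self-similar flow by a Diophantine condition, which I match against the Pisot dichotomy recalled just above the statement. Recall that $(\Xx_\zeta^{\bbe_1},h_t,\wt\mu)$ is weakly mixing precisely when the flow has no nonzero eigenvalue, i.e.\ there is no $\om\neq 0$ admitting $f\in L^2(\Xx_\zeta^{\bbe_1},\wt\mu)$, $f\not\equiv 0$, with $f\circ h_\tau=e^{2\pi i\om\tau}f$ for all $\tau\in\R$. The self-similar structure enters through the renormalization map $Z$ with $Z\circ h_\tau=h_{\theta\tau}\circ Z$: if $f$ is an eigenfunction for $\om$, then $f\circ Z$ is an eigenfunction for $\theta\om$, so the set of eigenvalues is invariant under multiplication by $\theta$. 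This invariance is what makes the orbit $\{\om\theta^n\}$, and hence the arithmetic of $\theta$, decisive.

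The central step is the eigenvalue criterion: \emph{$\om\in\R$ is an eigenvalue of the flow if and only if $\|\om\,|v|_{\bbe_1}\,\theta^n\|_{_{\R/\Z}}\to 0$ as $n\to\infty$ for every return word $v$.} Here I would exploit that, in the self-similar case, \eqref{Der} gives $|\zeta^n(v)|_{\bbe_1}=\theta^n|v|_{\bbe_1}$, so the successive return times to a fixed cylinder inside a level-$n$ supertile are exactly $\theta^n|v|_{\bbe_1}$. For necessity I would take an eigenfunction with $|f|=1$ a.e.\ (possible since the flow is uniquely ergodic, hence ergodic) and use repetitivity: the hierarchical structure produces, for each $n$, two points of $\Xx_\zeta^{\bbe_1}$ that agree on a large region up to the translation $h_{\theta^n|v|_{\bbe_1}}$, and comparing the values of $f$ at these points via unique ergodicity forces $e^{2\pi i\om\theta^n|v|_{\bbe_1}}\to 1$. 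For sufficiency I would build the eigenfunction by a telescoping construction along the levels of the supertile hierarchy, whose $L^2$ convergence is guaranteed by the rate $\|\om\,|v|_{\bbe_1}\,\theta^n\|_{_{\R/\Z}}\to 0$. This criterion is the main obstacle: it is where the geometry of the tiling flow and its unique ergodicity are used in an essential way, and it requires care to treat the return-word structure uniformly over the alphabet.

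Granting the criterion, both implications follow from the classical facts recalled before the statement. First, $\theta$, being the Perron--Frobenius eigenvalue of the integer matrix $\Sf_\zeta$, is an algebraic integer. If $\theta$ is not Pisot and some $\om\neq 0$ were an eigenvalue, then for a return length $\ell=|v|_{\bbe_1}>0$ we would have $\|(\om\ell)\theta^n\|_{_{\R/\Z}}\to 0$ with $\om\ell\neq 0$; taking $x=|\om\ell|>0$ (the norm is even) shows $\theta\in\Sk$, and since $\theta$ is algebraic this forces $\theta$ to be Pisot, a contradiction. Hence no nonzero eigenvalue exists and the flow is weakly mixing. Conversely, if $\theta$ is Pisot, choose $\om\neq 0$ in $\Q(\theta)$ so that $\om\,|v|_{\bbe_1}\in\Z[\theta]$ for every return word $v$; this is possible since $\bbe_1$ has entries in $\Q(\theta)$ and every return length is an integer combination of these entries. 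Then the first Pisot fact gives $\|\om\,|v|_{\bbe_1}\,\theta^n\|_{_{\R/\Z}}\le C\rho^n\to 0$, with $\rho\in(0,1)$ the maximal modulus of a Galois conjugate of $\theta$, so the criterion produces a nonzero eigenvalue and the flow is not weakly mixing.

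As a consistency check, the cocycle bound \eqref{Der4} agrees with this picture: when $\theta$ is not Pisot the factors $1-c_1\|\om\,|v|_{\bbe_1}\,\theta^k\|_{_{\R/\Z}}^2$ prevent $\|\Cc_\zeta(\om\bbe_1,n)\|$ from reaching the maximal growth $\theta^n$, which is exactly the rate associated with an atom of $\sig_f$ at $\om$ via \eqref{ki2}, whereas for Pisot $\theta$ these factors stay bounded away from $0$ along the relevant frequencies, leaving room for point masses. I expect the eigenvalue criterion to be the crux of the argument, the remaining arithmetic being a direct application of Pisot's theorems.
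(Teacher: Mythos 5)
Your proposal is correct and follows essentially the same route as the paper's source [SolTil] and the discussion surrounding the statement: reduce weak mixing to the absence of nonzero eigenvalues, characterize eigenvalues by the Diophantine condition $\|\om|\zeta^k(v)|_{\bbe_1}\|_{_{\R/\Z}}=\|\om|v|_{\bbe_1}\theta^k\|_{_{\R/\Z}}\to 0$ (this is exactly Theorem~\ref{th-Pisot}'s companion criterion, Theorem~\ref{th-CS}, specialized via \eqref{Der}), and then invoke Pisot's theorems: algebraicity of $\theta$ plus $\theta\in\Sk$ forces Pisot in one direction, and the exponential bound $\|\theta^n x\|_{_{\R/\Z}}\le C\rho^n$ for $x\in\Z[\theta]$ produces a nonzero eigenvalue in the other. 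The only remark is that you need not reprove the eigenvalue criterion (your necessity sketch should compare $f$ on a positive-measure set of pairs, approximating $f$ by cylindrical functions, rather than at ``two points'' of an $L^2$ function); citing Theorem~\ref{th-CS} suffices, and the rest of your arithmetic argument is complete.
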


\subsection{Digression: Bernoulli convolutions}
Another area where Pisot numbers appear, is the study of {\em infinite Bernoulli convolutions}. 
For $\lam\in (0,1)$, the {\em classical} infinite Bernoulli convolution measure $\nu_\lam$ is defined as the distribution of the random
series $\sum_{n=0}^\infty \pm \lam^n$, where the signs are chosen i.i.d., with probabilities $(\half,\half)$. The  {\em biased} Bernoulli convolution $\nula^p$ is defined the same way, but the probabilities of the signs are $(p,1-p)$ for some $p\ne \half$. For $\lam< \half$ the Bernoulli convolution $\nula$ is the Cantor-Lebesgue measure on a self-similar Cantor set 
of contraction ratio $\lam$, but for $\lam>\half$ it is a self-similar measure with ``overlaps'' which makes its analysis difficult.

By the ``pure types'' law, $\nu_\lam^p$ is either singular or absolutely continuous.
The Fourier transform of $\nu_\lam^p$ is given by
$$
\widehat{\nu}_\lam^p(\xi) = \prod_{n=0}^\infty \bigl(pe^{-2\pi i \lam^n \xi} + (1-p) e^{2\pi i \lam^n \xi}\bigr)
$$
Erd\H{o}s \cite{Erd0} proved that $\widehat{\nu}_\lam^p(\xi)\not\to 0$ as $\xi\to\infty$ when $\lam^{-1}$ is a Pisot number, hence such $\nu^p_\lam$ are singular. Salem \cite{Salem} showed that this characterizes Pisot numbers, namely, for all other $\lam$ the Fourier transform vanishes at infinity. However, Salem's
 proof does not yield any quantitative estimates of the decay of the Fourier transform. Using a combinatorial argument, essentially a ``quantitative upgrade'' of Salem's proof that $\Sk$ is
 countable, Erd\H{o}s
\cite{Erd} proved that for almost every $\lam$ the Fourier transform has some power decay, which he then used to show that $\nu^p_\lam$ is absolutely continuous for almost every $\lam$ sufficiently close to one. (Erd\H{o}s considered only  $p=\half$, but the same proof works for an arbitrary $p\in (0,\half)$.) Kahane \cite{Kahane} observed that Erd\H{o}s' proof actually gives an estimate of the dimension of the exceptional set, and in fact the power decay of the Fourier transform (with {\em some} power depending on $\lam$ and $p$) holds for all $\lam$ outside a set of Hausdorff dimension zero. This technique came to be 
known as the ``Erd\H{o}s-Kahane argument''. However, their method is ``generic'' and does not provide a single specific example.
As far as concrete examples go,
Garsia \cite{Garsia} proved that $\nula$ is absolutely continuous, with an $L^\infty$ density, when $\theta = \lam^{-1}$ is an algebraic integer, all of whose conjugates lie outside of
the unit circle and
the constant term of the minimal polynomial is $\pm 2$. (Here it is important that the Bernoulli convolution is unbiased!) 
Very recently, Yu \cite{Yu} showed that for these ``Garsia numbers'' the density is even continuous.
While dramatic progress in our understanding of Bernoulli convolutions occurred in the last decade,
 it is still an open problem whether there are any non-Pisot parameters $\theta \in (1,2)$ for which $\nu_{1/\theta}$ is singular. As of this writing, the record for a ``typical'' parameter belongs to Shmerkin \cite{Shmerkin}, who proved, in particular, that for all $\lam\in (\half,1)$ outside of a set of zero Hausdorff  dimension, $\nula$ is absolutely continuous with a density in $L^q$ for all $q<\infty$. See the survey by Peres, Schlag and Solomyak \cite{PSS}
 on the status of the problem around 2000 and that of Varj\'u \cite{Varju} on the progress until 2018.
 
 Another open problem is to determine for which $\lam$ and $p$ the Fourier transform of the measure $\nula^p$ has a power decay at infinity. 
 The only known case is again Garsia numbers, and only 
 for $p=\half$, due to Dai, Feng, and Wang \cite{DFW}.
 
 An elementary inequality, as in \eqref{Der4}, yields
 $$
 |\widehat{\nu}_{1/\theta}^p(\xi) | \le \prod_{n=0}^N \Bigl( 1 - {\frac{1-p}{2}}\|2\theta^n \zeta\|_{_{\R/\Z}}^2\Bigr),\ \ \mbox{for}\ \zeta = \theta^{-N} \xi,\ \xi \in [\theta^{-N},\theta^{N+1}].
 $$
 We can clearly see a connection with \eqref{Der4}. In fact, if $\what{\nu}_\lam^p$ for $p\ne \half$,  
 has a power decay at infinity for $\lam = \theta^{-1}$, where $\theta$ is from \eqref{Der4}, then the spectral measures of the  self-similar suspension flow $(\Xx_\zeta^{\bbe_1}, h_t,\wt\mu)$
 are H\"older continuous. This is still unknown, but we were able to obtain a weaker, power of a logarithm, bound in \cite[Cor.\,A.3 and Th.\,5.1]{BuSo14}.
 
 \begin{theorem}[{\cite{BuSo14}}] \label{th-log}
 
 {\rm (i)} Let $\theta>1$ be an algebraic integer which has at least one Galois conjugate outside the unit circle, and let $\lam = \theta^{-1}$. Then for any $p\in (0,1)$ there exists $\alpha>0$ such that
$$
\sup_{\xi\in \R} |\widehat{\nu}^p_\lam(\xi)| (\log(2+|\xi|))^\alpha < \infty.
$$

{\rm (ii)} Let $\zeta$ be an aperiodic primitive substitution whose substitution matrix has the PF eigenvalue $\theta$ as in part (i). Consider the self-similar suspension flow $(\Xx_\zeta^{\bbe_1}, h_t,\wt\mu)$. Then there exists $\gam>0$ such that for any $B>1$, there exists $C_B>0$ such that for any simple cylindrical function $\phi$ the spectral measure $\sig_\phi$ satisfies
$$
\sig_\phi(B_r(\om)) \le C_B \bigl(\log (1/r)\bigr)^{-\gam},\ \ \mbox{for all}\ |\om|\in [B^{-1},B]\ \mbox{and}\ r>0.
$$
 \end{theorem}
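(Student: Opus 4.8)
The plan is to reduce both parts to a single \emph{uniform} Erd\H{o}s--Kahane estimate on how often the orbit $\{x\theta^k\}$ avoids the integers, and then to feed this into the two product representations already at hand: the elementary inequality displayed just before the statement for part (i), and the self-similar cocycle bound \eqref{Der4} for part (ii). The estimate I would establish is the following: \emph{since $\theta$ is an algebraic integer possessing a Galois conjugate of modulus $>1$, there are $\delta>0$, $c>0$, $C_0>0$ (depending only on $\theta$ and on a fixed compact range for $x$ bounded away from $0$) such that}
\[
\#\bigl\{0\le k\le N:\ \|x\theta^k\|_{_{\R/\Z}}\ge\delta\bigr\}\ \ge\ c\log N-C_0\qquad(N\in\N).
\]
Granting this, any product $\prod_{k=0}^{N}\bigl(1-b\,\|x\theta^k\|_{_{\R/\Z}}^2\bigr)$ with $b>0$ is at most $(1-b\delta^2)^{c\log N-C_0}=C_1N^{-\alpha}$, with $\alpha=-c\log(1-b\delta^2)>0$: a product of $N$ near-unit factors decays like a fixed negative power of $N$, which is exactly a negative power of $\log|\xi|$ (resp.\ of $\log(1/r)$) after the substitution $N\asymp\log|\xi|$ (resp.\ $N\asymp\log(1/r)$).

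The heart of the matter, and the step I expect to be the main obstacle, is the uniform estimate; I would prove it as a quantitative sharpening of Salem's argument that $\Sk$ is countable. Write $x\theta^k=a_k+\epsilon_k$ with $a_k\in\Z$ and $\epsilon_k\in(-\tfrac12,\tfrac12]$, so that $\|x\theta^k\|_{_{\R/\Z}}=|\epsilon_k|$, and let $t^{D}-c_{D-1}t^{D-1}-\cdots-c_0$ be the (irreducible) minimal polynomial of $\theta$. Multiplying the relation $\theta^{D}=\sum_j c_j\theta^j$ by $x\theta^k$ produces the integer $\eta_k:=a_{k+D}-\sum_{j=0}^{D-1}c_ja_{k+j}=\sum_{j=0}^{D-1}c_j\epsilon_{k+j}-\epsilon_{k+D}$, whose modulus is bounded by a constant; hence with $\delta:=\bigl(\sum_j|c_j|+1\bigr)^{-1}$ a ``bad'' step $\eta_k\ne0$ forces $\max_{0\le j\le D}\|x\theta^{k+j}\|_{_{\R/\Z}}\ge\delta$, and it suffices to show that bad steps recur at a logarithmic rate. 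Let $\theta'$ be a conjugate with $|\theta'|>1$ and let $\pi'$ be the left eigenfunctional of the companion matrix for $\theta'$; applied to the state $v_k:=(a_k,\dots,a_{k+D-1})$ it gives $w_k:=\pi'(v_k)$, which satisfies $w_{k+1}=\theta'w_k+\eta_k\,p'$ for a constant $p'$. Since $v_k=x\theta^k(1,\theta,\dots,\theta^{D-1})-(\epsilon_k,\dots,\epsilon_{k+D-1})$ and $\pi'$ annihilates the $\theta$-eigenvector, $|w_k|=|\pi'(\epsilon_k,\dots,\epsilon_{k+D-1})|\le B_0$ is \emph{bounded for all $k$}. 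On a run of consecutive good steps $w_{k+1}=\theta'w_k$, so $|w_k|$ is multiplied by $|\theta'|>1$ at every step while staying $\le B_0$; and because the components of $\pi'$ are $\Q$-linearly independent, $\Z^{D}\cap\ker\pi'=\{0\}$, so $w_k\ne0$ once $v_k\ne0$, and the product formula bounds $|w_k|$ below by $c_5\,\rho^{-k}$ for a fixed $\rho>1$. Combining the geometric growth with the uniform bound $|w_k|\le B_0$ shows that a good run beginning at position $k_0$ has length $O(k_0)$; hence successive bad positions grow at most geometrically, which yields at least $c\log N-C_0$ of them below $N$, uniformly in $x$. This is the delicate combinatorial core.

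Part (i) is then immediate. Given large $|\xi|$, set $N=\lfloor\log|\xi|/\log\theta\rfloor$ and $x=2\theta^{-N}\xi$, which lies in the compact interval $[2,2\theta)$; the elementary inequality preceding the theorem becomes
\[
\bigl|\widehat{\nu}^p_\lam(\xi)\bigr|\ \le\ \prod_{k=0}^{N}\Bigl(1-\tfrac{1-p}{2}\,\|x\theta^k\|_{_{\R/\Z}}^2\Bigr)\ \le\ C_1N^{-\alpha},
\]
and $N\asymp\log(2+|\xi|)$ gives the stated bound (for $|\xi|$ bounded the claim is trivial since $|\widehat{\nu}^p_\lam|\le1$). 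Note that the presence of a conjugate of modulus $>1$ is precisely what makes $\theta\notin\Sk$, so the hypothesis of (i) is the natural one.

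For part (ii) I would combine \eqref{Der4} with the spectral lemma. Putting $x=\om|v|_{\bbe_1}$, which ranges over a fixed compact subset of $(0,\infty)$ as $|\om|\in[B^{-1},B]$, the uniform estimate applied to \eqref{Der4} gives
\[
\bigl\|\Cc_\zeta(\om\bbe_1,n)\bigr\|\ \le\ C\,\theta^n\,(1-c_1\delta^2)^{c\log n-C_0}\ \le\ C'\,\theta^n\,n^{-\beta},\qquad\beta>0,
\]
uniformly in such $\om$. Following the upper-bound half of the proof of Theorem~\ref{thm:local1} (decompose an arbitrary orbit segment into substituted blocks so as to bound the twisted Birkhoff integral $S^{(x,t)}_R(\phi,\om)$ uniformly in $(x,t)$ by $\|\Cc_\zeta(\om\bbe_1,n)\|\,\|\vec z\|$, where $\vec z=(\what\psi_j(\om))_j$ stays bounded for $|\om|\ge B^{-1}$), one obtains for $R\asymp\theta^n$ the estimate
\[
G_R(\phi,\om)\ \le\ C\,R^{-1}\bigl\|\Cc_\zeta(\om\bbe_1,n)\bigr\|^2\,\|\vec z\|^2\ \le\ C''\,R\,(\log R)^{-2\beta}.
\]
Finally, the content of Lemma~\ref{lem-spec2}(i) is the Hof-type bound $\sig_\phi(B_r(\om))\le\pi^2R^{-1}G_R(\phi,\om)$ with $r=(2R)^{-1}$, so
\[
\sig_\phi(B_r(\om))\ \le\ C_B\,(\log(1/r))^{-2\beta}\qquad(|\om|\in[B^{-1},B]),
\]
which is the assertion with $\gam=2\beta$. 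The constant $C_B$ depends on $B$ exactly because the constants $\delta,c,C_0$ in the uniform Erd\H{o}s--Kahane estimate depend on how far $x$ is kept from $0$, i.e.\ on $B^{-1}$.
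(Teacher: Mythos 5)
Your proposal is correct and takes essentially the same route as the paper's: the survey reduces Theorem~\ref{th-log} to the two product representations (the elementary inequality for $\widehat{\nu}^p_\lam$ and the cocycle bound \eqref{Der4}, fed into Lemma~\ref{lem-spec2} via the block decomposition from the proof of Theorem~\ref{thm:local1}), plus a uniform quantitative Pisot-type estimate proved in the cited reference \cite{BuSo14}. Your key lemma, namely the uniform bound $\#\{0\le k\le N:\ \|x\theta^k\|_{\R/\Z}\ge\delta\}\ge c\log N-C_0$ for $x$ in a compact set bounded away from zero, established through the companion-matrix recursion, the bounded conjugate functional $w_k=\pi'(v_k)$, and the norm (product-formula) lower bound $|w_k|\ge c_5\theta^{-k}$, is exactly the content of that reference's appendix, and your derivation of it is sound.
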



\subsection{H\"older continuity of spectral measures for almost every substitution  flow} As it often happens, stronger results may be obtained for a ``generic'' or ``typical'' system.

Proposition~\ref{prop-cocy} may be compared with a theorem of Clark and Sadun:

\begin{theorem}[{Clark and Sadun \cite[Th.\,2.3]{CS03}}] \label{th-CS}
A number $\om\in \R$ is in the point spectrum of $(\Xx_\zeta^{\vec{s}}, h_t, \wt \mu)$ if and only if, for every return word $v$,
$$
\bigl\|\om|\zeta^k(v)|_{\vec{s}}\bigr\|_{_{\R/\Z}}\to 0\, \ \mbox{as}\ k\to \infty.
$$
\end{theorem}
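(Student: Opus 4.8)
The plan is to reformulate the statement as one about a continuous solution of a cohomological equation over the subshift $(X_\zeta,T)$, and then to read off the role of return words from the substitution hierarchy. Since the system is uniquely ergodic and an eigenfunction of an ergodic flow has constant modulus, any eigenfunction $f$ for the eigenvalue $\om$ may be taken with $|f|\equiv 1$. Writing a point of the suspension as $(x,t)$ and using $f\circ h_\tau=e^{2\pi i\om\tau}f$, one gets $f(x,t)=e^{2\pi i\om t}g(x)$ for some $g:X_\zeta\to\Tt$; compatibility with the identification $(x,s_{x_0})\sim(Tx,0)$ forces the cohomological equation
\[
g(Tx)=e^{2\pi i\om s_{x_0}}\,g(x)=e^{2\pi i\om|x_0|_{\vec s}}\,g(x),\qquad x\in X_\zeta.
\]
Thus $\om$ is an eigenvalue of the flow precisely when this equation has a measurable, and hence (for a primitive substitution, which is linearly recurrent) continuous, solution $g$. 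First I would record the continuity of eigenfunctions as the input upgrading a measurable solution to a continuous one. Iterating the equation over a block gives $g(T^{|w|}x)=e^{2\pi i\om|w|_{\vec s}}g(x)$ whenever $x$ begins with the word $w$, which is exactly why the tiling lengths $|\zeta^k(v)|_{\vec s}$, computed via \eqref{Der}, are the relevant quantities.

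For the necessity of the condition, fix a good return word $v$ with first letter $c$. Since $vc$ occurs in every $x\in X_\zeta$, the block $\zeta^k(vc)=\zeta^k(v)\,\zeta^k(c)$ also occurs; moreover $\zeta^k(v)$ begins with $\zeta^k(c)$ because $v$ begins with $c$. Hence for a suitable $x$ beginning with $\zeta^k(v)\zeta^k(c)$, both $x$ and $T^{|\zeta^k(v)|}x$ begin with the long block $\zeta^k(c)$, whose length tends to $\infty$. A uniform-continuity argument for $g$, combined with the relation $g(T^{|\zeta^k(v)|}x)=e^{2\pi i\om|\zeta^k(v)|_{\vec s}}g(x)$, then forces $e^{2\pi i\om|\zeta^k(v)|_{\vec s}}\to 1$, i.e.\ $\bigl\|\om|\zeta^k(v)|_{\vec s}\bigr\|_{_{\R/\Z}}\to 0$.

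For the sufficiency I would construct $g$ directly as a uniform limit along the substitution hierarchy, using the adic (Bratteli--Vershik) description of $(X_\zeta,T)$ in the spirit of Vershik's formalism. Each $x$ has a canonical expansion through the nested prefixes $\zeta^k(\,\cdot\,)$, and one assigns to $x$ the phase of $e^{2\pi i\om(\cdot)}$ accumulated along these prefixes; the hypothesis $\bigl\|\om|\zeta^k(v)|_{\vec s}\bigr\|_{_{\R/\Z}}\to 0$ over all return words is precisely what makes the successive phase increments tend to $0$ uniformly, so that the partial products form a Cauchy sequence in the sup norm and the limit $g$ is continuous and solves the cohomological equation.

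The main obstacle is that continuity of $g$ is measured in the two-sided metric on $X_\zeta$, whereas the return-word comparison matches only the \emph{future} coordinates of $x$ and $T^{|\zeta^k(v)|}x$, their pasts being different. This is the delicate point of \cite{CS03}: it is handled using linear recurrence (return words occur with bounded gaps) together with a compactness argument, or, equivalently, by working in the adic picture, where the eigenfunction is constant along the tail relation and the relevant comparison becomes intrinsically one-sided. A second subtle feature, special to the $\R$-action, is that mere convergence to $0$ of the return-word phases suffices, with no summability required, in contrast with the corresponding criteria for the $\Z$-action; this is the property that the sufficiency construction must be arranged to exploit.
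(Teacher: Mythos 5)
Your first task should have been to notice that the survey contains no proof of Theorem~\ref{th-CS}: it is quoted from Clark and Sadun \cite{CS03}, and the only internal material is the reformulation \eqref{cond-eig} and the stable-manifold discussion following the statement. So the comparison is with the argument of \cite{CS03}, and your frame does match it in spirit: $\om$ is an eigenvalue iff the multiplicative cohomological equation $g(Tx)=e^{2\pi i\om s_{x_0}}g(x)$ has a solution $g:X_\zeta\to\T$, with the (nontrivial but known) input that for these linearly recurrent systems measurable eigenfunctions have continuous versions (Host \cite{Host} for $\Z$-actions, and its suspension-flow analogue). Your necessity argument is essentially sound, and the two-sided-metric obstacle you flag has a much simpler repair than the linear-recurrence-plus-compactness route you gesture at: put $y=T^{|\zeta^k(v)|}x$ and $m\approx|\zeta^k(c)|/2$; since $x$ and $y$ have the \emph{same} length-$m$ prefix (a prefix of $\zeta^k(c)$), the accumulated phases cancel exactly and $g(T^mx)\,\ov{g(T^my)}=g(x)\,\ov{g(y)}$, while $T^mx$ and $T^my$ now agree on a symmetric window of radius about $|\zeta^k(c)|/2$, so uniform continuity of $g$ applies to \emph{them}. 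That closes necessity with no extra machinery.

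The genuine gap is in sufficiency. You construct $g$ as a limit of partial phase products along the substitution hierarchy and assert that the hypothesis --- phase increments tending to zero --- makes these partial products Cauchy in the sup norm. That implication is false: increments tending to zero do not make partial products converge, any more than $a_n\to 0$ makes $\sum_n a_n$ converge. You correctly single out the ``mere convergence, no summability'' phenomenon as the subtle feature of the theorem, but your construction is precisely where summability is consumed, and nothing in your sketch produces it. The missing idea --- the heart of the proof in \cite{CS03}, and also what underlies \eqref{cond-eig} and the discussion of the stable manifold $E^s$ in the survey --- is that here convergence to zero is \emph{automatically exponential}. Writing $\xi_k=\om(\Sf_\zeta^{\sf T})^k\vec s$, the hypothesis for all return words means, via \eqref{Der}, that $\xi_k=n_k+\eta_k$ with $\eta_k\to 0$ and $n_k$ in the dual lattice $L'$ of the $\Z$-module spanned by the return-word population vectors (assume this module has full rank, as in the survey's follow-up discussion; otherwise pass to the appropriate quotient torus). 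Since $\zeta$ maps return words to concatenations of return words, $L'$ is $\Sf_\zeta^{\sf T}$-invariant, so $\Sf_\zeta^{\sf T}n_k-n_{k+1}=\eta_{k+1}-\Sf_\zeta^{\sf T}\eta_k$ is a lattice vector tending to zero, hence eventually zero; thus eventually $\eta_{k+1}=\Sf_\zeta^{\sf T}\eta_k$ exactly. An orbit of a fixed linear map that tends to zero must lie in its stable subspace, so $\eta_k\to 0$ geometrically. Only this lemma supplies the summability your limit needs; without it the sufficiency direction does not close.
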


Recall that $\bigl| \zeta^k(v) \bigr|_{\vec s}= \bigl\langle \vec{\ell}(v), (\Sf_\zeta^{\sf T})^k {\vec s}\bigr\rangle$, see \eqref{Der}. To continue, let us assume that the
$\Z$-module generated by the population vectors of return words coincides with $\Z^d$. (In reality, for our argument it suffices that this module is a full rank lattice, which is
guaranteed, for instance, when the characteristic polynomial of $\Sf_\zeta$ is irreducible.) In this case, the criterion from Theorem~\ref{th-CS} can 
be rewritten as follows:
\be \label{cond-eig}
\mbox{$\om$ is an eigenvalue for $(\Xx_\zeta^{\vec{s}}, h_t, \wt \mu)$}\ \iff\ \|\om(\Sf_\zeta^{\sf T})^k {\vec s}\|_{\R^d/\Z^d}\to 0,\ k\to \infty.
\ee
Here $\|\cdot\|_{\R^d/\Z^d}$ denotes the distance to the nearest integer lattice point. It is reasonable to normalize the vector of heights $\vec s$, say, by requiring that 
$\vec s \in \Delta^{d-1}:= \{\vec s\in \R^d_+:\ \sum_{j=1}^d s_j =1\}$. Indeed, simply rescaling $\vec s$ results in the same dynamical system, just with rescaled
time. We see from \eqref{cond-eig} that the {\em stable manifold} 
$$
E^s := \bigl\{\xi\in \T^d:\ \lim_{k\to \infty} \|(\Sf_\zeta^{\sf T})^k\xi \|_{\R^d/\Z^d} = 0 \bigr\}
$$
for the endomorphism $\xi \mapsto \Sf_\zeta^{\sf T}$ of the torus $\T^d$ 
plays a role. To continue the discussion, let us assume that 
$|\det(\Sf_\zeta)| =1$ for simplicity, then $\Sf_\zeta^{\sf T}$ induces a toral automorphism and we don't have to worry about points with rational coordinates.
Let $q_-$ be the number of 
eigenvalues of $\Sf_\zeta$ of modulus strictly less than 1. Then $E^s$ is a $q_-$-dimensional linear subspace of $\R^d$ modulo $\Z^d$. The flow $(\Xx_\zeta^{\vec{s}}, h_t, \wt \mu)$ is not weakly mixing, i.e., has non-trivial point spectrum if and only if the line $\Span_\R(\vec s)$ intersects $E^s$ non-trivially. Thus, weak mixing holds for 
suspension flows with $\vec s\in \Delta^{d-1}$ in the complement of a set of Hausdorff dimension $q_-$. The extreme cases are: 
\begin{itemize}
\item
$q_-=d-1$; this is the irreducible Pisot case, the
system is never weakly mixing;
\item
$q_-=0$; the system is always weakly mixing.
\end{itemize}

Returning to Proposition~\ref{prop-cocy} and assuming that the $\Z$-module generated by the population vectors of {\em good} return words coincides with $\Z^d$, we
obtain from \eqref{cu3}:
$$
\bigl\|\Cc_\zeta(\om\vec s, n)\bigr\| \le C \theta^n \prod_{k=0}^{n-1} \bigl( 1 - c_1 \|\om(\Sf_\zeta^{\sf T})^k {\vec s}\|^2_{\R^d/\Z^d}\bigr).
$$
Let $B>1$. In view of this and the results of Section 4, uniform H\"older continuity of Lip-cylindrical spectral measures for $\om \in [B^{-1}, B]$ will hold, provided there exist
$\delta>0$ and $\rho>0$ such that 
$$
\liminf_{N\to \infty} \frac{1}{N} \left|\left\{k\in \{1,\ldots,N\}: \exists\,\om \in [B^{-1},B],\ \|\om(\Sf_\zeta^{\sf T})^k {\vec s}\|_{\R^d/\Z^d} \ge \rho\right\} \right|> \delta.
$$
On this route, using a version of the ``Erd\H{o}s-Kahane argument'', we obtained the following result. This is a combination of \cite[Theorem 4.2]{BuSo14} and
\cite[Theorem 1.2]{BuSo18}.

\begin{theorem}
Let $\zeta$ be a primitive aperiodic substitution on $d\ge 2$ symbols, with substitution matrix $\Sf_\zeta$. Suppose that the characteristic polynomial of $\Sf_\zeta$ is
irreducible and there are $q_+\ge 2$ eigenvalues of $\Sf_\zeta$ of modulus strictly greater than one. Then for any $\eps>0$ there exists $\gam >0$ depending
only on the substitution and on $\eps$ and an exceptional set $\Ek_\eps\subset \Delta^{d-1}$ of Hausdorff dimension at most $d-q_+ + \eps$ such that for every 
$\vec s\in \Delta^{d-1}\setminus \Ek_\eps$ there exists $C(\vec s)>0$ , such that for every Lip-cylindrical function $f$ of mean zero,
$$
\sig_f(B_r(\om)) \le C r^\gam,\ \ \mbox{for all}\ \om\in \R\ \mbox{and}\ r>0.
$$
\end{theorem}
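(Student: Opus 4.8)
The plan is to reduce the desired uniform H\"older bound to a sub-maximal growth estimate for the spectral cocycle, and then to establish that estimate for typical $\vec s$ by an Erd\H{o}s--Kahane-type argument. First I would reduce to the cocycle. By part (i) of Lemma~\ref{lem-spec2}, the bound $\sig_f(B_r(\om))\le Cr^\gam$ at scale $r=(2R)^{-1}$ follows once $G_R(f,\om)\le C'R^{1-\gam}$, and, exactly as in the proof of Theorem~\ref{thm:local1}, for $R\asymp\theta^n$ the quantity $G_R(f,\om)$ is controlled --- uniformly in the base point $(x,t)$, after the efficient decomposition of long words into substituted blocks --- by the operator norm $\|\Cc_\zeta(\om\vec s,n)\|$. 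Hence it suffices to produce $\gam>0$ and $C=C(\vec s)$ with
$$\|\Cc_\zeta(\om\vec s,n)\|\le C\,\theta^{(1-\gam/2)n},\qquad n\in\N,\ \om\in[B^{-1},B],$$
for a fixed window bounded away from $0$; the passage from this window to all $\om\in\R$ is by a scaling and covering argument, the neighborhood of the origin being disposed of using that $f$ has mean zero, so that $\vec z=(\what\psi_j(0))_j$ is orthogonal to the PF eigenvector and Corollary~\ref{cor:zero} yields $\und{d}(\sig_f,0)>0$.

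Next I would invoke Proposition~\ref{prop-cocy}. Since the characteristic polynomial of $\Sf_\zeta$ is irreducible, the $\Z$-module generated by the population vectors of good return words is a full-rank lattice, which after passing to a suitable power of $\zeta$ (harmless for the substitution space) may be taken to be $\Z^d$; then \eqref{cu3} gives
$$\|\Cc_\zeta(\om\vec s,n)\|\le C\,\theta^n\prod_{k=0}^{n-1}\bigl(1-c_1\|\om(\Sf_\zeta^{\sf T})^k\vec s\|^2_{\R^d/\Z^d}\bigr).$$
Taking logarithms and using $\log(1-x)\le -x$, the required estimate reduces to a lower bound
$$\sum_{k=0}^{n-1}\bigl\|\om(\Sf_\zeta^{\sf T})^k\vec s\bigr\|^2_{\R^d/\Z^d}\ge c\,n$$
holding uniformly for $n\in\N$ and $\om\in[B^{-1},B]$, with $c>0$. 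In words, for a positive density of indices $k$ the orbit point $\om(\Sf_\zeta^{\sf T})^k\vec s$ must stay bounded away from the integer lattice; by \eqref{cond-eig} and Theorem~\ref{th-CS}, the failure of such a lower bound is exactly the mechanism that produces eigenvalues, so the heart of the matter is to rule it out quantitatively for typical $\vec s$.

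This is where I would run the Erd\H{o}s--Kahane argument. Split $\R^d$ into the expanding, central and stable subspaces of $\Sf_\zeta^{\sf T}$; by hypothesis the expanding subspace has dimension $q_+\ge 2$, while the non-expanding part has dimension $d-q_+$. The expanding coordinates of $(\Sf_\zeta^{\sf T})^k\vec s$ grow geometrically, so whether $\om(\Sf_\zeta^{\sf T})^k\vec s$ is close to $\Z^d$ is governed by a sequence of integer ``digits'' obtained by rounding these coordinates; demanding that the orbit return $\rho$-close to $\Z^d$ for more than a $(1-\delta)$-fraction of the times up to $N$ forces this digit sequence into an exponentially sparse set. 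A covering count over the admissible digit sequences then bounds $\Dh(\Ek_\eps)$ for the set $\Ek_\eps\subset\Delta^{d-1}$ of exceptional $\vec s$ by $d-q_+$ plus a loss $\eps$ coming from the combinatorial overcounting, and a surviving $\gam>0$ is read off from $\delta$, $\rho$ and $\log\theta$. The hypothesis $q_+\ge 2$ is essential here: it makes the expanding action genuinely multidimensional, so that the count produces an exceptional set of dimension strictly below $d-1$ and a strictly positive exponent, whereas $q_+=1$ is the irreducible Pisot regime in which the flow is never weakly mixing (Theorem~\ref{th-Pisot}) and no H\"older bound can hold.

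The main obstacle is precisely this last step: carrying out the coding so that it is \emph{uniform in} $\om$ over the window --- the bad index sets depend on $\om$, and one must union over $\om$ without destroying the density gain --- while simultaneously extracting the \emph{sharp} dimension $d-q_+$ and an effective $\gam$. A secondary technical point is the reduction from the compact frequency window to all of $\R$ with a single constant $C(\vec s)$, which relies on the mean-zero normalization to handle frequencies near the origin and on the growth structure of the cocycle to handle large $\om$.
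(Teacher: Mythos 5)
Your proposal follows essentially the same route as the paper: reduction via the spectral Lemma~\ref{lem-spec2} and the scheme of Theorem~\ref{thm:local1} to a sub-maximal norm bound for the spectral cocycle, then Proposition~\ref{prop-cocy} (with irreducibility supplying a full-rank lattice of return-word population vectors) to convert this into a positive-density lower bound on $\|\om(\Sf_\zeta^{\sf T})^k\vec s\|_{\R^d/\Z^d}$, and finally an Erd\H{o}s--Kahane counting argument over the $q_+$-dimensional expanding directions to bound the exceptional set by $d-q_++\eps$ and extract $\gam>0$. This is exactly the argument the paper outlines (deferring details to \cite{BuSo14} and \cite{BuSo18}), and your identification of the genuine difficulties --- uniformity in $\om$ of the bad-index sets and the mean-zero treatment of frequencies near the origin --- matches where the cited papers do the real work.
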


Observe that the estimate of the dimension of the exceptional set is sharp in the case when $\Sf_\zeta$ has no eigenvalues on the unit circle, in view of Theorem~\ref{th-CS}.



\end{document}